\newcommand{\R}{\mathbb{R}}
\newcommand{\N}{\mathbb{N}}
\renewcommand{\d}{\mathrm{d}}
\renewcommand{\div}{\mathrm{div}}
\newcommand{\hd}{\mathcal{H}}
\newcommand{\restr}{{\mbox{\LARGE$\llcorner$}}}
\newcommand{\meas}{{\mathcal M}}
\newcommand{\prob}{{\mathcal P}}
\newcommand{\argmin}{{\mathrm{argmin}}}
\newcommand{\BV}{\mathrm{BV}}
\newtheorem{theorem}{Theorem}
\newtheorem{lemma}[theorem]{Lemma}
\newtheorem{definition}[theorem]{Definition}
\newtheorem{example}[theorem]{Example}
\newtheorem{remark}[theorem]{Remark}
\newtheorem{problem}[theorem]{Problem}
\title{Phase field models for two-dimensional branched transportation problems}
\author{Benedikt Wirth}
\date{}
\begin{document}

\maketitle

\begin{abstract}
We analyse the following inverse problem.
Given a nonconvex functional (from a specific, but quite general class) of normal, codimension-1 currents (which in two spatial dimensions can be interpreted as transportation networks),
find the potential of a phase field energy which approximates the given functional.
We prove existence of a solution as well as its characterization via a linear deconvolution problem.
We also provide an explicit formula that allows to approximate the solution arbitrarily well in the supremum norm.
\end{abstract}

\section{Introduction}
% two-sentence summary and contributions (existence, reduce problem to linear deconvolution problem, explicitly solvable for cases of interest and numerically solvable e.g. if $g$ is zero for large enough arguments, i.e. $\tau$ starts linear; explicit solution for piecewise linear $\tau$, which can be used as approximation)
%
% mention that the phase field energy also can be used for concave functions of the jump part of the gradient of an SBV function, thus yielding MS energies

We consider the following inverse problem.

\begin{problem}[Inverse problem for phase field cost]\label{pbm:inverseProblem}
Given a continuous, nondecreasing, concave function $\tau:[0,\infty)\to[0,\infty)$ with $\tau(0)=0$,
find a function $c:[0,\infty)\to[0,\infty)$ such that for all $w\geq0$ we have
\begin{equation}\label{eqn:inverseProblem}
\tau(w)=\inf\left\{F^c[\psi]\,\middle|\,\int_\R\psi\,\d y=w\right\}
\quad\text{for }
F^c[\psi]=\int_\R\frac12|\psi'(y)|^2+c\left(|\psi(y)|\right)\,\d y\,.
\end{equation}
\end{problem}

As will be explained further below in the introduction, the sought $c$ plays the same role as the double-well potential in the well-known Modica--Mortola phase field functional.
In more detail, we seek the potential $c$ of a phase field functional which shall approximate a given cost function of two-dimensional transportation networks
(in which $\tau(w)$ represents the cost per unit length of a network edge carrying a flux $w$).
The interest in such so-called branched transport functionals grew tremendously during the past years (an introduction can be found in \cite[\S4.4.2]{Sa15}),
and phase field approximations represent a promising method to numerically compute optimal transport networks.
Two particular instances of branched transport functionals have been approximated via phase fields in \cite{OuSa11,ChaFerMer16},
while in the current article we present phase field functionals for a much larger class of branched transportation functionals.
In fact, any nonnegative lower semi-continuous integral functional of normal codimension-1 currents $\sigma$ can be written as $\int\tau(\sigma)$
for a subadditive $\tau$ (and the functions $\tau$ considered in \cref{pbm:inverseProblem} form a large important subclass),
so the results of this article are not only relevant for branched transportation in two dimensions,
but also for the approximation of nonconvex integral functionals of BV-functions
(such as variants of the well-known Mumford--Shah energy, used in image processing or fracture mechanics, in which the jump part of the BV-function gradient is penalized by some $\tau$).
Nevertheless, in our exposition we chose to motivate \cref{pbm:inverseProblem} via branched transportation models.

Throughout this article we will call $\tau:[0,\infty)\to[0,\infty)$ a \emph{transport cost} and $c:[0,\infty)\to[0,\infty)$ a \emph{phase field cost};
we will furthermore make use of the \emph{mass-specific phase field cost} $z:\phi\mapsto c(\phi)/\phi$.
We will call a transport cost $\tau$ \emph{admissible} if it satisfies the properties listed in \cref{pbm:inverseProblem}.

\begin{definition}[Admissible tansport cost]
A transport cost $\tau:[0,\infty)\to[0,\infty)$ is \emph{admissible} if it is nondecreasing, concave, and continuous with $\tau(0)=0$.
\end{definition}

The article presents a comprehensive analysis of inverse \cref{pbm:inverseProblem}, including the following main results.
\begin{itemize}
\item
\Cref{pbm:inverseProblem} has a (not necessarily unique) solution $c$ (\cref{thm:existence}).
We furthermore show structural properties of particular solutions,
most importantly that the corresponding mass-specific phase field cost $z$ is lower semi-continuous and nonincreasing.
\item
Any transport cost $\tau$ that is induced via \eqref{eqn:inverseProblem} by a Borel measurable phase field cost $c:[0,\infty)\to[0,\infty)$
is admissible (\cref{thm:tauProperties}).
Thus, the restriction of \cref{pbm:inverseProblem} to admissible transport costs $\tau$ is natural
(note that all transport costs $\tau$ of interest are nondecreasing subadditive functions with $\tau(0)=0$,
of which admissible ones form a large important subclass).
\item
For the examples of piecewise affine $\tau$ and of degree $\alpha$-homogeneous $\tau$, $\alpha\in(0,1)$,
explicit formulae for a solution $c$ to \cref{pbm:inverseProblem} are given (\cref{exm:branchedTransport,thm:genUrbPln}).
Obviously the formula for piecewise affine $\tau$ lends itself for approximating arbitrary concave transport costs $\tau$ (for instance via linear interpolation).
\item
\Cref{pbm:inverseProblem} can be transformed into an (almost) equivalent linear deconvolution problem in the following sense.
Let $[-\tau(-\cdot)]^\ast$ denote the Legendre--Fenchel conjugate of $-\tau(-\cdot)$ (where we extended $\tau$ to $(-\infty,0)$ by $-\infty$) and introduce the function
\begin{equation}\label{eqn:convFunction}
r(s)=\begin{cases}\frac{2\sqrt2}{3\sqrt{-s}}&\text{if }s<0,\\0&\text{else.}\end{cases}
\end{equation}
Furthermore, introduce the nonlinear transformation
\begin{equation}\label{eqn:nonlinearTransformation}
g=(z^{-1}(\cdot))^{3/2}
\end{equation}
of the mass-specific phase field cost $z$
(from which $z$ can be recovered as $z=g^{-1}((\cdot)^{2/3})$ and in which the inversion is meant in a generalized sense, see \cref{thm:generalizedInverse}).
Then, if the phase field cost $c$ solves \cref{pbm:inverseProblem}, we have
\begin{equation}\label{eqn:convolutionProblem}
[-\tau(-\cdot)]^\ast(t)=[g*r](t)
\end{equation}
for all $t$ from a certain subset of $[0,\infty)$ (necessary condition, \cref{thm:necessaryCondition}).
Conversely, if \eqref{eqn:convolutionProblem} holds for all $t\geq0$, then $c$ solves \cref{pbm:inverseProblem} (sufficient condition, \cref{thm:sufficientCondition}).
\end{itemize}

The remainder of the introduction describes in more detail the motivation of \cref{pbm:inverseProblem} via branched transport phase field functionals
and briefly derives the equivalent linear deconvolution problem \eqref{eqn:convolutionProblem} via a formal argument.
\Cref{sec:examples} provides examples of transport costs $\tau$ and corresponding phase field costs $c$,
some of which are derived as applications of the linear deconvolution problem \eqref{eqn:convolutionProblem}.
Existence of a solution to \cref{pbm:inverseProblem} and its properties are derived in \cref{sec:existence},
while \cref{sec:obtainableCosts} characterizes the class of phase field costs $\tau$ obtainable by \eqref{eqn:inverseProblem}.
Finally, \cref{sec:existencePhaseField} discusses properties of the minimizers $\psi$ inside \eqref{eqn:inverseProblem},
which is used in \cref{sec:deconvolution} to rigorously derive the linear deconvolution problem \eqref{eqn:convolutionProblem}.

\paragraph{Branched transportation.}
Classical optimal transport is concerned with finding the most cost-efficient way of transporting mass from a given initial mass distribution $\mu_0$ (represented as a probability measure in $\R^n$) to a given final mass distribution $\mu_1$.
Branched transportation is a variant of optimal transport in which the corresonding transportation cost favours transport in bulk \cite[\S4.4.2]{Sa15}.
This automatically leads to the creation of hierarchical transportation networks
in which mass from $\mu_0$ is gradually collected on the finer network branches, is then transported efficiently in bulk along big branches, and is finally distributed towards $\mu_1$ again on finer network branches (see \cref{fig:branchedTransport}).

Mathematically the problem is formulated as follows (see for instance \cite[Prop.\,2.32]{BrWi17}).
Let $\Omega\subset\R^d$ be a closed Lipschitz domain, and denote by $\meas(\Omega;\R^d)$ the set of $\R^d$-valued Radon measures on $\Omega$ and by $\prob(\Omega)$ the set of probability measures on $\Omega$.
Given $\mu_0,\mu_1\in\prob(\Omega)$, the mass flux from $\mu_0$ to $\mu_1$ is described by some $\sigma\in\meas(\Omega;\R^d)$ satisfying
\begin{equation}\label{eqn:divergenceConstraint}
\div\sigma=\mu_0-\mu_1
\end{equation}
in the distributional sense.
It is known that such a mass flux can be decomposed into a rectifiable and a diffuse part,
\begin{equation*}
\sigma=w\theta\hd^1\restr S+\sigma^\perp\,,
\end{equation*}
where $S\subset\Omega$ is a countably $1$-rectifiable set, $\hd^1$ denotes the one-dimensional Hausdorff measure, $w:S\to[0,\infty)$ is the locally transported mass, $\theta:S\to\R^d$ is the approximate tangent to $S$,
and $\sigma^\perp$ consists of a Lebesgue-continuous and a Cantor part.
The cost associated with mass flux $\sigma$ is then given by
\begin{equation*}
E^\tau[\sigma]=\int_\Omega\tau(\sigma)=\int_S\tau(w)\,\d\hd^1+\tau'(0)|\sigma^\perp|(\Omega)\,,
\end{equation*}
where $\tau:[0,\infty)\to[0,\infty)$ is nondecreasing and concave with $\tau(0)=0$ ($\tau'(0)\in[0,\infty]$ denotes the right derivative of $\tau$ in $0$, and $|\cdot|$ stands for the total variation measure).
The function $\tau(w)$ represents the cost for transporting mass $w$ by one unit distance.
The cost $E^\tau$ is now minimized over all mass fluxes from $\mu_0$ to $\mu_1$ to obtain the optimal mass flux and the minimal branched transport cost.

\begin{figure}
\setlength\unitlength\linewidth
\includegraphics[width=\unitlength]{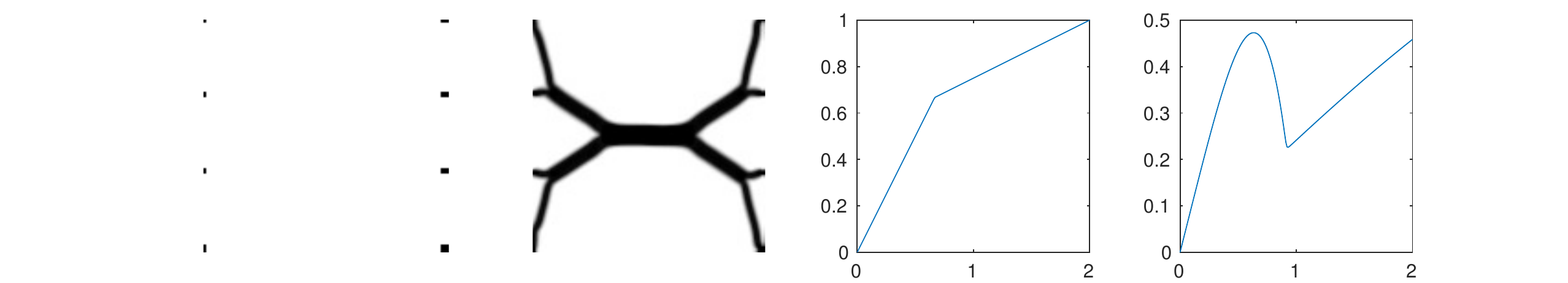}%
\begin{picture}(0,0)(1,0)
\put(.135,.015){$+$}
\put(.135,.065){$+$}
\put(.135,.113){$+$}
\put(.135,.16){$+$}
\put(.26,.015){$-$}
\put(.26,.065){$-$}
\put(.26,.113){$-$}
\put(.26,.16){$-$}
\put(.61,-.015){$w$}
\put(.5,.066){\rotatebox{90}{$\tau(w)$}}
\put(.82,-.015){$\phi$}
\put(.71,.066){\rotatebox{90}{$c(\phi)$}}
\end{picture}%
\caption{Optimal transportation network, numerically computed via the phase field approximation suggested in this article.
>From left to right: initial and final mass distribution $\mu_0$ (+) and $\mu_1$ (--), phase field approximation $\tilde\sigma$ of the optimal flux $\sigma$ (computed by minimizing \eqref{eqn:phaseFieldEnergy} subject to \eqref{eqn:divergenceConstraint} using a finite difference discretization in which $\tilde\sigma$ is represented as $\nabla u^\perp$ for some scalar function $u$; only the flux magnitude $|\tilde\sigma|$ is shown),
the employed transport cost $\tau$, and the corresponding phase field cost $c$ (which will be derived in \cref{exm:urbanPlanning}).
Note that rather than being concentrated along one-dimensional lines, the flux is a little diffused over a width $\varepsilon$ due to the phase field approximation.}
\label{fig:branchedTransport}
\end{figure}

\paragraph{Phase field approximation.}
One approach to numerically find optimal transportation networks (that is, minimizers of $E^\tau$ subject to \eqref{eqn:divergenceConstraint})
consists in approximating $E^\tau$ by a smooth phase field functional $E^\tau_\varepsilon$ that is easier to minimize
(phase field models for special cases of branched transportation are proposed in \cite{OuSa11,FeRoWi18}).
In such an approach the flux $\sigma\in\meas(\Omega;\R^d)$ is replaced by a smoothed version $\tilde\sigma:\Omega\to\R^d$, called a phase field, where the degree of smoothing is determined by a small parameter $\varepsilon>0$ (see \cref{fig:branchedTransport}).
In the limit $\varepsilon\to0$ one aims to recover the original optimization problem in the sense that $E^\tau_\varepsilon$ $\Gamma$-converges to $E^\tau$.

In two spatial dimensions, a possible ansatz for the phase field functional $E^\tau_\varepsilon$, also followed in \cite{OuSa11}, is given by
\begin{equation}\label{eqn:phaseFieldEnergy}
E^\tau_\varepsilon[\tilde\sigma]
=\int_\Omega\frac{\varepsilon^\alpha}2|\nabla\tilde\sigma(x)|^2+\frac1{\varepsilon^\beta}c\left(\varepsilon^\gamma|\tilde\sigma(x)|\right)\,\d x
\end{equation}
for suitable exponents $\alpha,\beta,\gamma\in\R$ and a suitable phase field cost $c:[0,\infty)\to[0,\infty)$ with $c(0)=0$.
The aim of this article is to identify, for given $\tau$, the phase field cost $c$ such that the phase field functional indeed approximates $E^\tau$.
The essential idea behind such a phase field model is that along a single network branch the mass flux $\sigma$ and also its phase field approximation $\tilde\sigma$ are constant,
thereby reducing the problem dimension by one.
In more detail, consider a flux $\sigma$ moving mass $w\geq0$ upwards along the vertical axis,
\begin{equation*}
\sigma=w{0\choose1}\hd^1\restr\{0\}\times\R
\end{equation*}
(this flux should be thought of as a single branch of the mass flux obtained from minimizing $E^\tau$ subject to \eqref{eqn:divergenceConstraint};
indeed, if one zooms in far enough on such a branch it appears arbitrarily long, and without loss of generality we can choose coordinates such that the branch points upwards;
the same reasoning will be applied to all branches).
The corresponding phase field function $\tilde\sigma$ (the minimizer of $E^\tau_\varepsilon$) will then also be constant along the vertical direction and will be of the form
\begin{equation*}
\tilde\sigma(x_1,x_2)=\tilde w(x_1){0\choose1}
\end{equation*}
for some function $\tilde w:\R\to[0,\infty)$.
Since $\tilde\sigma$ is just a diffused version of $\sigma$, the magnitude of the vertical mass flux encoded by $\tilde\sigma$ must equal $w$,
and the phase field energy per unit interval (without loss of generality along $x_2\in[0,1]$) must equal the transportation cost $\tau(w)$,
\begin{align*}
w
&=\int_\R{0\choose1}\cdot\tilde\sigma\,\d x_1
=\int_\R\tilde w(x_1)\,\d x_1\,,\\
\tau(w)
&=\int_0^1\int_\R\frac{\varepsilon^\alpha}2|\nabla\tilde\sigma(x)|^2+\frac1{\varepsilon^\beta}c\left(\varepsilon^\gamma|\tilde\sigma(x)|\right)\,\d x_1\,\d x_2
=\int_\R\frac{\varepsilon^\alpha}2|\tilde w'(x_1)|^2+\frac1{\varepsilon^\beta}c\left(\varepsilon^\gamma|\tilde w(x_1)|\right)\,\d x_1\,.
\end{align*}
Introducing rescaled variables according to $y=x_1/\varepsilon^\beta$ and $\psi(y)=\tilde w(y\varepsilon^\beta)\varepsilon^\beta$ we obtain
\begin{align*}
w
&=\int_\R\psi(y)\,\d y\,,\\
\tau(w)
&=\int_\R\frac{\varepsilon^{\alpha-3\beta}}2|\psi'(y)|^2+c\left(\varepsilon^{\gamma-\beta}|\psi(y)|\right)\,\d y
=\int_\R\frac12|\psi'(y)|^2+c\left(|\psi(y)|\right)\,\d y
\end{align*}
for the choice $\alpha=3\beta$ and $\beta=\gamma$; thus without loss of generality we shall from now on consider $\alpha=3$ and $\beta=\gamma=1$.
Apparently, for given mass flux magnitude $w$ the corresponding phase field attains the $\varepsilon$-independent profile $\psi$, scaled in height and width by $\frac1\varepsilon$ and $\varepsilon$, respectively.
Thus, the width of the diffused mass flux $\tilde\sigma$ concentrates more and more as $\tilde\sigma$ approaches $\sigma$ for $\varepsilon\to0$.

The optimal phase field $\tilde\sigma$ minimizes the phase field functional among all phase field functions carrying flux $w$ vertically upwards.
Thus we can summarize the above reasoning as
\begin{equation*}%\label{eqn:inverseProblem}
\tau(w)=\inf\left\{F^c[\psi]\,\middle|\,\int_\R\psi\,\d y=w\right\}
\quad\text{for }
F^c[\psi]=\int_\R\frac12|\psi'(y)|^2+c\left(|\psi(y)|\right)\,\d y\,.
\end{equation*}
If $c$ is chosen such that the above holds, $E^\tau_\varepsilon$ will be a valid phase field approximation of $E^\tau$,
thereby justifying inverse \cref{pbm:inverseProblem}.

\paragraph{Inverse problem for $c$.}
The aim of this article is to solve the above-described inverse \cref{pbm:inverseProblem} of finding a phase field cost $c:[0,\infty)\to[0,\infty)$ such that \eqref{eqn:inverseProblem} holds.
We here briefly show a heuristic argument how it relates to the linear deconvolution problem \eqref{eqn:convolutionProblem}.
This argument illustrates the basic intuition behind our approach which will be made rigorous in the subsequent sections.

It is straightforward to see that the optimal $\psi$ in \eqref{eqn:inverseProblem} is nonnegative.
Thus its optimality conditions read
\begin{align*}
0&=-\psi''+c'(\psi)+\lambda\,,\\
0&=\int_\R\psi\,\d y-w
\end{align*}
for some Lagrange multiplier $\lambda\in\R$.
Let $\psi_w:\R\to\R$ be the corresponding solution.
By differentiating $\tau(w)=F^c[\psi_w]$ with respect to $w$ we obtain
\begin{multline*}
\tau'(w)
=\partial_{\psi}F^c[\psi_w](\partial_w\psi_w)
=\int_\R\psi_w'\cdot\partial_w\psi_w'+c'(\psi)\partial_w\psi_w\,\d y\\
=\int_\R(-\psi_w''+c'(\psi))\partial_w\psi_w\,\d y
=-\lambda\int_\R\partial_w\psi_w\,\d y
=-\lambda\partial_w\int_\R\psi_w\,\d y
=-\lambda\,,
\end{multline*}
where we used an integration by parts as well as the optimality conditions.
Thus, $\psi_w$ actually solves the ordinary differential equation
\begin{equation}\label{eqn:optimalityCondition}
0=-\psi''+c'(\psi)-\tau'(w)\,.
\end{equation}
As a sideremark, note that this provides a different formulation of the inverse problem more akin to classical nonlinear parameter estimation in elliptic partial differential equations.
Indeed, introducing the forward operator $T:\psi\mapsto\int_\R\psi\,\d y$, we
\begin{gather*}
\text{seek the nonlinearity $c'$}\\
\text{such that the solution $\psi^r$ of the nonlinear elliptic equation $0=-\psi''+c'(\psi)-r$}\\
\text{satisfies $T(\psi^r)=(\tau')^{-1}(r)$ for all $r\in[0,\infty)$.}
\end{gather*}
In other words, for different spatially constant right-hand sides of the elliptic equation we measure the accumulated mass of the solution and have to find the nonlinearity such that the measurement fits to the given data.
Next we apply the Modica--Mortola trick which is standard in phase field methods.
Testing the optimality condition \eqref{eqn:optimalityCondition} with $2\psi_w'$ we obtain $0=\left(-|\psi_w'|^2+2(c(\psi_w)-\tau'(w)\psi_w)\right)'$.
Together with $\psi_w(y),\psi_w'(y)\to0$ as $|y|\to\infty$ and $c(0)=0$ this implies
\begin{equation*}
|\psi_w'|^2=2(z(\psi_w)-\tau'(w))\psi_w
\quad\text{for }
z(\phi)=\frac{c(\phi)}\phi\,.
\end{equation*}
In particular, letting without loss of generality $\psi_w$ achieve its maximum in $y=0$ so that $\psi_w'(0)=0$, we have
\begin{equation*}
0=2(z(\psi_w(0))-\tau'(w))\psi_w(0)\,.
\end{equation*}
By Young's inequality, $\frac{a^2}2+\frac{b^2}2\geq ab$ for all $a,b\in\R$ with equality if and only if $|a|=|b|$, we now obtain
\begin{align*}
\tau(w)-\tau'(w)w
&=F^c[\psi_w]-\tau'(w)\int_\R\psi_w\,\d y\\
&=\int_{-\infty}^0\frac{|\psi_w'|^2}2+\frac{2(z(\psi_w)-\tau'(w))\psi_w}2\,\d y+\int_0^{\infty}\frac{|\psi_w'|^2}2+\frac{2(z(\psi_w)-\tau'(w))\psi_w}2\,\d y\\
&=\int_{-\infty}^0|\psi_w'|\sqrt{2(z(\psi_w)-\tau'(w))\psi_w}\,\d y+\int_0^\infty|\psi_w'|\sqrt{2(z(\psi_w)-\tau'(w))\psi_w}\,\d y\\
&=2\int_0^{\psi_w(0)}\sqrt{2(z(\phi)-\tau'(w))\phi}\,\d\phi\,,
\end{align*}
where we performed the change of variables $\phi=\psi_w(y)$ and assumed that $\psi_w'$ does not change sign on $(-\infty,0)$ and $(0,\infty)$.
Noting $\psi_w(0)=z^{-1}(\tau'(w))$ and performing an integration by parts, we thus have
\begin{align*}
\tau(w)-\tau'(w)w
&=2\int_0^{z^{-1}(\tau'(w))}\sqrt{2(z(\phi)-\tau'(w))\phi}\,\d\phi\\
&=-\frac43\int_0^{z^{-1}(\tau'(w))}\phi^{\frac32}\frac{z'(\phi)}{\sqrt{2(z(\phi)-\tau'(w))}}\,\d\phi\\
&=-\frac43\int_{z(0)}^{\tau'(w)}z^{-1}(s)^{\frac32}\frac{1}{\sqrt{2(s-\tau'(w))}}\,\d s\,,
\end{align*}
where we have performed yet another change of variables $s=z(\phi)$.
Assuming now $z(0)=\infty$ and substituting $w=(\tau')^{-1}(t)$ we finally obtain
\begin{equation*}
[-\tau(-\cdot)]^\ast(t)
=\tau((\tau')^{-1}(t))-(\tau')^{-1}(t)t
=-\frac43\int_{\infty}^{t}\frac{z^{-1}(s)^{\frac32}}{\sqrt{2(s-t)}}\,\d s
=\int_t^\infty g(s)r(t-s)\,\d s
=\left[g*r\right](t)
\end{equation*}
for $r$ and $g$ as defined in \eqref{eqn:convFunction} and \eqref{eqn:nonlinearTransformation},
where $(\cdot)^\ast$ denotes the Legendre--Fenchel conjugate and $*$ convolution of two functions.
This is the linear deconvolution problem already mentioned in \eqref{eqn:convolutionProblem}.
Hence, given $\tau$ or equivalently $[-\tau(-\cdot)]^\ast$, we can first solve \eqref{eqn:convolutionProblem} for $g$
and then obtain
\begin{equation}\label{eqn:cFromG}
c(\psi)=\psi g^{-1}(\psi^{3/2})\,.
\end{equation}
This argument will be made rigorous in \cref{sec:deconvolution}.

\section{Examples and piecewise linear approximation}\label{sec:examples}
We begin by illustrating the use of our characterization of the inverse problem as a linear deconvolution problem by a few examples.
The examples will also later aid to prove existence of a solution to \cref{pbm:inverseProblem}, and they will furthermore illustrate the potential nonuniqueness of this solution.
%That the function $c$ derived in these examples is indeed correct will be justified in retrospect at the end of the article.

\begin{example}[Classical branched transport]\label{exm:branchedTransport}
The standard model of branched transport has a parameter $\alpha\in(0,1)$ and uses the cost
\begin{equation*}
\tau(w)=w^\alpha
\qquad\text{ so that }\quad
[-\tau(-\cdot)]^*(t)=(1-\alpha)\left(\frac t{\alpha}\right)^{\frac\alpha{\alpha-1}}\,.
\end{equation*}
In that case, \eqref{eqn:convolutionProblem} is solved by
\begin{align*}
g(s)=Ks^{-\beta}
\quad&\text{for }
\beta=\frac12+\frac\alpha{1-\alpha}\\
&\text{and }
K=\frac{1-\alpha}\alpha(\alpha)^{\frac1{1-\alpha}}\frac{2\beta-1}{2\sqrt{2\pi}}\frac{3\Gamma(\beta)}{2\Gamma(\beta+\frac12)}
=(\alpha)^{\frac1{1-\alpha}}\frac{1}{\sqrt{2\pi}}\frac{3\Gamma(\frac12+\frac\alpha{1-\alpha})}{2\Gamma(1+\frac\alpha{1-\alpha})}
\end{align*}
(indeed, this follows for instance from noting $\int_t^\infty s^{-\beta}/\sqrt{s-t}\,\d s=\frac2{2\beta-1}\frac{\Gamma(\beta+1/2)}{\Gamma(\beta)}\sqrt\pi t^{1/2-\beta}$).
Thus, by \eqref{eqn:nonlinearTransformation} we have
\begin{equation*}
c(\phi)
=\phi\left[\frac1{K}\phi^{3/2}\right]^{-2\frac{1-\alpha}{1+\alpha}}
=K^{2\frac{1-\alpha}{1+\alpha}}\phi^{2\frac{2\alpha-1}{1+\alpha}}\,.
\end{equation*}
This is exactly the model derived in \cite{OuSa11}.
A corresponding illustration is given in \cref{fig:examples}.
% \todo{Using the Laplace transform $\mathcal L$, one gets
% \begin{gather*}
% \hat r(\xi)
% =\int_\R r(t)e^{-2\pi i\xi t}\,\d t
% =\int_0^\infty\sqrt{\tfrac2t}e^{2\pi i\xi t}\,\d t
% =2^{3/2}\mathcal L(\tfrac1{2\sqrt t})(-2\pi i\xi)
% =2^{3/2}\Gamma(3/2)/\sqrt{-2\pi i\xi}\,,\\
% \widehat{(\cdot)^{-\beta}}(\xi)
% =\int_0^\infty t^{-\beta}e^{-2\pi i\xi t}\,\d t
% =\mathcal L((\cdot)^{-\beta})(2\pi i\xi)
% =\Gamma(1-\beta)(2\pi i\xi)^{\beta-1}\,,
% \end{gather*}
% however, this does not allow to have $\hat r\widehat{(\cdot)^{-\beta}}=\text{const}\widehat{(\cdot)^{-\gamma}}$.
% Can one fix this so that at least formally one finds $g$ via the Fourier transform?%
% }%\todo
\end{example}

\begin{figure}
\setlength\unitlength{.9\linewidth}
\hspace*{-.1\unitlength}\includegraphics[width=\unitlength]{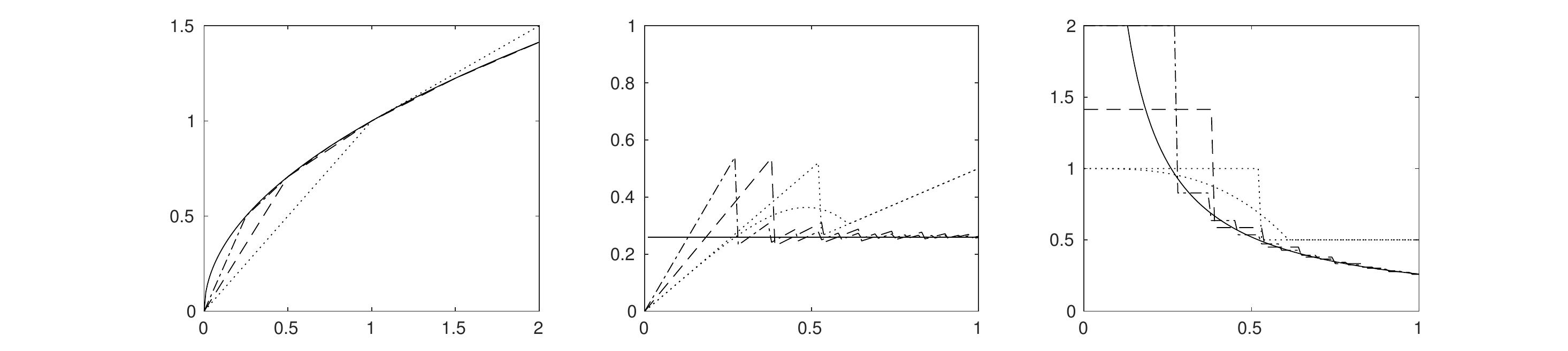}%obtained using piecewiseLinearTau.m
\begin{picture}(0,0)(1,0)
\put(.23,-.015){\small$w$}
\put(.51,-.015){\small$\phi$}
\put(.79,-.015){\small$\phi$}
\put(.09,.09){\rotatebox{90}{\small$\tau(w)$}}
\put(.37,.09){\rotatebox{90}{\small$c(\phi)$}}
\put(.65,.09){\rotatebox{90}{\small$z(\phi)$}}
\put(.92,.18){\small\makebox[5ex][l]{---} $\tau(w)=\sqrt{w}$}
\put(.92,.15){\small\makebox[5ex][l]{$\cdots$} $\tau(w)=\min\{w,(1+w)/2\}$}
\put(.92,.12){\small\makebox[5ex][l]{$-\,-$} $\tau(w)=I_{\frac12}[\sqrt\cdot](w)$}
\put(.92,.09){\small\makebox[5ex][l]{$-\cdot-$} $\tau(w)=I_{\frac14}[\sqrt\cdot](w)$}
\end{picture}%
\caption{Different transport costs $\tau$ and corresponding phase field costs $c$ as well as mass-specific phase field costs $z$ from \crefrange{exm:branchedTransport}{exm:urbanPlanningII} and \cref{thm:genUrbPln}.
$I_h[f]$ denotes the piecewise linear interpolation of a function $f$ with grid size $h$.
Note that for $\tau(w)=\min\{w,(1+w)/2\}$ two different corresponding phase field costs $c$ are shown.}
\label{fig:examples}
\end{figure}

\begin{example}[Urban planning I]\label{exm:urbanPlanning}
Another branched transportation problem from the literature, so-called urban planning \cite{BuPrSoSt09,BrWi15-equivalent}, is given by 
\begin{equation*}
\tau(w)=\min(aw,bw+d)
\qquad\text{with }
[-\tau(-\cdot)]^*(t)=\begin{cases}
\infty&\text{if }t<b,\\
\frac d{a-b}(a-t)&\text{if }t\in[b,a),\\
0&\text{else}
\end{cases}
\end{equation*}
for parameters $a>b>0$, $d>0$.
In that case, \eqref{eqn:convolutionProblem} is solved by
\begin{equation*}
g(s)=\begin{cases}
\infty&\text{if }s<b,\\
\frac3{\sqrt2\pi}\frac d{a-b}\sqrt{a-s}&\text{if }s\in[b,a),\\
0&\text{else}
\end{cases}
\end{equation*}
(indeed, this follows for instance from noting $\int_t^a\sqrt{\frac{a-s}{s-t}}\,\d s=\frac\pi2(a-t)$).
Thus, by \eqref{eqn:nonlinearTransformation} (using the generalized inverse $g^{-1}(\phi)=\inf\{x\in[0,\infty)\,|\,z(x)\leq\phi\}$ of $g$, see \cref{thm:generalizedInverse} later) we have
\begin{equation*}
c(\phi)=\phi\left.\begin{cases}
a-\left(\frac{\sqrt2\pi}3\frac{a-b}d\phi^{\frac32}\right)^2&\text{if }\phi^{\frac32}\leq\frac3{\sqrt2\pi}\frac d{a-b}\sqrt{a-b},\\
b&\text{else}
\end{cases}\right\}
=\max\left\{a\phi-\tfrac{2\pi^2}9(\tfrac{d}{a-b})^2\phi^4,b\phi\right\}\,.
\end{equation*}
\end{example}

\begin{example}[Urban planning II]\label{exm:urbanPlanningII}
\Cref{thm:genUrbPln} will show that for the same $\tau$ as in \cref{exm:urbanPlanning} one can also use a simpler, piecewise constant $c$ inducing the same urban planning cost $\tau$,
\begin{equation*}
c(\phi)=\begin{cases}
a\phi&\text{if }\phi<P,\\
b\phi&\text{else}
\end{cases}
\quad\text{with }
P=\frac12\sqrt[3]{\frac{9d^2}{4(a-b)}}\,.
\end{equation*}
Note that the function $g$ corresponding to $c$ via \eqref{eqn:cFromG} is given by
\begin{equation*}
g(s)=\begin{cases}
\infty&\text{if }s<b,\\
P^{3/2}&\text{if }s\in[b,a),\\
0&\text{else,}
\end{cases}
\end{equation*}
which only satisfies \eqref{eqn:convolutionProblem} for $t=a$ and $t=b$.
This is in accordance with the previously mentioned fact that \eqref{eqn:convolutionProblem} is only a sufficient, but not a necessary condition (which will be proved in \cref{thm:sufficientCondition}).
In fact, by the necessary condition that will be provided in \cref{thm:necessaryCondition}, \eqref{eqn:convolutionProblem} has to hold only for $t\in\{\tau'(w)\,|\,w>0,\,\tau\text{ is differentiable in }w\}$,
which for urban planning amounts to $t\in\{a,b\}$.
\end{example}

The final example in this section is a generalization of urban planning, a piecewise affine transport cost $\tau$, which will serve two purposes.
First, any admissible transportation cost $\tau$ can be discretized with arbitrarily small error (in the supremum norm) by a piecewise linear interpolation $\tilde\tau$
so that the example provides a way to approximate any $\tau$ using a simple, explicit phase field cost belonging to a piecewise linear $\tilde\tau$ (\cref{fig:examples} shows an example).
Second, we will prove existence of a solution to \cref{pbm:inverseProblem} by approximation via piecewise linear models.
% Since the characterization \eqref{eqn:convolutionProblem} assumes existence of the phase field cost $c$ a priori,
% we cannot use it to derive the phase field cost corresponding to the generalization of urban planning.
% Therefore we present it as a separate theorem.
% Instead of working with the phase field cost $c$, below we will use the mass-specific phase field cost
Before the example, which will be presented in \cref{thm:genUrbPln}, we recall the notion of the mass-specific phase field cost
\begin{equation*}
z:[0,\infty)\to[0,\infty),\quad
z(\phi)=\frac{c(\phi)}\phi\,,
\end{equation*}
($z(0)$ can be defined as the right limit in $0$ using l'H\^opital's rule)
and state an auxiliary lemma which will help to simplify notation in several places.

\begin{lemma}[Phase field properties]\label{thm:phaseFieldProperties}
Given a phase field cost $c:[0,\infty)\to\R$ and a phase field function $\psi:\R\to[0,\infty)$ with finite phase field energy $F^c[\psi]<\infty$,
denote its symmetric decreasing rearrangement by $\tilde\psi$ (obviously, $\tilde\psi$ is an even function, monotonically decreasing on $[0,\infty)$).
Then $\tilde\psi$ is bounded and continuous with $F^c[\tilde\psi]\leq F^c[\psi]$.
\end{lemma}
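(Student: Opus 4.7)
The plan is to split $F^c$ into the gradient term $\tfrac12\int|\psi'|^2\,dy$ and the potential term $\int c(\psi)\,dy$ and apply the two classical rearrangement inequalities separately. By construction $\tilde\psi$ shares its distribution function with $\psi$, so for any Borel $c$ with $c(\psi)$ integrable (in particular any $c\geq 0$, as in \cref{pbm:inverseProblem}) the layer-cake formula gives equimeasurability, $\int_\R c(\tilde\psi)\,dy=\int_\R c(\psi)\,dy$. The one-dimensional P\'olya--Szeg\H o inequality then yields $\int_\R|\tilde\psi'|^2\,dy\leq\int_\R|\psi'|^2\,dy$, and summing the two estimates produces $F^c[\tilde\psi]\leq F^c[\psi]$.

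Boundedness and continuity of $\tilde\psi$ then fall out of the monotonicity of $\tilde\psi$ on $[0,\infty)$ together with $\tilde\psi'\in L^2(\R)$. Cauchy--Schwarz gives the uniform 1/2-H\"older bound
\begin{equation*}
|\tilde\psi(y_1)-\tilde\psi(y_2)|\leq\|\tilde\psi'\|_{L^2}\sqrt{|y_1-y_2|}
\end{equation*}
on the set where $\tilde\psi$ is finite. Since $\tilde\psi$ agrees in distribution with the a.e.-finite function $\psi$, it is finite at some $y_0>0$; the H\"older estimate then extends to all of $\R$ and forces $\tilde\psi(0)\leq\tilde\psi(y_0)+\|\tilde\psi'\|_{L^2}\sqrt{y_0}<\infty$. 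Combining with monotonicity gives the uniform bound $\tilde\psi\leq\tilde\psi(0)$, and the H\"older estimate gives (even uniform) continuity.

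The only mildly delicate point I expect is the P\'olya--Szeg\H o step, which presupposes that $|\{\psi>t\}|<\infty$ for every $t>0$ so that the decreasing rearrangement is well defined as a finite-valued function. This decay is automatic in the setting of \cref{pbm:inverseProblem}: once $c\geq 0$ is positive away from zero one has $|\{\psi>t\}|\leq\|c(\psi)\|_{L^1}/\inf_{s>t}c(s)<\infty$. With this in hand P\'olya--Szeg\H o in one dimension is a one-line consequence of the coarea formula applied to the level sets of $\psi$ and $\tilde\psi$, and everything else in the proof is bookkeeping.
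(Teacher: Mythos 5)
Your proof is correct and follows essentially the same route as the paper's: equimeasurability of $\psi$ and $\tilde\psi$ for the potential term, the one-dimensional P\'olya--Szeg\H o inequality for the Dirichlet term, and boundedness and continuity of $\tilde\psi$ from $\tilde\psi'\in L^2$ together with monotonicity (the paper invokes Sobolev embedding on bounded intervals, which amounts to the same Cauchy--Schwarz estimate you wrote out). Your closing remark on the finiteness of the level sets, needed for the rearrangement to be well defined, is a technical point the paper's one-line proof leaves implicit.
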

\begin{proof}
By definition of the symmetric decreasing rearrangement we have $\int_\R c(\psi)\,\d y=\int_\R c(\tilde\psi)\,\d y$.
Furthermore, by the P\'olya--Szeg\"o inequality we have $\int_\R\frac12|\psi'|^2\,\d y\geq\int_\R\frac12|\tilde\psi'|^2\,\d y$, resulting in $F^c[\psi]\geq F^c[\tilde\psi]$, as desired.
Obviously, $\tilde\psi$ lies in the Hilbert space $H^1((-R,R))$ for all $R>0$ and thus is continuous on any interval $[-R,R]$ by Sobolev embedding.
\end{proof}

\begin{theorem}[Generalized urban planning]\label{thm:genUrbPln}
Let the mass-specific phase field cost $z$ be piecewise constant,
\begin{equation*}
z(\phi)=c(\phi)/\phi=a_i
\quad\text{for }\phi\in[\phi_{i-1},\phi_i),
\,i\in I,
\end{equation*}
where $I=\{1,\ldots,N\}$ or $I=\N$ and %$(\phi_i,a_i)_{i\in I}$ are coefficients satisfying
$0=\phi_0<\phi_1<\phi_2<\ldots$ as well as $\infty>a_1>a_2>\ldots$ with $\phi_N=\infty$ if $I$ is finite.
% Let $0=\phi_0\leq\phi_1\leq\phi_2\leq\ldots$ and $\infty>a_1\geq a_2\geq a_3\geq\ldots$ as well as
% \begin{equation*}
% z(\phi)=c(\phi)/\phi=a_i\text{ if }\phi\in[\phi_{i-1},\phi_i)\,.
% \end{equation*}
The cost $\tau$ induced via \eqref{eqn:inverseProblem} is piecewise affine and reads
\begin{equation}\label{eqn:genUrbPln}
\tau(w)=\inf\left\{wa_j+\frac{4\sqrt2}3\sum_{i=1}^{j-1}\sqrt{a_i-a_j}(\phi_i^{3/2}-\phi_{i-1}^{3/2})\,\middle|\,j\in I\right\}\,.
\end{equation}
Furthermore, for any given admissible piecewise affine $\tau$ one can determine coefficients $a_i,\phi_i$ from the above formula such that the corresponding phase field cost $c$ induces $\tau$ via \eqref{eqn:inverseProblem}.
\end{theorem}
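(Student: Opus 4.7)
The plan is to establish \eqref{eqn:genUrbPln} by a Young's-inequality / Modica--Mortola argument paralleling the heuristic derivation in the introduction, then to obtain the converse statement by an algebraic inversion. Without loss of generality I replace $\psi$ by its symmetric decreasing rearrangement (which is permissible by \cref{thm:phaseFieldProperties}), so that $\psi$ attains a bounded maximum $M$ at the origin and is strictly monotone on each half-line; let $j\in I$ be chosen so that $M\in[\phi_{j-1},\phi_j)$, i.e.\ $z(M)=a_j$.

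For the lower bound, I split $\R$ at the level $\phi_{j-1}$. On the superlevel set $\{\psi>\phi_{j-1}\}$ one has $c(\psi)-a_j\psi=0$ and the integrand of $F^c[\psi]-a_jw$ reduces to $\tfrac12|\psi'|^2\geq0$. On the sublevel set $\{\psi\leq\phi_{j-1}\}$ the factor $z(\psi)-a_j$ is nonnegative, so Young's inequality yields $\tfrac12|\psi'|^2+(z(\psi)-a_j)\psi\geq|\psi'|\sqrt{2(z(\psi)-a_j)\psi}$; changing variables $\phi=\psi(y)$ on each monotone branch and exploiting the piecewise constancy of $z$ turns this into $2\int_0^{\phi_{j-1}}\sqrt{2(z(\phi)-a_j)\phi}\,\d\phi=b_j:=\frac{4\sqrt2}3\sum_{i<j}\sqrt{a_i-a_j}(\phi_i^{3/2}-\phi_{i-1}^{3/2})$. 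Thus $F^c[\psi]\geq a_jw+b_j\geq\inf_{j'\in I}(a_{j'}w+b_{j'})$. For the matching upper bound, for each $j\geq2$ and each $w\geq 2m_j^\ast:=2\int_0^{\phi_{j-1}}\sqrt{\phi/(2(z(\phi)-a_j))}\,\d\phi$ I build a competitor consisting of a plateau at height $\phi_{j-1}$ (of width adjusted to fix the total mass) joined symmetrically to two arcs obtained by integrating the ODE $|\psi'|^2=2(z(\psi)-a_j)\psi$ (an elementary quadrature on each constancy interval) from $\phi_{j-1}$ down to $0$. This $\psi\in H^1$ saturates Young's inequality on the arcs and has vanishing integrand on the plateau, giving $F^c[\psi]=a_jw+b_j$. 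For $j=1$ the descent is degenerate ($z\equiv a_1$ on $[0,\phi_1)$); I instead use a family of triangular bumps with peak $M\to0$ and base $w/M\to\infty$, whose energy tends to $a_1w=a_1w+b_1$ (so the infimum is only approached). Taking the infimum over $j$ yields \eqref{eqn:genUrbPln}.

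The converse direction is algebraic: given admissible piecewise affine $\tau=\min_j(A_jw+B_j)$ with strictly decreasing slopes $A_j$ and $B_1=0$, I set $a_j:=A_j$ and solve for $\phi_{j-1}^{3/2}$ inductively from $B_j=\frac{4\sqrt2}3\sum_{i<j}\sqrt{a_i-a_j}(\phi_i^{3/2}-\phi_{i-1}^{3/2})$, which is linear in the single unknown $\phi_{j-1}^{3/2}$ once $\phi_1,\ldots,\phi_{j-2}$ are fixed. The main obstacle I anticipate is verifying that the resulting sequence is strictly increasing, i.e.\ $\phi_{j-1}^{3/2}>\phi_{j-2}^{3/2}$. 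Rearrangement reduces this to a weighted inequality among consecutive $B_i$'s, which I expect to extract from the concavity of $\tau$: the breakpoints $w_j=(B_{j+1}-B_j)/(A_j-A_{j+1})$ of a concave piecewise affine $\tau$ form an increasing sequence, and combining this ordering with the monotonicity of $s\mapsto\sqrt{a_i-s}$ on $(-\infty,a_i)$ should yield exactly the positivity needed in the induction step.
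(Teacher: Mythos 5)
Your lower bound is correct and is in fact a slightly cleaner route than the paper's: where you apply Young's inequality directly on the sublevel set $\{\psi\le\phi_{j-1}\}$ and change variables, the paper first solves an auxiliary Dirichlet-energy problem on each band $[\phi_{i-1},\phi_i]$ and then optimizes over the masses allocated to the bands; both computations produce the same constant $b_j=\frac{4\sqrt2}{3}\sum_{i<j}\sqrt{a_i-a_j}\,(\phi_i^{3/2}-\phi_{i-1}^{3/2})$, and your choice of $j$ via the maximum of the rearranged profile matches the paper's. The converse direction also coincides with the paper's Step 4; the monotonicity $\phi_{j-1}>\phi_{j-2}$ that you flag is indeed the only nontrivial point there, and it is settled exactly as you anticipate, via the ordering of the breakpoints $w_j$ of the concave piecewise affine $\tau$.

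The genuine gap is in the upper bound. Your competitor for index $j$ exists only when $w\ge 2m_j^\ast=2\sum_{i<j}W_{ij}$ with $W_{ij}=\sqrt{\ell_i/(a_i-a_j)}$, $\ell_i=\tfrac29(\phi_i^{3/2}-\phi_{i-1}^{3/2})^2$, yet you conclude by ``taking the infimum over $j$''. If the infimum in \eqref{eqn:genUrbPln} is attained (or approached) at some $j$ with $w<2m_j^\ast$, you have exhibited no admissible profile with energy near $a_jw+b_j$, so the inequality $\tau(w)\le\inf_j(a_jw+b_j)$ does not follow. One must show that such $j$ never realize the infimum. This is precisely where the paper spends the bulk of its Step 3: writing $b_j=2\sum_{i<j}\bigl(\ell_i/W_{ij}+(a_i-a_j)W_{ij}\bigr)$, using $w-2\sum_{i<j}W_{ij}<0$ together with $a_{j-1}>a_j$, dropping the positive $i=j-1$ term, and invoking the optimality of $W_{i,j-1}$, it obtains $a_jw+b_j>a_{j-1}w+b_{j-1}$ whenever $w<2m_j^\ast$, which allows an induction down to the always-feasible case $j=1$. (Equivalently, one can verify that the breakpoint $w_{j-1}=(b_j-b_{j-1})/(a_{j-1}-a_j)$ satisfies $w_{j-1}\ge 2m_j^\ast$, using $\sqrt{a_i-a_j}-\sqrt{a_i-a_{j-1}}\ge\frac{a_{j-1}-a_j}{2\sqrt{a_i-a_j}}$ termwise, so that the line $a_jw+b_j$ is active only for masses at which your construction applies.) Without one of these arguments the upper bound, and hence \eqref{eqn:genUrbPln}, is not established.
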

\begin{proof}
To identify the induced $\tau$ we will explicitly construct optimal phase field profiles $\psi$ minimizing $F^c$ in \eqref{eqn:inverseProblem} for prescribed total mass $\int_\R\psi\,\d y$ and calculate their energy $F^c$.
These phase field profiles will be composed of multiple segments connecting the phase field values $\phi_{i-1}$ and $\phi_i$ for $i\in I$.
To identify these segments we first consider the auxiliary optimization problem
\begin{equation}\label{eqn:auxOptPbm}
E_{\phi_r,\phi_l}^w(T)=
\min\left\{\int_0^T|\varphi'|^2\,\d t\,\middle|\,\varphi(0)=\phi_l,\,\varphi(T)=\phi_r,\,\varphi([0,T])\subset[\phi_l,\phi_r],\,\int_0^T\varphi\,\d t=w\right\}
\end{equation}
for $\phi_r>\phi_l\geq0$, $T>0$, and $w\in[\phi_lT,\phi_rT]$ (a different $w$ would be incompatible with the constraints).

\emph{Step\,1.}
We explicitly solve \eqref{eqn:auxOptPbm}.
Since it is a convex optimization problem, by standard convex duality one obtains the optimality conditions
\begin{align*}
0&=\int_0^T\varphi\,\d t-w\,,\\
0&=\varphi''-\lambda\text{ on }(t_1,t_2)\,,\\
0&=\varphi-\phi_l\text{ on }[0,t_1]\,,\\
0&=\varphi-\phi_r\text{ on }[t_2,T]
\end{align*}
for some $0\leq t_1<t_2\leq T$ and a Lagrange multiplier $\lambda\in\R$
(that the box constraints are only active on intervals $[0,t_1]$ and $[t_2,T]$ follows again from the Polya--Szeg\"o inequality).
We first show $t_1=0$ or $t_2=T$.
Indeed, assume the opposite and set $\hat t(=(T\psi_r-w)/(\psi_r-\psi_l)\in(0,T)$,
then $\hat\varphi(t)=\varphi(\hat t+(1-\delta)(t-\hat t))$ for $\delta>0$ small enough still satisfies $\hat\varphi(0)=\psi_l$, $\hat\varphi(T)=\psi_r$, $\hat\varphi([0,T])\subset[\psi_l,\psi_r]$, and
\begin{equation*}
\int_0^T\hat\varphi(t)\,\d t
=\frac1{1-\delta}\int_{(1-\delta)\hat t}^{T-(1-\delta)(T-\hat t)}\varphi(s)\,\d s
=\frac{w-\psi_l(1-\delta)\hat t-\psi_r(1-\delta)(T-\hat t)}{1-\delta}
=w\,,
\end{equation*}
but $\hat\varphi$ has strictly smaller energy than $\varphi$, leading to a contradiction.

Now consider the case $t_2=T$ so that the optimal $\varphi$ is some cubic polynomial on $[t_1,T]$, that is,
\begin{equation*}
\varphi(t)=\begin{cases}
\phi_l&\text{if }t\leq t_1\\
\frac{t-t_1}{T-t_1}\phi_r+\frac{T-t}{T-t_1}\phi_l+\frac\lambda2(t-t_1)(t-T)&\text{else.}
\end{cases}
\end{equation*}
Note that $\varphi([0,T])\subset[\psi_l,\psi_r]$ is equivalent to $|\lambda|\leq2\frac{\phi_r-\phi_l}{(T-t_1)^2}$.
The condition $\int_0^T\varphi\,\d t=w$ then can be solved for $\lambda$ as
\begin{equation*}
\lambda=\lambda(t_1)=6\frac{(\phi_r-\phi_l)(T-t_1)+2(\phi_lT-w)}{(T-t_1)^3}\,.
\end{equation*}
Substituting $\varphi$ back into the Dirichlet energy we obtain
\begin{equation*}
\int_0^T|\varphi'|^2\,\d t
=3\frac{((\phi_r-\phi_l)(T-t_1)+2(\phi_lT-w))^2}{(T-t_1)^3}+\frac{(\phi_r-\phi_l)^2}{T-t_1}\,,
\end{equation*}
which is monotonically increasing in $t_1\in[0,T]$ as can be seen from the derivative with respect to $t_1$, given by $4((\phi_r-\phi_l)(T-t_1)+3(\phi_lT-w))^2/(T-t_1)^4$.
Thus, the Dirichlet energy is minimized for $t_1=0$  if this is admissible in the sense $|\lambda(t_1)|\leq2\frac{\phi_r-\phi_l}{(T-t_1)^2}$, which for $t_1=0$ is equivalent to $$w\in\left[\tfrac{2\phi_l+\phi_r}3T,\tfrac{\phi_l+2\phi_r}3T\right]\,.$$
Otherwise, we can distinguish two cases:
If $w>\frac{\phi_l+2\phi_r}3T$, then $\lambda(t_1)<-2\frac{\phi_r-\phi_l}{(T-t_1)^2}$ for all $t_1\in[0,T]$ so that there is no admissible $t_1$.
If $w<\frac{2\phi_l+\phi_r}3T$, then the smallest admissible $t_1$ can be calculated as
\begin{equation*}
\hat t_1=T-3\frac{w-\phi_lT}{\phi_r-\phi_l}\,.
\end{equation*}
Summarizing, if $t_2=T$ for the optimal $\varphi$, then $t_1=\max\{0,\hat t_1\}$ and we obtain
\begin{equation*}
E_{\phi_l,\phi_r}^w(T)
=\int_0^T|\varphi'|^2\,\d t
=\begin{cases}
3\frac{((\phi_r+\phi_l)T-2w)^2}{T^3}+\frac{(\phi_r-\phi_l)^2}{T}&\text{if }w\geq\frac{2\phi_l+\phi_r}3T\text{ (and thus $t_1=0$)}\,,\\
\frac49\frac{(\phi_r-\phi_l)^3}{w-\phi_lT}&\text{else (and thus $t_1=\hat t_1>0$)}\,.
\end{cases}
\end{equation*}
The second term is increasing in $T$ and is only valid for $T>\frac{3w}{2\phi_l+\phi_r}$; thus $E_{\phi_l,\phi_r}^w(T)$ must achieve its minimum in the first term, for $T\leq\frac{3w}{2\phi_l+\phi_r}$.
The first term is minimized by
\begin{equation*}
\hat T_{\phi_l,\phi_r}^w
%=3w\frac{\phi_l-\sqrt{\phi_l\phi_r}+\phi_r}{\phi_l^2+\phi_l\phi_r+\phi_r^2}
=\frac{3w}{\phi_l+\sqrt{\phi_l\phi_r}+\phi_r}
\qquad\text{with}\qquad
E_{\phi_l,\phi_r}^w(\hat T_{\phi_l,\phi_r}^w)
%=\frac49\frac{(\phi_l^2+\phi_l\phi_r+\phi_r^2)^2(-3(\phi_l+\phi_r)\sqrt{\phi_l\phi_r}+\phi_l^2+4\phi_l\phi_r+\phi_r^2)}{w(\phi_l-\sqrt{\phi_l\phi_r}+\phi_r)^3}
=\frac49\frac{(\phi_r^{3/2}-\phi_l^{3/2})^2}{w}
\end{equation*}
(it is decreasing for $T\leq\hat T_{\phi_l,\phi_r}^w$ and increasing for $T\in[\hat T_{\phi_l,\phi_r}^w,\frac{3w}{2\phi_l+\phi_r}]$).

Analogously one can consider the case $t_1=0$ and $t_2\leq T$, and one arrives at the same minimum cost and same optimal $\varphi$. Summarizing
(and abbreviating $\ell(\phi_l,\phi_r)=2(\phi_r^{3/2}-\phi_l^{3/2})^2/9$),
\begin{multline*}
\min\left\{\int_0^T|\varphi'|^2\,\d t\,\middle|\,\varphi(0)=\phi_l,\,\varphi(T)=\phi_r,\,\varphi\in[\phi_l,\phi_r],\,\int_0^T\varphi\,\d t=w,\,T>0\right\}\\
=\min\left\{E_{\phi_l,\phi_r}^w(T)\,\middle|\,T>0\right\}
=E_{\phi_l,\phi_r}^w(\hat T_{\phi_l,\phi_r}^w)
=\frac49\frac{(\phi_r^{3/2}-\phi_l^{3/2})^2}{w}
=2\frac{\ell(\phi_l,\phi_r)}w\,,
\end{multline*}
where the optimum is achieved by choosing
\begin{gather*}
T=\hat T_{\phi_l,\phi_r}^w\,,\quad
t_1=0\,,\quad
t_2=T\,,\quad
\lambda
%=\frac2{27}\frac{(\phi_l^2+\phi_l\phi_r+\phi_r^2)^2(-3(\phi_l+\phi_r)\sqrt{\phi_l\phi_r}+\phi_l^2+4\phi_l\phi_r+\phi_r^2)}{w^2(\phi_l-\sqrt{\phi_l\phi_r}+\phi_r)^3}
=\frac2{27}\frac{(\phi_r^{3/2}-\phi_l^{3/2})^2}{w^2}\geq0\,,\\
\varphi(t)=\varphi_{\phi_l,\phi_r}^w(t)=\frac tT\phi_r+\frac{T-t}T\phi_l+\frac\lambda2t(t-T)\,.
\end{gather*}
As a consequence, for any $a\geq0$ we also have
\begin{multline*}
\min\left\{\int_0^T\frac12|\varphi'|^2+a\varphi\,\d t\,\middle|\,\varphi(0)=\phi_l,\,\varphi(T)=\phi_r,\,\varphi\in[\phi_l,\phi_r],\,\int_0^T\varphi\,\d t=w,\,T>0\right\}
=\frac{\ell(\phi_l,\phi_r)}w+aw
\end{multline*}
with the same minimizer.

\emph{Step\,2.}
We show that $\tau$ is bounded below by the right-hand side of \eqref{eqn:genUrbPln}.
Hence, let $0=\phi_0<\phi_1<\ldots$ and $\infty>a_1>a_2>\ldots$ as well as $c(\phi)/\phi=a_i$ for $\phi\in[\phi_{i-1},\phi_i)$ and consider a function $\psi:\R\to[0,\infty)$ with $\int_\R\psi\,\d y=w$.
We would like to bound $F^c[\psi]$ from below.
By \cref{thm:phaseFieldProperties} we may assume without loss of generality that $\psi$ achieves its maximum $\hat\psi=\max\{\psi(y)\,|\,y\in\R\}$ in $y=0$ and is even and decreasing on $[0,\infty)$.
Next define $y_0=-\infty$ as well as
\begin{equation*}
y_i=\min\{y\in\R\,|\,\psi(y)\geq\phi_i\}\leq0
\quad\text{ and }\quad
w_i=\int_{y_{i-1}}^{y_i}\psi\,\d y\in[0,w]
\end{equation*}
for $i=1,\ldots,j=\max\{k\in I\,|\,\phi_k\leq\hat\psi\}$, and $w_{j+1}=\int_{y_j}^{-y_j}\psi\,\d y$.
Using the result of the previous step we have
\begin{align*}
w&=2\sum_{i=1}^{j}w_i+w_{j+1}\,,\\
F^c[\psi]&=2\sum_{i=1}^{j}\int_{y_{i-1}}^{y_i}\frac12|\psi'|^2+a_i\psi\,\d y+\int_{y_j}^{-y_j}\frac12|\psi'|^2+a_{j+1}\psi\,\d y\\
&\geq2\sum_{i=1}^{j}\left(\frac{\ell(\phi_{i-1},\phi_i)}{w_i}+a_iw_i\right)+a_{j+1}w_{j+1}\\
&=a_{j+1}w+2\sum_{i=1}^{j}\left(\frac{\ell(\phi_{i-1},\phi_i)}{w_i}+(a_i-a_{j+1})w_i\right)\\
&\geq a_{j+1}w+2\sum_{i=1}^{j}2\sqrt{\ell(\phi_{i-1},\phi_i)}\sqrt{a_i-a_{j+1}}\,,
\end{align*}
where in the last step we have minimized for the $w_i$, yielding
\begin{equation*}
w_i=W_{i,j+1}=\sqrt{\frac{\ell(\phi_{i-1},\phi_i)}{a_i-a_{j+1}}}\,.
\end{equation*}
Since $\psi$ was arbitrary, we obtain
\begin{equation*}
\tau(w)=\inf\left\{F^c[\psi]\,\middle|\,\int_\R\psi\,\d y=w\right\}
\geq\inf\left\{a_{j}w+4\sum_{i=1}^{j-1}\sqrt{\ell(\phi_{i-1},\phi_i)}\sqrt{a_i-a_{j}}\,\middle|\,j\in I\right\}\,.
\end{equation*}

\emph{Step\,3.}
We show that $\tau$ is also bounded above by the right-hand side of \eqref{eqn:genUrbPln}, that is,
that the previous inequality actually is an equality.
To this end, we show by induction in $j\in I$ that for any $w>0$ and $\delta>0$ we can find some $\psi:\R\to[0,\infty)$ with $\int_\R\psi\,\d y=w$ and $F^c[\psi]\leq F_j+\delta$
for $F_j=a_{j}w+4\sum_{i=1}^{j-1}\sqrt{\ell(\phi_{i-1},\phi_i)}\sqrt{a_i-a_{j}}$.
First consider the case $j=1$: it is straightforward to check that $\psi(y)=w\eta\max\{0,1-\eta|y|\}$ with $\eta=\sqrt[3]{\delta/w^2}$ satisfies $F^c[\psi]\leq a_1w+\delta$.
Now assume the induction hypothesis to hold for $j-1\geq1$; we aim to show existence of $\psi:\R\to[0,\infty)$ with $\int_\R\psi\,\d y=w$ and $F^c[\psi]\leq F_j+\delta$.
If $2\sum_{i=1}^{j-1}W_{ij}\leq w$, we can recursively define $y_j=-(w/2-\sum_{i=1}^{j-1}W_{ij})/\phi_{j-1}$ and $y_{i-1}=y_i-\hat T_{\phi_{i-1},\phi_i}^{W_{ij}}$ for $i=j,j-1,\ldots,1$ and choose
\begin{equation*}
\psi(y)=\begin{cases}
\phi_{j-1}&\text{if }|y|\leq|y_j|,\\
\varphi_{\phi_{i-1},\phi_i}^{W_{ij}}(|y_{i-1}|-|y|)&\text{if }|y|\in[|y_i|,|y_{i-1}|],\\
0&\text{else}
\end{cases}
\end{equation*}
to obtain $\int_\R\psi\,\d y=w$ and $F^c[\psi]=F_j$ (the function $\psi$ is illustrated in \cref{fig:generalizedUrbanPlanning}).
If on the other hand $2\sum_{i=1}^{j-1}W_{ij}>w$, then
\begin{align*}
F_j=
a_{j}w+4\sum_{i=1}^{j-1}\sqrt{\ell(\phi_{i-1},\phi_i)}\sqrt{a_i-a_{j}}
&=a_{j}w+2\sum_{i=1}^{j-1}\left(\frac{\ell(\phi_{i-1},\phi_i)}{W_{ij}}+(a_i-a_{j})W_{ij}\right)\\
&=a_j\left(w-2\sum_{i=1}^{j-1}W_{ij}\right)+2\sum_{i=1}^{j-1}\left(\frac{\ell(\phi_{i-1},\phi_i)}{W_{ij}}+a_iW_{ij}\right)\\
&>a_{j-1}\left(w-2\sum_{i=1}^{j-1}W_{ij}\right)+2\sum_{i=1}^{j-1}\left(\frac{\ell(\phi_{i-1},\phi_i)}{W_{ij}}+a_iW_{ij}\right)\\
&=a_{j-1}w+2\sum_{i=1}^{j-1}\left(\frac{\ell(\phi_{i-1},\phi_i)}{W_{ij}}+(a_i-a_{j-1})W_{ij}\right)\\
&>a_{j-1}w+2\sum_{i=1}^{j-2}\left(\frac{\ell(\phi_{i-1},\phi_i)}{W_{ij}}+(a_i-a_{j-1})W_{ij}\right)\\
&\geq a_{j-1}w+2\sum_{i=1}^{j-2}\left(\frac{\ell(\phi_{i-1},\phi_i)}{W_{i,j-1}}+(a_i-a_{j-1})W_{i,j-1}\right)\\
&=a_{j-1}w+4\sum_{i=1}^{j-2}\sqrt{\ell(\phi_{i-1},\phi_i)}\sqrt{a_i-a_{j-1}}
=F_{j-1}\,.
\end{align*}
Therefore, letting $\psi:\R\to[0,\infty)$ be the phase field from the previous induction step with $\int_\R\psi\,\d y=w$ and $F^c[\psi]\leq F_{j-1}+\delta$,
we also find $F^c[\psi]\leq F_j+\delta$, which conlcudes the proof by induction.

\emph{Step\,4.}
We show how the coefficients $a_i$ and $\psi_i$ can be recovered from a given piecewise affine $\tau$ such that the corresponding phase field cost $c$ induces $\tau$.
To this end we simply take $a_1,a_2,\ldots$ to be the slopes of the linear segments of $\tau$ in decreasing order.
The $\phi_j$ are then calculated from the points $w_j$ at which the segment of slope $a_j$ meets with the segment of slope $a_{j+1}$.
If $\tau$ can be expressed as \eqref{eqn:genUrbPln}, then necessarily
\begin{equation*}
w_j
=\frac{4\sum_{i=1}^{j}\sqrt{\ell(\phi_{i-1},\phi_i)}\sqrt{a_i-a_{j+1}}-4\sum_{i=1}^{j-1}\sqrt{\ell(\phi_{i-1},\phi_i)}\sqrt{a_i-a_{j}}}{a_j-a_{j+1}}
=\frac{4\sqrt2}3\sum_{i=1}^j\frac{\phi_{i}^{3/2}-\phi_{i-1}^{3/2}}{\sqrt{a_i-a_{j+1}}+\sqrt{a_i-a_j}}
\end{equation*}
so that
\begin{equation*}
\phi_{j}^{3/2}-\phi_{j-1}^{3/2}
=\left(\frac{3w_j}{4\sqrt2}-\sum_{i=1}^{j-1}\frac{\phi_{i}^{3/2}-\phi_{i-1}^{3/2}}{\sqrt{a_i-a_{j+1}}+\sqrt{a_i-a_j}}\right)\sqrt{a_j-a_{j+1}}\,,
\end{equation*}
which can readily be solved for $\phi_j$ given $\phi_0,\ldots,\phi_{j-1}$
(note that the parentheses are no smaller than $\frac3{4\sqrt2}(w_j-w_{j-1})$ and thus positive).
\end{proof}

\begin{figure}
\centering
\setlength\unitlength{.8\linewidth}
\includegraphics[width=\unitlength]{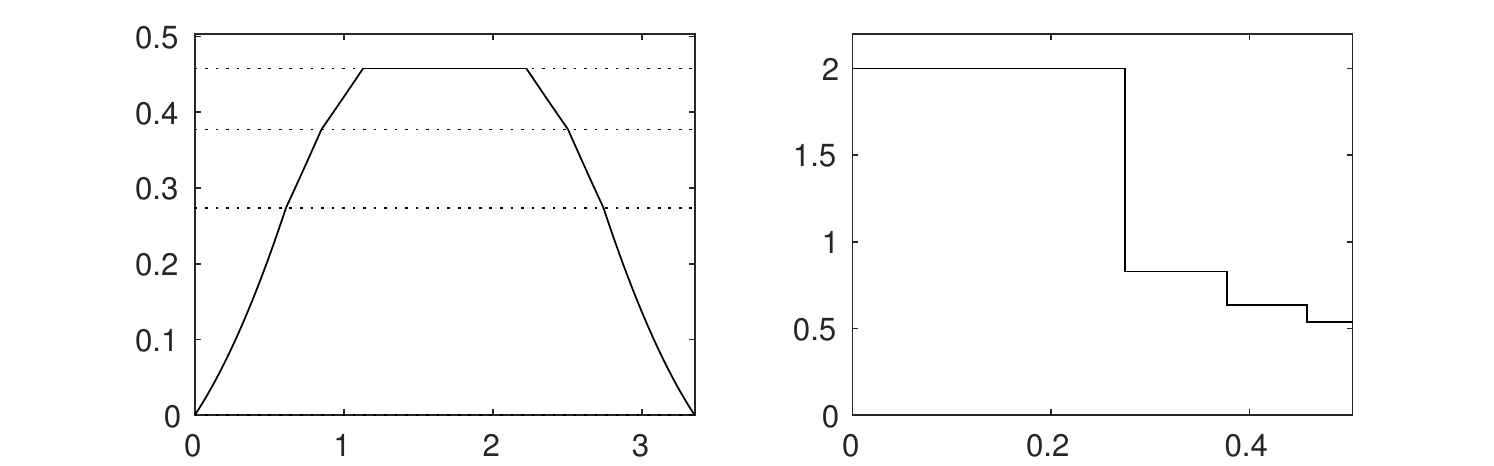}%obtained using piecewiseLinearTau.m
\begin{picture}(0,0)(1,0)
\put(.3,-.015){$y$}
\put(.05,.13){\rotatebox{90}{$\psi(y)$}}
\put(.09,.165){\small$\phi_1-$}
\put(.09,.217){\color{white}\rule{3ex}{4ex}}
\put(.09,.217){\small$\phi_2-$}
\put(.09,.26){\small$\phi_3-$}
\put(.72,-.015){$\phi$}
\put(.5,.13){\rotatebox{90}{$z(\phi)$}}
\multiput(.57,.121)(.01,0){19}{$\cdot$}
\multiput(.57,.099)(.01,0){25}{$\cdot$}
\multiput(.57,.088)(.01,0){31}{$\cdot$}
\put(.53,.07){\color{white}\rule{3ex}{4ex}}
\put(.53,.25){\color{white}\rule{3ex}{4ex}}
\put(.53,.26){\small$a_1-$}
\put(.53,.122){\small$a_2-$}
\put(.53,.1){\small$a_3-$}
\put(.53,.091){\small$a_4-$}
\multiput(.751,.03)(0,.01){10}{$\cdot$}
\multiput(.82,.03)(0,.01){8}{$\cdot$}
\multiput(.872,.03)(0,.01){7}{$\cdot$}
\put(.81,.0){\color{white}\rule{4ex}{2ex}}
\put(.742,.012){\small$\phi_1$}
\put(.812,.012){\small$\phi_2$}
\put(.864,.012){\small$\phi_3$}
\multiput(.188,.03)(0,.01){14}{$\cdot$}
\multiput(.212,.03)(0,.01){19}{$\cdot$}
\multiput(.24,.03)(0,.01){23}{$\cdot$}
\put(.21,.0){\color{white}\rule{4ex}{2ex}}
\put(.18,.012){\small$y_1$}
\put(.205,.012){\small$y_2$}
\put(.235,.012){\small$y_3$}
\end{picture}%
\caption{Optimal, piecewise quadratic phase field profile $\psi$ of fixed mass $w$ a for piecewise constant mass-specific phase field cost $z$.
The dotted lines indicate $\phi_1,\ldots,\phi_3$.}
\label{fig:generalizedUrbanPlanning}
\end{figure}

\section{Existence and properties of the phase field cost}\label{sec:existence}
In this section we show that for admissible transport costs $\tau$ there exists a (not necessarily unique) solution to \cref{pbm:inverseProblem}.
We will further present some a priori estimates on $c$.
The existence result will be based on approximating $\tau$ by its piecewise affine interpolation, reducing the problem to \cref{thm:genUrbPln},
and on the following simple \namecref{thm:monotonicityProperty}.

\begin{lemma}[Monotonicity property]\label{thm:monotonicityProperty}
Let transport costs $\tau$ and $\tilde\tau$ be induced via \eqref{eqn:inverseProblem} by $c$ and $\tilde c$, respectively.
If $c\leq\tilde c$, then $\tau\leq\tilde\tau$.
\end{lemma}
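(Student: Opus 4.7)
The plan is to prove this by a direct pointwise comparison of the phase field energies, then take infima. This is essentially a one-step argument with no real obstacles.

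First, I would fix an arbitrary $w \geq 0$ and an arbitrary phase field function $\psi : \R \to \R$ satisfying $\int_\R \psi \, \d y = w$. The hypothesis $c \leq \tilde c$ (pointwise on $[0,\infty)$) immediately gives $c(|\psi(y)|) \leq \tilde c(|\psi(y)|)$ for every $y \in \R$, so integrating and adding the (common) Dirichlet term yields
\begin{equation*}
F^c[\psi] = \int_\R \tfrac12 |\psi'(y)|^2 + c(|\psi(y)|) \, \d y \leq \int_\R \tfrac12 |\psi'(y)|^2 + \tilde c(|\psi(y)|) \, \d y = F^{\tilde c}[\psi].
\end{equation*}

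Then I would take the infimum over all admissible $\psi$ with fixed mass $w$ on both sides. Since the inequality holds for each competitor individually, the infima satisfy $\tau(w) \leq \tilde\tau(w)$, and since $w$ was arbitrary this gives $\tau \leq \tilde\tau$ as claimed.

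There is no real obstacle here: the comparison is pointwise in $\psi$, the admissible set of competitors in \eqref{eqn:inverseProblem} depends only on $w$ (not on the cost), and the infimum is order-preserving. No regularity, measurability, or attainment issues need to be addressed because the statement concerns only the values of the infima.
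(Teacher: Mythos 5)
Your proof is correct and is exactly the paper's argument: the paper's proof reads simply ``This follows immediately from $F^c\leq F^{\tilde c}$,'' which is the pointwise comparison of energies you spell out before taking infima.
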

\begin{proof}
This follows immediately from $F^c\leq F^{\tilde c}$.
\end{proof}

Before the existence result (\cref{thm:existence}) we first provide some auxiliary lemmas.
The energy associated with a phase field function can be estimated via the standard Modica--Mortola trick, which we recall here.

\begin{lemma}[Modica--Mortola estimate]\label{thm:ModicaMortolaTrick}
Let $c:[0,\infty)\to[0,\infty)$ be lower semi-continuous and bounded on any interval $[a,b]\subset(0,\infty)$.
For any $T\in[0,\infty]$, $\hat\psi>0$, and $\psi:[0,T]\to[0,\infty)$ with $\psi(0)=\hat\psi$ and $\lim_{t\to T}\psi(t)=0$ we have
\begin{equation*}
\int_0^T\frac12|\psi'|^2+c(\psi)\,\d y\geq\int_0^{\hat\psi}\sqrt{2c(\phi)}\,\d\phi\,.
\end{equation*}
Furthermore, for any $\delta,\hat\psi>0$ there exist $T\in(0,\infty)$ and $\psi:[0,T]\to[0,\infty)$ with $\psi(0)=\hat\psi$ and $\psi(T)=0$ and
\begin{equation*}
\int_0^T\frac12|\psi'|^2+c(\psi)\,\d y\leq\int_0^{\hat\psi}\sqrt{2c(\phi)}\,\d\phi+\delta\,.
\end{equation*}
\end{lemma}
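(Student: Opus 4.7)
The plan for the lower bound is to apply the classical Modica--Mortola trick. Observing that $c\geq0$, Young's inequality gives
\[
\tfrac12|\psi'(y)|^2+c(\psi(y))\;\geq\;|\psi'(y)|\sqrt{2c(\psi(y))}
\]
for a.e.\ $y$. Because finite energy forces $\psi\in H^1_{\mathrm{loc}}$, by Sobolev embedding $\psi$ is absolutely continuous on $[0,T)$ with $\psi(0)=\hat\psi$ and $\psi(y)\to 0$ as $y\to T$. Integrating the pointwise bound and invoking the coarea formula for absolutely continuous functions yields
\[
\int_0^T|\psi'|\sqrt{2c(\psi)}\,\d y\;=\;\int_0^{\hat\psi}\sqrt{2c(\phi)}\,\hd^0(\psi^{-1}(\phi))\,\d\phi,
\]
and by the intermediate value theorem $\hd^0(\psi^{-1}(\phi))\geq1$ for every $\phi\in(0,\hat\psi)$, which delivers the claimed lower bound.

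For the upper bound I would construct an explicit near-optimal profile by solving a regularised version of the formal Euler--Lagrange ODE $\psi'=-\sqrt{2c(\psi)}$. Fixing $\varepsilon>0$, set
\[
T_\varepsilon(\phi)\;=\;\int_\phi^{\hat\psi}\frac{\d\tilde\phi}{\sqrt{2(c(\tilde\phi)+\varepsilon)}}\qquad\text{for }\phi\in[0,\hat\psi],
\]
which is continuous and strictly decreasing with $T_\varepsilon(0)\leq\hat\psi/\sqrt{2\varepsilon}<\infty$. Its inverse $\psi_\varepsilon:[0,T_\varepsilon(0)]\to[0,\hat\psi]$ is the candidate profile: it satisfies $\psi_\varepsilon(0)=\hat\psi$, $\psi_\varepsilon(T_\varepsilon(0))=0$, and $\psi_\varepsilon'=-\sqrt{2(c(\psi_\varepsilon)+\varepsilon)}$ almost everywhere. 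The change of variables $\phi=\psi_\varepsilon(y)$ then gives
\[
\int_0^{T_\varepsilon(0)}\tfrac12|\psi_\varepsilon'|^2+c(\psi_\varepsilon)\,\d y\;=\;\int_0^{\hat\psi}\frac{2c(\phi)+\varepsilon}{\sqrt{2(c(\phi)+\varepsilon)}}\,\d\phi\;\leq\;\int_0^{\hat\psi}\sqrt{2(c(\phi)+\varepsilon)}\,\d\phi,
\]
and the elementary inequality $\sqrt{a+b}\leq\sqrt a+\sqrt b$ bounds this by $\int_0^{\hat\psi}\sqrt{2c(\phi)}\,\d\phi+\hat\psi\sqrt{2\varepsilon}$, which is below $\int_0^{\hat\psi}\sqrt{2c(\phi)}\,\d\phi+\delta$ once $\varepsilon$ is chosen small enough.

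The main obstacles will be essentially technical bookkeeping. For the lower bound one must justify the coarea formula for an absolutely continuous $\psi$ and verify that $c(\psi)$ is Borel measurable, which follows from the continuity of $\psi$ and the lower semi-continuity of $c$. For the upper bound the nontrivial point is to check that $\psi_\varepsilon$ is genuinely absolutely continuous, so that the change of variables is legitimate, even though $c$ may blow up at $0$: the uniform lower bound $c+\varepsilon\geq\varepsilon$ keeps the integrand in the definition of $T_\varepsilon$ bounded by $1/\sqrt{2\varepsilon}$, which is precisely what is required. The case $\int_0^{\hat\psi}\sqrt{2c(\phi)}\,\d\phi=\infty$ renders the upper-bound claim trivial and needs no separate treatment.
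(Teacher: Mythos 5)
Your proof is correct and follows essentially the same route as the paper: Young's inequality plus a change of variables (which you justify via the coarea/Banach-indicatrix formula) for the lower bound, and for the upper bound the inversion of a regularised version of the equal-partition ODE $\psi'=-\sqrt{2c(\psi)}$, where you place the regulariser $\varepsilon$ under the square root while the paper adds $\tilde\delta=\delta/\hat\psi$ to $\sqrt{c}$ before inverting --- an immaterial difference. The technical points you flag (absolute continuity of the inverse profile, measurability of $c\circ\psi$) are exactly the ones the paper's proof addresses.
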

\begin{proof}
Using Young's inequality $a^2/2+b^2/2\geq ab$ one has
\begin{equation*}
\int_0^T\frac12|\psi'|^2+c(\psi)\,\d y
\geq\int_0^T|\psi'|\sqrt{2c(\psi)}\,\d y
\geq\int_0^{\hat\psi}\sqrt{2c(\phi)}\,\d\phi\,,
\end{equation*}
where in the last step we performed the change of variables $\phi=\psi(y)$.

To show the other inequality, define $\tilde\delta=\delta/\hat\psi$, $\Theta_{\max}=\int_0^\infty1/(\sqrt{c(\phi)}+\tilde\delta)\,\d\phi$, as well as the function
\begin{equation*}
\Theta:[0,\infty)\to[0,\Theta_{\max})\,,\quad
\Theta(\hat\phi)=\int_0^{\hat\phi}\frac1{\sqrt{c(\phi)}+\tilde\delta}\,\d\phi\,.
\end{equation*}
It is straightforward to see that $\Theta$ is Lipschitz continuous and invertible.
Indeed, $\Theta$ is the integral of an integrand which takes values in $(0,1/\tilde\delta]$,
thus it is in the Sobolev space $W^{1,\infty}$, is strictly increasing, and satisfies $\Theta(0)=0$.
Now let $T=\Theta(\hat\psi)$ and $\psi:[0,T]\to[0,\infty)$, $\psi(t)=\Theta^{-1}(T-t)$.
The function $\psi$ is locally Lipschitz on $(0,T)$.
Indeed, for $T>t_2>t_1>0$ we have
\begin{multline*}
\left|\frac{\psi(t_2)-\psi(t_1)}{t_2-t_1}\right|
=\left|\frac{\psi(t_2)-\psi(t_1)}{\Theta(T-\psi(t_2))-\Theta(T-\psi(t_1))}\right|\\
=\frac{(T-\psi(t_2))-(T-\psi(t_1))}{\int_{T-\psi(t_1)}^{T-\psi(t_2)}\frac1{\sqrt{c(\phi)}+\tilde\delta}\,\d\phi}
\leq\sqrt{\sup\{c(\phi)\,|\,\psi(t_2)\leq T-\phi\leq\psi(t_1)\}}+\tilde\delta\,.
\end{multline*}
Consequently, $\psi$ is differentiable almost everywhere with
\begin{equation*}
\psi'(t)=\frac{-1}{\Theta'(\psi(t))}=-(\sqrt{c(\psi(t))}+\tilde\delta)\,.
\end{equation*}
Thus, again using the change of variables $\phi=\psi(y)$, we obtain
\begin{multline*}
\int_0^T\frac12|\psi'|^2+c(\psi)\,\d y
\leq\int_0^T\frac12|\psi'|^2+\frac{(\sqrt{2c(\psi)}+\tilde\delta)^2}2\,\d y
=\int_0^T|\psi'|(\sqrt{2c(\psi)}+\tilde\delta)\,\d y\\
=\int_0^{\hat\psi}\sqrt{2c(\phi)}+\tilde\delta\,\d\phi
=\int_0^{\hat\psi}\sqrt{2c(\phi)}\,\d\phi+\delta\,.
\qedhere
\end{multline*}
\end{proof}

\begin{lemma}[Rescaling of a converging sequence]\label{thm:rescaling}
Let $z_n:[0,\infty)\to[0,\infty)$, $n=1,2,\ldots$, be a sequence of nonincreasing functions converging pointwise almost everywhere to some $z:[0,\infty)\to[0,\infty)$, and let $\delta>0$ as well as $\alpha,\beta>1$.
If $n$ is large enough, then for all $\phi\in(\delta,1/\delta)$ we have
\begin{equation*}
\tfrac1\alpha z_n(\beta\phi)-\delta\leq z(\phi)\leq\alpha z_n(\tfrac1\beta\phi)\,.
\end{equation*}
\end{lemma}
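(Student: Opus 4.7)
The plan is to sandwich the values $z_n(\beta\phi)$ and $z_n(\phi/\beta)$ between values of $z_n$ at a finite grid of points where pointwise convergence holds, and then transfer this convergence on the grid to all of $(\delta,1/\delta)$ via the monotonicity of $z$ and the $z_n$.

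After redefinition on a null set, $z$ itself is nonincreasing (as the a.e.\ limit of nonincreasing functions). I would then fix some $\gamma\in(1,\beta)$, construct a geometric grid of finitely many nominal points covering $[\delta/\beta,\beta/\delta]$ with successive ratios at most $\gamma$, and perturb each nominal point slightly to a point $\phi_j^*$ at which the full-measure convergence $z_n(\phi_j^*)\to z(\phi_j^*)$ holds; a sufficiently small perturbation preserves the strict ratio bound $\phi_{j+1}^*/\phi_j^*<\beta$.

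Given any $\phi\in(\delta,1/\delta)$, I locate the index $i$ with $\phi_i^*\le\phi<\phi_{i+1}^*$. The grid ratio bound $\phi_{i+1}^*<\beta\phi_i^*$ then yields $\phi/\beta\le\phi_i^*$ and $\phi_{i+1}^*\le\beta\phi$. Monotonicity of $z_n$ therefore sandwiches $z_n(\phi/\beta)\ge z_n(\phi_i^*)$ and $z_n(\beta\phi)\le z_n(\phi_{i+1}^*)$, while monotonicity of $z$ gives $z(\phi_{i+1}^*)\le z(\phi)\le z(\phi_i^*)$. Because the grid is finite, for $n$ large enough one has simultaneously, at every grid point $\phi_j^*$, the additive bound $z_n(\phi_j^*)\le z(\phi_j^*)+\alpha\delta$ and, whenever $z(\phi_j^*)>0$, the multiplicative bound $z_n(\phi_j^*)\ge z(\phi_j^*)/\alpha$.

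Combining these pieces finishes the proof. For the upper bound, either $z(\phi_i^*)=0$, in which case $z(\phi)\le z(\phi_i^*)=0\le\alpha z_n(\phi/\beta)$ trivially, or $z(\phi_i^*)>0$, giving $z(\phi)\le z(\phi_i^*)\le\alpha z_n(\phi_i^*)\le\alpha z_n(\phi/\beta)$. For the lower bound, the additive bound together with $\alpha>1$ gives $z_n(\beta\phi)/\alpha\le z_n(\phi_{i+1}^*)/\alpha\le z(\phi_{i+1}^*)/\alpha+\delta\le z(\phi_{i+1}^*)+\delta\le z(\phi)+\delta$, as required. The main subtlety, and the reason for the asymmetry between the two inequalities, is exactly the case $z(\phi_j^*)=0$: there additive convergence of $z_n(\phi_j^*)\to 0$ cannot be upgraded to a multiplicative bound, but monotonicity absorbs this harmlessly on the upper side, while on the lower side it is precisely what forces the additive slack $-\delta$.
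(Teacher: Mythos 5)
Your proof is correct, but it takes a genuinely different route from the paper's. The paper invokes Egorov's theorem twice: it extracts a set $A$ on which $z_n/z$ (respectively $(z_n+\delta)/(z+\delta)$) converges uniformly, with the complement's Lebesgue measure kept below $\zeta=\delta(1-1/\beta)$ (respectively $\min\{\delta,\delta(\beta-1)\}$), and then for $\phi\notin A$ it finds a nearby good point $\hat\phi\in A$ with $\hat\phi>\phi/\beta$ (respectively $\hat\phi<\beta\phi$) and bridges the gap by monotonicity; the zero set of $z$ is handled separately via $s=\sup\{\phi\,|\,z(\phi)>0\}$. You instead avoid Egorov entirely: you pick a finite geometric grid of points of pointwise convergence with consecutive ratios below $\beta$ (possible since the convergence set has full measure, hence is dense), obtain the multiplicative and additive bounds at these finitely many points simultaneously for large $n$, and transfer them to arbitrary $\phi$ by the same monotonicity sandwich. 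The underlying mechanism is identical — the multiplicative slack $\beta$ in the argument absorbs a gap between $\phi$ and a nearby good point — but your version replaces a measure-theoretic tool by a purely elementary density argument, and your case distinction at grid points where $z$ vanishes replaces the paper's treatment via $s$. Your closing remark correctly identifies why the lower bound needs the additive $-\delta$ while the upper bound does not. One cosmetic point, shared with the paper: since $z$ is only an a.e.\ limit, asserting the conclusion for \emph{all} $\phi\in(\delta,1/\delta)$ implicitly requires $z$ to be taken as its monotone representative, which is exactly how the lemma is applied (the paper chooses the lower semi-continuous, nonincreasing representative), so your "after redefinition on a null set" matches the intended reading.
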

\begin{proof}
First note that $z$ also is nonincreasing as the limit of nonincreasing functions.
We first prove the second inequality.
Let $s=\sup\{\phi\in(\delta,1/\delta)\,|\,z(\phi)>0\}$, then $z>0$ on $(\delta,s)$.
Note that $z_n/z$ converges pointwise almost everywhere to $1$ on $(0,s)$ as $n\to\infty$.
Thus, by Egorov's theorem there exists some measurable $A\subset(0,s)$ with $(0,s)\setminus A$ having Lebesgue measure smaller than $\zeta=\delta(1-1/\beta)$ such that $z_n/z\to1$ uniformly on $A$.
Now pick $N\in\N$ such that $z_n(\phi)/z(\phi)>1/\alpha$ for all $\phi\in A$ and $n\geq N$.
Consider an arbitrary $n\geq N$.
For $\phi>s$ we trivially have
\begin{equation*}
z(\phi)=0\leq\alpha z_n(\tfrac1\beta\phi)\,.
\end{equation*}
Similarly, for $\phi\in A$ we have
\begin{equation*}
z(\phi)
\leq\alpha z_n(\phi)
\leq\alpha z_n(\tfrac1\beta\phi)\,.
\end{equation*}
Finally, let $\phi\in(\delta,s]\setminus A$, then there exists some $\hat\phi\in A\cap(\phi-\zeta,\phi)$ so that
\begin{equation*}
z(\phi)
\leq z(\hat\phi)
\leq\alpha z_n(\hat\phi)
\leq\alpha z_n(\tfrac1\beta\phi)
\end{equation*}
due to the monotonicity of $z$ and $z_n$ as well as $\hat\phi>\phi/\beta$.

The first inequality is shown similarly.
Indeed, since $B_n=(z_n(\phi)+\delta)/(z(\phi)+\delta)$ converges to $1$ for almost all $\phi\geq0$,
by Egorov's theorem there is some $A\subset(0,1/\delta+\delta)$ with $(0,1/\delta+\delta)\setminus A$ having Lebesgue measure smaller than $\zeta=\min\{\delta,\delta(\beta-1)\}$ such that $B_n$ converges uniformly on $A$.
Now pick $N\in\N$ such that $B_n<\alpha$ for all $\phi\in A$ and $n\geq N$, and consider an arbitrary $n\geq N$.
For any $\phi\in(\delta,1/\delta)$ there exists some $\hat\phi\in A\cap[\phi,\phi+\zeta)$ so that
\begin{equation*}
z(\phi)+\delta
\geq z(\hat\phi)+\delta
\geq\tfrac1\alpha(z_n(\hat\phi)+\delta)
\geq\tfrac1\alpha(z_n(\beta\phi)+\delta)
\geq\tfrac1\alpha z_n(\beta\phi)\,.
\qedhere
\end{equation*}
\end{proof}

\begin{remark}[Tighter rescaling bounds]
The lower bound of the previous lemma can be sharpened in different ways,
for instance its validity can be extended to all of $(\delta,\infty)$.
Also, the $\delta$ in the lower bound is only required if $z$ is not bounded away from zero.
However, for our purposes the above form of the statement is sufficient.
\end{remark}

\begin{lemma}[A priori estimate on phase field energy]\label{thm:energyEstimate}
Let $c:[0,\infty)\to[0,\infty)$ be arbitrary and let the phase field function $\psi:\R\to[0,\infty)$ have mass $\int_\R\psi\,\d y=w$ and maximum value $\hat\psi$.
Then $F^{c}[\psi]\geq\frac{(\hat\psi/2)^3}{w/2}$.
\end{lemma}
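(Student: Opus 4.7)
The key observation is that the right-hand side does not depend on $c$, so since $c\geq0$ we may simply bound $F^c[\psi]\geq\int_\R\frac12|\psi'|^2\,\d y$ and ignore the potential entirely. The plan is to use a Cauchy--Schwarz argument on the profile $\psi^{3/2}$ after first reducing to the symmetric decreasing rearrangement.

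First I would reduce to the symmetric decreasing case. By \cref{thm:phaseFieldProperties} the rearrangement $\tilde\psi$ has the same mass $w$, the same maximum $\hat\psi$, and satisfies $F^c[\tilde\psi]\leq F^c[\psi]$, so proving the inequality for $\tilde\psi$ is enough. Thus I assume $\psi$ is even, continuous (Sobolev embedding, since $F^c[\psi]<\infty$ is the only nontrivial case), with $\psi(0)=\hat\psi$ and $\psi(y)\searrow0$ as $y\to\infty$ (the decay to $0$ follows from monotonicity together with $\psi\in L^1$). In particular $\int_0^\infty\psi\,\d y=w/2$.

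Next I would write $\hat\psi^{3/2}=-\int_0^\infty\frac{\d}{\d y}\bigl(\psi^{3/2}\bigr)\,\d y=-\frac32\int_0^\infty\psi^{1/2}\psi'\,\d y$ and apply Cauchy--Schwarz to obtain
\begin{equation*}
\hat\psi^{3/2}\leq\frac32\left(\int_0^\infty|\psi'|^2\,\d y\right)^{1/2}\left(\int_0^\infty\psi\,\d y\right)^{1/2}=\frac32\left(\int_0^\infty|\psi'|^2\,\d y\right)^{1/2}\sqrt{w/2}\,.
\end{equation*}
Rearranging yields $\int_0^\infty|\psi'|^2\,\d y\geq\frac{8\hat\psi^3}{9w}$, and by symmetry the same bound holds on $(-\infty,0]$. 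Summing the two halves and halving gives $F^c[\psi]\geq\frac12\int_\R|\psi'|^2\,\d y\geq\frac{8\hat\psi^3}{9w}$, which is stronger than the desired $\hat\psi^3/(4w)=(\hat\psi/2)^3/(w/2)$ since $8/9>1/4$.

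There is no real obstacle here — the only point requiring a bit of care is justifying that the rearranged $\psi$ actually decays to $0$ at $\infty$ (so that the fundamental-theorem-of-calculus step is valid), which follows from monotonicity plus $\psi\in L^1(\R)$, and that $\psi^{1/2}\psi'$ is integrable, which is immediate from Cauchy--Schwarz applied to the finite quantities on the right. If the rearrangement step is felt to be unnecessary baggage, one can instead split $\R$ at any point where $\psi$ attains its maximum and run the same Cauchy--Schwarz on each side, using $w=\int_{-\infty}^{y_0}\psi+\int_{y_0}^\infty\psi$ and the elementary inequality $1/w_1+1/w_2\geq 4/(w_1+w_2)$.
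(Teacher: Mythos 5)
Your proof is correct, and it takes a genuinely different route from the paper's. Both arguments discard the potential term (legitimately, since $c\geq0$ and the claimed bound is $c$-independent) and both invoke \cref{thm:phaseFieldProperties} to reduce to an even, decreasing profile; but from there the paper truncates at the half-maximum level set, setting $T=\max\{t>0\,|\,\psi(t)\geq\hat\psi/2\}$, bounds $T\leq w/\hat\psi$, and applies Jensen's inequality to $\psi'$ on $[0,T]$ to get $\int_0^T|\psi'|^2\,\d y\geq(\hat\psi/2)^2/T$ — which is exactly where the factors of $2$ in the statement come from. You instead run a weighted Cauchy--Schwarz on $\psi^{1/2}\psi'$ over the whole half-line, which is really the Modica--Mortola trick of \cref{thm:ModicaMortolaTrick} applied to the surrogate potential $\phi\mapsto\phi$; it yields $F^c[\psi]\geq\frac{8\hat\psi^3}{9w}$, which is not only stronger than the stated $\frac{\hat\psi^3}{4w}$ but is in fact the sharp constant for the Dirichlet-only problem (equality is approached by the parabolic profile $\psi(y)=\hat\psi(1-|y|/L)^2$ with $L=3w/(2\hat\psi)$, consistent with the optimal profiles of \cref{thm:genUrbPln}). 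What the paper's version buys is slightly lower overhead — no chain rule for $\psi^{3/2}$ in $H^1_{\mathrm{loc}}$ and no decay-at-infinity discussion, since everything happens on a compact interval; what yours buys is the optimal constant. Your technical caveats (continuity via Sobolev embedding in the nontrivial case $F^c[\psi]<\infty$, decay of the monotone $L^1$ profile, integrability of $\psi^{1/2}\psi'$) are the right ones and are adequately handled.
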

\begin{proof}
By \cref{thm:phaseFieldProperties} we may assume without loss of generality that $\psi$ achieves its maximum $\hat\psi$ in $y=0$ and is even and decreasing on $[0,\infty)$.
Letting $T=\max\{t>0\,|\,\psi(t)\geq\hat\psi/2\}$ we have
\begin{equation*}
w=\int_\R\psi\,\d y\geq\int_{-T}^T\psi\,\d y\geq2T\tfrac{\hat\psi}2=\hat\psi T
\end{equation*}
and thus, using Jensen's inequality,
\begin{equation*}
F^c[\psi]
\geq\int_{-T}^T\frac12|\psi'|^2\,\d y
=\int_0^T|\psi'|^2\,\d y
\geq T\left(\frac1T\int_0^T\psi'\,\d y\right)^2
=\frac{(\hat\psi/2)^2}T
\geq2\frac{(\hat\psi/2)^3}w\,.
\qedhere
\end{equation*}
\end{proof}

To simplify notation, in the following we abbreviate
\begin{equation*}
\tau^z(w)=\inf\left\{\{F^c[\psi]\,|\,\int_\R\psi\,\d y=w\right\}
\qquad\text{for }c(\phi)=\phi z(\phi)\,,
\end{equation*}
that is, $\tau^z$ is the transport cost induced via \eqref{eqn:inverseProblem}.

\begin{theorem}[Existence of phase field cost]\label{thm:existence}
\Cref{pbm:inverseProblem} has a solution $c:[0,\infty)\to[0,\infty)$ for every admissible transport cost $\tau$.
Furthermore, $c$ can be chosen such that the mass-specific phase field cost $\phi\mapsto z(\phi)=c(\phi)/\phi$ is lower semi-continuous and nonincreasing.
\end{theorem}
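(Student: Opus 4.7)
The strategy is to approximate $\tau$ by piecewise affine admissible transport costs $\tau_n$ and take a limit of the explicit piecewise constant phase field costs provided by \cref{thm:genUrbPln}. For instance, let $\tau_n$ be the piecewise linear interpolant of $\tau$ on the grid $\{k/n\mid k=0,1,2,\ldots\}$; by concavity of $\tau$ this $\tau_n$ is itself admissible, satisfies $\tau_n\leq\tau$, and converges to $\tau$ uniformly on every bounded interval. For each $n$, \cref{thm:genUrbPln} provides a nonincreasing piecewise constant mass-specific phase field cost $z_n:[0,\infty)\to[0,\infty]$ with $\tau^{z_n}=\tau_n$.

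To extract a limit I would use Helly's selection theorem: the $z_n$ are nonincreasing, and I claim that for each $\phi_\ast>0$ the values $z_n(\phi_\ast)$ are bounded uniformly in $n$. Indeed, at a differentiability point $w$ of $\tau_n$ the formal relation $\hat\psi=z_n^{-1}(\tau_n'(w))$ from the optimality conditions combines with the a priori bound $\hat\psi\leq 2\sqrt[3]{\tau_n(w)w/2}$ of \cref{thm:energyEstimate} to force $z_n(\phi_\ast)$ to lie below the slope of $\tau$ at a positive argument $w_\ast$ depending only on $\phi_\ast$ (and on $\tau$), which is finite by concavity of $\tau$. A diagonal extraction then yields a subsequence of $z_n$ converging pointwise almost everywhere to a nonincreasing limit $z:[0,\infty)\to[0,\infty]$. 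Replacing $z$ by its left-continuous representative preserves its a.e.\ values and produces the required lower semi-continuous, nonincreasing $z$.

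The main step is to verify $\tau^z=\tau$, which is driven by \cref{thm:rescaling}. Given $\alpha,\beta>1$ and $\delta>0$, for $n$ large one has $\frac{1}{\alpha}z_n(\beta\phi)-\delta\leq z(\phi)\leq\alpha z_n(\phi/\beta)$ on $(\delta,1/\delta)$. From any near-minimizer $\psi$ of $\tau^{z_n}(w)$, consider the rescaled competitor $\tilde\psi(y)=\beta\psi(\beta y)$: it still has mass $\int\tilde\psi=w$, its kinetic term scales by a factor $\beta^3$, and its potential transforms as $\int c(\tilde\psi)\,\d y=\beta^{-1}\int c(\beta\psi)\,\d s$ with $c(\beta\phi)=\beta\phi\,z(\beta\phi)\leq\alpha\beta\,\phi\,z_n(\phi)=\alpha\beta c_n(\phi)$ on the relevant range. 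This yields $F^c[\tilde\psi]\leq\max(\beta^3,\alpha)\,F^{c_n}[\psi]$ up to terms of order $\delta w$, and taking the infimum over $\psi$ produces $\tau^z(w)\leq\max(\beta^3,\alpha)\,\tau^{z_n}(w)+O(\delta w)$. The reverse bound $\tau^{z_n}(w)\leq\max(\beta^3,\alpha)\,\tau^z(w)+O(\delta w)$ follows analogously from the other side of \cref{thm:rescaling}. Letting $\alpha,\beta\downarrow 1$ and $\delta\downarrow 0$ while using $\tau^{z_n}=\tau_n\to\tau$ concludes $\tau^z=\tau$.

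The main obstacle I expect is handling the portions of $\psi$ whose values lie outside $(\delta,1/\delta)$, where \cref{thm:rescaling} provides no information. By \cref{thm:phaseFieldProperties} I may reduce to symmetric decreasing profiles; their maximum is bounded in terms of $w$ and the current energy level via \cref{thm:energyEstimate}, which rules out values above $1/\delta$ for sufficiently small $\delta$. Values below $\delta$ can be removed by truncating $\psi$ at height $\delta$, at an additional energy cost bounded via \cref{thm:ModicaMortolaTrick} by $\int_0^\delta\sqrt{2c(\phi)}\,\d\phi$, which tends to $0$ as $\delta\to 0$ after an initial reduction ensuring local boundedness of $c$ away from $0$. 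Once these tails are controlled, the rescaling estimate applies uniformly on the remaining compact range of $\phi$ and produces the convergence $\tau^z=\tau$.
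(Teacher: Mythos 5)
Your overall strategy coincides with the paper's: piecewise affine interpolation of $\tau$, the explicit costs from \cref{thm:genUrbPln}, compactness of the nonincreasing $z_n$, and comparison of energies via \cref{thm:rescaling} combined with Modica--Mortola surgery on the tails. The compactness step and the rescaling computation are essentially right (one small slip: for a nonincreasing function the lower semi-continuous representative is the \emph{right}-continuous one, not the left-continuous one).

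The genuine gap is in your last paragraph, where you truncate $\psi$ at height $\delta$ and reattach a tail ``at an additional energy cost bounded by $\int_0^\delta\sqrt{2c(\phi)}\,\d\phi$, which tends to $0$ as $\delta\to0$ after an initial reduction ensuring local boundedness of $c$ away from $0$.'' Local boundedness of $c$ on compact subsets of $(0,\infty)$ says nothing about $\int_0^\delta\sqrt{2c}$: the limit $z$ is only known as an a.e.\ pointwise limit of nonincreasing functions and could a priori blow up arbitrarily fast at $0$ (e.g.\ like $e^{1/\phi}$), in which case $\int_0^{\hat\phi}\sqrt{2\phi z(\phi)}\,\d\phi=\infty$ for every $\hat\phi>0$ and, by the lower bound in \cref{thm:ModicaMortolaTrick}, $\tau^z\equiv\infty$ on $(0,\infty)$ --- the construction collapses. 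Moreover, for the direction $\tau^z\geq\tau$ you must attach a tail built from $c_n$ (not from the limit $c$) to a competitor for $\tau^z$, so you need smallness of $\int_0^\delta\sqrt{2c_n}$ \emph{uniformly in $n$} in order to send $n\to\infty$ first and $\delta\to0$ afterwards. Establishing exactly these two facts is the technical core of the paper's proof (its Steps 2 and 3): one first shows, using the explicit coefficients $a_{i,n},\phi_{i,n}$ of \cref{thm:genUrbPln} and the convergence of the derivatives $\tau_n'$ to $\tau'$, that $\tau'(0)=\infty$ forces $z_n\to\infty$ near $0$ at a rate quantified by $\tau$; one then deduces $\lim_{\hat\phi\to0}\limsup_n 2\int_0^{\hat\phi}\sqrt{2z_n(\phi)\phi}\,\d\phi=0$ by exhibiting, for small masses $\hat w$ with $\tau(\hat w)\leq r$, near-minimizers of $F^{c_n}$ that must reach a definite height $\phi_r$, so that the Modica--Mortola lower bound traps $2\int_0^{\phi_r}\sqrt{2z_n(\phi)\phi}\,\d\phi$ below $\tau_n(\hat w)+\delta\leq r+\delta$. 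Without an argument of this kind your proof does not close.
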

\begin{proof}
For $n=1,2,\ldots$ we approximate $\tau$ by a piecewise affine transport cost $\tau_n\leq\tau$ with increasing approximation quality, for instance the piecewise affine interpolation
\begin{equation*}
\tau_n(w)=2^n\left[\left(w-\tfrac{i-1}{2^n}\right)\tau\left(\tfrac{i}{2^n}\right)+\left(\tfrac{i}{2^n}-w\right)\tau\left(\tfrac{i-1}{2^n}\right)\right]
\qquad\text{for }
w\in\left[\tfrac{i-1}{2^n},\tfrac{i}{2^n}\right],\,
i=1,2,\ldots
\end{equation*}
% \begin{equation*}
% \tau_n(w)=\tau'(i2^{-n})w+b_i
% \qquad\text{if }w\in[(i-1)2^{-n},i2^{-n}],
% \quad i=1,2,\ldots
% \end{equation*}
% with $b_1=0$ and $b_2,b_3,\ldots$ chosen such that the $i$th and $(i+1)$th line segment intersect at $w=i2^{-n}$, thus $b_i = (a_{i-1}-a_i)i2^{-n}+b_{i-1}$.
% \todo{%
% Apparently, $z_n$ converges pointwise monotonically to some $z$, $z_n\nearrow z$.
% }%\todo
% Another alternative is just piecewise linear interpolation of $\tau$ at points $i2^{-n}$.
Due to \cref{thm:genUrbPln}, this piecewise affine $\tau_n$ is induced via \eqref{eqn:inverseProblem} by a lower semi-continuous phase field cost $c_n$ with nonincreasing piecewise constant mass-specific phase field cost $z_n(\phi)=c_n(\phi)/\phi$.
The proof now proceeds in steps.

\emph{Step\,1.}
The sequence of functions $z_n$ converges (up to a subsequence) pointwise almost everywhere to a nonincreasing lower semi-continuous function $z$.
To show this, note that for any $\delta>0$ the functions $z_n$ are uniformly bounded on $[\delta,\frac1\delta)$.
Indeed, assume the opposite, then due to the monotonicity of $z_n$, for any $a>0$ one can find some $n$ such that $z_n\geq\hat z$ for the function $\hat z:[0,\infty)\to[0,\infty)$ with $\hat z(\phi)=a$ for $\phi\in[0,\delta)$ and $\hat z(\phi)=0$ else.
Thus with \cref{thm:monotonicityProperty} and \cref{thm:genUrbPln} we obtain
\begin{equation*}
\tau_n(w)
=\tau^{z_n}(w)
\geq\tau^{\hat z}(w)
% =\inf\left\{\int_\R\frac12|\psi'|^2+z_n(\psi)\psi\,\d x\,\middle|\,\int_\R\psi\,\d x=w\right\}\\
% \geq\inf\left\{\int_\R\frac12|\psi'|^2\,\d x+\int_{\psi\leq\delta}a\psi\,\d x\,\middle|\,\int_\R\psi\,\d x=w\right\}
=\min\{aw,4\sqrt{2a\delta^3}/3\}
\quad\text{for all }w\geq0\,,
\end{equation*}
a contradiction for $a$ large enough.
Exploiting again the monotonicity of the $z_n$, it follows that these functions are actually even uniformly bounded in $\BV((\delta,\frac1\delta))$, the space of functions with bounded variation,
so that a subsequence converges weakly-* in $\BV((\delta,\frac1\delta))$.
Upon extracting yet another subsequence we thus obtain pointwise convergence almost everywhere.
%Thus, another subsequence converges strongly in any $L^p$, $p>0$, or even pointwise almost everywhere.
Since $\delta>0$ was arbitrary, by a standard diagonal argument we obtain a subsequence converging almost everywhere to some $z:[0,\infty)\to[0,\infty)$.
The monotonicity of the $z_n$ now implies monotonicity of $z$, and a monotonous $\BV$-function differs from its lower semi-continuous envelope at most on a nullset so that $z$ may be assumed lower semi-continuous.
Below, the index $n$ always refers to the extracted subsequence.
The remainder of the proof shows $\tau=\tau^z$.

\emph{Step\,2.}
We will need the following property of $z$ (which requires the continuity of $\tau$),
\begin{equation*}
\tau'(0)=\infty
\qquad\text{implies}\qquad
\lim_{\phi\to0}z(\phi)=\infty\,.
\end{equation*}
Indeed, we show that for any $a>0$ there is some $\phi_a>0$ such that $z_n(\phi)\geq a$ for all $\phi\in(0,\phi_a)$ and $n$ large enough.
To this end first note that not only $\tau_n\to\tau$ in the supremum norm, but also that the right derivative $\tau_n'$ converges pointwise to the right derivative $\tau'$
(the choice of the right derivative is just for notational convenience; one could likewise work with the left derivative or the full superdifferential).
Now pick $w_r>w_l>0$ such that $\tau_n'(w_r)\geq a$ and $\tau_n'(w_l)\geq2\tau_n'(w_r)$ for all $n$ large enough (which is possible due to $\tau'(0)=\infty$ and the continuity of $\tau$ in $0$).
Denote the coefficients of $z_n$ from \cref{thm:genUrbPln} by $a_{i,n}$ and $\phi_{i,n}$ and let $k_n$ and $j_n$ be the indices such that $a_{k_n,n}=\tau_n'(w_l)$ and $a_{j_n,n}=\tau_n'(w_r)$.
We can then estimate
\begin{align*}
&\frac{4\sqrt2}3\sqrt{\tau_n'(w_l)-\tau_n'(w_r)}\phi_{{j_n}-1,n}^{3/2}\\
&=\frac{4\sqrt2}3\sqrt{a_{k_n,n}-a_{j_n,n}}\sum_{i=1}^{{j_n}-1}(\phi_{i,n}^{3/2}-\phi_{i-1,n}^{3/2})\\
&\geq\frac{4\sqrt2}3\sum_{i=1}^{{k_n}-1}\left(\sqrt{a_{i,n}-a_{j_n,n}}-\sqrt{a_{i,n}-a_{k_n,n}}\right)(\phi_{i,n}^{3/2}-\phi_{i-1,n}^{3/2})+\frac{4\sqrt2}3\sum_{i={k_n}}^{{j_n}-1}\sqrt{a_{i,n}-a_{j_n,n}}(\phi_{i,n}^{3/2}-\phi_{i-1,n}^{3/2})\\
&=\frac{4\sqrt2}3\sum_{i=1}^{{j_n}-1}\sqrt{a_{i,n}-a_{j_n,n}}(\phi_{i,n}^{3/2}-\phi_{i-1,n}^{3/2})-\frac{4\sqrt2}3\sum_{i=1}^{{k_n}-1}\sqrt{a_{i,n}-a_{k_n,n}}(\phi_{i,n}^{3/2}-\phi_{i-1,n}^{3/2})\\
&=(\tau_n(w_r)-\tau_n'(w_r)w_r) - (\tau_n(w_l)-\tau_n'(w_l)w_l)\,,
\end{align*}
where we used the subadditivity $\sqrt{\alpha}-\sqrt{\beta}\leq\sqrt{\alpha-\beta}$ for all $\alpha\geq\beta\geq0$.
This implies
\begin{multline*}
\phi_{j_n-1,n}^{3/2}\geq\frac3{4\sqrt2}\frac{(\tau_n(w_r)-\tau_n'(w_r)w_r) - (\tau_n(w_l)-\tau_n'(w_l)w_l)}{\sqrt{\tau_n'(w_l)-\tau_n'(w_r)}}
\longrightarrow2\phi_a^{3/2}
\quad\text{as }n\to\infty\\
\text{for }\phi_a=\left(\frac3{8\sqrt2}\frac{(\tau(w_r)-\tau'(w_r)w_r) - (\tau(w_l)-\tau'(w_l)w_l)}{\sqrt{\tau'(w_l)-\tau'(w_r)}}\right)^{2/3}>0\,.
\end{multline*}
Thus, for all $n$ large enough we have
\begin{equation*}
\inf\{z_n(\phi)\,|\,\phi\leq\phi_a\}
\geq z_n(\phi_a)
\geq z_n(\phi_{j_n-1,n})
=a_{j_n,n}
=\tau_n'(w_r)
\geq a
\end{equation*}
so that also $z(\phi)\geq a$ for all $\phi\in(0,\phi_a)$.

\emph{Step\,3.}
For later use we show
\begin{equation*}
\lim_{\hat\phi\to0}\limsup_{n\to\infty}I_n(\hat\phi)=0
\qquad\text{ for }I_n(\hat\phi)=2\int_0^{\hat\phi}\sqrt{2z_n(\phi)\phi}\,\d\phi\,,
\end{equation*}
which due to $2\int_0^{\hat\phi}\sqrt{2z(\phi)\phi}\,\d\phi\leq\liminf_{n\to\infty}I_n(\hat\phi)$ by Fatou's lemma automatically also implies
\begin{equation*}
\lim_{\hat\phi\to0}I(\hat\phi)=0
\qquad\text{ for }I(\hat\phi)=2\int_0^{\hat\phi}\sqrt{2z(\phi)\phi}\,\d\phi\,.
\end{equation*}
The former limit makes use of the previous step and thus requires continuity of $\tau$ in $0$ (while the latter could also be obtained without).
For the proof, first note that if $\tau'(0)<\infty$,
then all $z_n$ are uniformly bounded above by $\tau'(0)$ so that the desired statement trivially holds.
Hence, in the following we assume $\tau'(0)=\infty$.
Now for arbitrary $r>0$ we will show existence of some $\phi_r>0$ and $N>0$ with $I_n(\phi_r)\leq r$ for all $n\geq N$,
which by the arbitrariness of $r$ and the monotonicity of $I_n$ implies the desired statement.
To this end pick $\varphi>0$ such that $z(\varphi)>r/\tau^{-1}(r)$ and $\phi_r>0$ such that $z(\phi_r)>2z(\varphi)$
(such $\varphi$ and $\phi_r$ exist by the previous step).
Furthermore, let $N,K>0$ such that
\begin{equation*}
z_n(\varphi)\geq K>r/\tau^{-1}(r)
\quad\text{and}\quad
z_n(\phi_r)\geq2z_n(\varphi)
\qquad\text{for all }n>N\,.
\end{equation*}
Letting $\hat w$ denote the (unique) point such that $\tau(\hat w)=K\hat w$, we now have $\hat w\leq\tau^{-1}(r)$ and thus
\[\tau(\hat w)\leq r\,.\]
Furthermore, $\tau(\hat w)=K\hat w\leq z_n(\varphi)\hat w$ implies for any $\delta>0$ the existence of a phase field function $\psi$ with
\begin{equation*}
\max\{\psi(y)\,|\,y\in\R\}\geq\phi_r
\quad\text{as well as}\quad
\int_\R\psi\,\d y=\hat w
\quad\text{and}\quad
F^{c_n}[\psi]\leq\tau_n(\hat w)+\delta
\end{equation*}
(where without loss of generality the maximum is achieved in $y=0$):
Indeed, a $\psi$ satisfying the latter two exists by definition of $\tau_n$, and if $\psi(y)<\phi_r$ for all $y\in\R$, then by the monotonicity of $z_n$ we have
\begin{equation*}
\tau_n(\hat w)\geq F^{c_n}[\psi]>z_n(\phi_r)\int_\R\psi\,\d y\geq2z_n(\varphi)\hat w\geq2\tau(\hat w)\geq2\tau_n(\hat w)\,,
\end{equation*}
which is a contradiction.
For such a $\psi$ \cref{thm:ModicaMortolaTrick} implies
\begin{equation*}
I_n(\phi_r)
\leq\int_{-\infty}^0\frac12|\psi'|^2+\psi z(\psi)\,\d y+\int_0^{\infty}\frac12|\psi'|^2+\psi z(\psi)\,\d y
=F^{c_n}[\psi]
\leq\tau_n(\hat w)+\delta
\leq\tau(\hat w)+\delta
\leq r+\delta\,,
\end{equation*}
where $\delta$ was arbitrary, thus $I_n(\phi_r)\leq r$ for all $n\geq N$.

\emph{Step\,4.}
Let $\tau^{z}$ be the cost induced by $c(\psi)=z(\psi)\psi$ via \eqref{eqn:inverseProblem}.
It remains to show $\tau^{z}\leq\tau$ and $\tau^{z}\geq\tau$.
As for the former, fix $\delta>0$ and $b\in(1,1+\delta]$.
By \cref{thm:rescaling}, for $n$ large enough we have
\begin{align*}
b^3z_n(\phi/b)&\geq z(\phi)\quad\text{for all }\phi\in[\delta,1/\delta)\,.
%b^3\tau_n(w)&\geq\tau(w)\quad\text{for all }w\in[\delta,\infty)\,.
\end{align*}
Note that
\begin{equation*}
\tau^{\tilde z_n}(w)=b^3\tau_n(w)
\quad\text{for}\quad
\tilde z_n(\phi)=b^3z_n(\phi/b)\,,\:
\tilde c_n(\phi)=\phi\tilde z_n(\phi)\,,
\end{equation*}
which can either be seen directly from \cref{thm:genUrbPln} or from the identity
\begin{equation*}
F^{\tilde c_n}[\varphi]
=\int_\R\frac12|\varphi'|^2+b^3z_n(\varphi/b)\varphi\,\d x
=b^3\int_\R\frac12|\psi'|^2+z_n(\psi)\psi\,\d x
=b^3F^{c_n}[\psi]
\end{equation*}
for all $\varphi:\R\to[0,\infty)$ and $\psi=\varphi(\cdot/b)/b$, which automatically have same mass $\int_\R\varphi\,\d y=\int_\R\psi\,\d y$.
Now
% let $[r,R)\supset(\delta,1/\delta)$ be the maximal half-open interval such that
% \begin{equation*}
% b^3z_n(\psi/b)\geq z(\psi)\quad\text{for all }\psi\in[r,R)
% \end{equation*}
% and
define
\begin{equation*}
\hat c(\phi)=\hat z(\phi)\phi
\quad\text{ for }\quad
\hat z(\phi)=\begin{cases}
\tilde z_n(\phi)&\text{if }\phi\in[\delta,1/\delta),\\
z(\phi)&\text{else.}
\end{cases}
\end{equation*}
By \cref{thm:monotonicityProperty} we have $\tau^{\hat z}\geq\tau^{z}$.
We aim to show
\begin{equation*}
\tau^{\hat z}(w)\leq\tau^{\tilde z_n}(w)+I(\delta)+\delta
\qquad\text{for all }w\geq0\text{ with }w\tau(w)\leq\frac1{8\delta^3}\,,
\end{equation*}
since this implies
\begin{equation*}
\tau^{z}(w)\leq\tau^{\hat z}(w)\leq b^3\tau_n(w)+I(\delta)+\delta\leq b^3\tau(w)+I(\delta)+\delta
\qquad\text{for all }w\geq0\text{ with }w\tau(w)\leq\frac1{8\delta^3}\,,
\end{equation*}
which by the arbitrariness of $\delta>0$ and $b\in(1,1+\delta]$ yields $\tau^{z}(w)\leq\tau(w)$ for all $w\geq0$.
To this end it suffices to construct a phase field function $\psi$ of mass $w$ (with $w\tau(w)\leq\frac1{8\delta^3}$) which satisfies $F^{\hat c}[\psi]\leq\tau^{\tilde z_n}(w)+I(\delta)+\delta$.
Since $\hat c$ is just a modification of $\tilde c_n$,
we start from a phase field function $\tilde\psi:\R\to[0,\infty)$ with mass $\int_\R\tilde\psi\,\d y=w$ and phase field energy $F^{\tilde c_n}[\tilde\psi]=\tau^{\tilde z_n}(w)+\delta/2$,
where by \cref{thm:phaseFieldProperties} we may assume $\tilde\psi$ to be even and decreasing on $[0,\infty)$.
Note that we may assume $\tilde\psi(0)<1/\delta$ since otherwise \cref{thm:energyEstimate} would imply $F^{\tilde c_n}[\tilde\psi]\geq\frac1{4w\delta^3}\geq2\tau(w)$,
which for $\delta$ small enough would be a contradiction.
Unfortunately, $F^{\hat c}[\tilde\psi]$ might be infinite, since $\hat c$ differs from $\tilde c_n$ for small values of $\tilde\psi$.
Thus we need to modify $\tilde\psi$.
Let $t=\min\{y\geq0\,|\,\tilde\psi(y)\leq\delta\}$ and choose $T>t$ and $\psi_\delta:[t,T]\to[0,\delta]$ monotonically decreasing such that
\begin{equation*}
\psi_\delta(t)=\delta
\quad\text{ and }\quad
\psi_\delta(T)=0
\quad\text{ as well as }\quad
\int_t^T\frac12|\psi_\delta'|^2+z(\psi_\delta)\psi_\delta\,\d x\leq
%\int_0^{\delta}\sqrt{2z(\psi)\psi}\,\d\psi+\delta/4=
I(\delta)/2+\delta/4
\end{equation*}
($T$ and $\psi_\delta$ exist by \cref{thm:ModicaMortolaTrick}).
We now assemble a new phase field function by
\begin{equation*}
\psi(y)=\begin{cases}
\tilde\psi(y)&\text{if }y\in(-t,t),\\
\psi_\delta(|y|)&\text{else.}
\end{cases}
\end{equation*}
If $\int_\R\psi\,\d x=w$, then indeed
\begin{multline*}
% \tau^{\hat z}(w)
% \leq\int_\R\frac12|\psi'|^2+\hat z(\psi)\psi\,\d y
F^{\hat c}[\psi]
=2\int_t^T\frac12|\psi_\delta'|^2+z(\psi_\delta)\psi_\delta\,\d y+\int_{-t}^t\frac12|\tilde\psi'|^2+\tilde c_n(\tilde\psi)\,\d y
\leq I(\delta)+\tfrac\delta2+F^{\tilde c_n}[\tilde\psi]-2\int_t^\infty\frac12|\tilde\psi'|^2+\tilde c_n(\tilde\psi)\,\d y\\
\leq I(\delta)+\delta+\tau^{\tilde z_n}(w)-2\int_t^\infty\frac12|\tilde\psi'|^2+\tilde c_n(\tilde\psi)\,\d y\,.
\end{multline*}
If $\int_\R\psi\,\d y>w$, we simply cut out a symmetric segment around $y=0$ to regain mass $w$, thereby reducing $F^{\hat c}[\psi]$ even further.
If on the other hand $\int_\R\psi\,\d y=w-\Delta w$ for some $\Delta w>0$,
we insert a segment of value $\tilde\psi(0)$ and width $\Delta w/\tilde\psi(0)$ at $y=0$ to regain mass $w$,
which inceases $F^{\hat c}[\psi]$ by
\begin{multline*}
\tilde c_n(\tilde\psi(0))\frac{\Delta w}{\tilde\psi(0)}
=\tilde z_n(\tilde\psi(0))\Delta w
<\tilde z_n(\tilde\psi(0))2\int_t^\infty\tilde\psi\,\d y\\
\leq2\int_t^\infty\tilde z_n(\tilde\psi)\tilde\psi\,\d y
\leq2\int_t^\infty\frac12|\tilde\psi'|^2+ \tilde c_n(\tilde\psi)\,\d y\,,
\end{multline*}
where we used $\Delta w=2\int_t^\infty\tilde\psi\,\d y-2\int_t^T\psi_\delta\,\d y<2\int_t^\infty\tilde\psi\,\d y$.
Summarizing, in all cases $F^{\hat c}[\psi]\leq\tau^{\tilde z_n}(w)+I(\delta)+\delta$, as desired.

\emph{Step\,5.}
The remaining inequality $\tau^{z}\geq\tau$ is shown analogously.
Fix $\delta>0$ and $b\in(1-\delta,1]$, then by \cref{thm:rescaling}, for all $n$ large enough we have
\begin{align*}
b^3z_n(\phi/b)&\leq z(\phi)+\delta
\qquad\text{for all }\phi\in[\delta,1/\delta)\,,\\
\tau_n(w)&\geq\tau(w)-\delta
\qquad\text{for all }w\in[\delta,\infty)\text{ with }w\tau(w)\leq1/(8\delta^3)\,.
\end{align*}
Note that the phase field cost $\phi\mapsto(z(\phi)+\delta)\phi$ induces the transport cost $w\mapsto\tau^{z}(w)+\delta w$.
Again abbreviate $\tilde z_n(\phi)=b^3z_n(\phi/b)$ and $\tilde c_n(\phi)=\tilde z_n(\phi)\phi$ and define
\begin{equation*}
\hat c(\phi)=\hat z(\phi)\phi
\quad\text{ for }\quad
\hat z(\phi)=\begin{cases}
\tilde z_n(\phi)&\text{if }\phi\in[\delta,1/\delta),\\
z(\phi)+\delta&\text{else,}
\end{cases}
\end{equation*}
then by \cref{thm:monotonicityProperty} we have $\tau^{\hat z}(w)\leq\tau^{z}(w)+\delta w$ for all $w\geq0$.
For arbitrary $w\geq\delta$ with $w\tau(w)\leq1/(8\delta^3)$ we now again seek some phase field function $\psi:\R\to[0,\infty)$ with mass $\int_\R\psi\,\d y=w$ and
\begin{equation*}
F^{\tilde c_n}[\psi]
%b^3\tau_n(w)
\leq\tau^{\hat z}(w)+b^{5/2}I_n(\delta/b)+\delta\,,
\end{equation*}
since this implies $b^3\tau_n(w)=\tau^{\tilde z_n}(w)\leq\tau^{\hat z}(w)+b^{5/2}I_n(\delta/b)+\delta$ and thus
\begin{equation*}
\tau^{z}(w)\geq\tau^{\hat z}(w)-\delta w\geq b^3\tau_n(w)-b^{5/2}I_n(\delta/b)-\delta-\delta w\geq b^3\tau(w)-b^3\delta-b^{5/2}I_n(\delta/b)-\delta-\delta w
\qquad\text{for all }w\geq\delta\text{ with }w\tau(w)\leq\frac1{8\delta^3}\,,
\end{equation*}
which by the arbitrariness of $\delta>0$ and step\,3 finally implies $\tau^{z}\geq\tau$.
We start from a phase field function $\hat\psi:\R\to[0,\infty)$ with mass $\int_\R\hat\psi\,\d y=w$ and phase field energy $F^{\hat c}[\hat\psi]=\tau^{\hat z}(w)+\delta/2$,
where again as in the previous step we may assume $\hat\psi$ to be even and decreasing on $[0,\infty)$ with $\hat\psi(0)<1/\delta$.
We now modify $\hat\psi$ where it takes values smaller than $\delta$ to account for the fact that $\hat c$ differs from $\tilde c_n$ in that region.
To this end let $t=\min\{y\geq0\,|\,\hat\psi(y)\leq\delta\}$ and choose $T>t$ and $\psi_\delta:[t,T]\to[0,\delta]$ monotonically decreasing such that
\begin{equation*}
\psi_\delta(t)=\delta
\quad\text{ and }\quad
\psi_\delta(T)=0
\quad\text{ as well as }\quad
\int_t^T\frac12|\psi_\delta'|^2+\tilde c_n(\psi_\delta)\,\d x\leq\int_0^{\delta}\sqrt{2\tilde c_n(\psi)}\,\d\psi+\delta/4=b^{5/2}I_n(\delta/b)/2+\delta/4\,.
\end{equation*}
We now assemble a new phase field function by
\begin{equation*}
\psi(y)=\begin{cases}
\hat\psi(y)&\text{if }y\in(-t,t),\\
\psi_\delta(|y|)&\text{else.}
\end{cases}
\end{equation*}
If $\int_\R\psi\,\d x=w$, then indeed
\begin{multline*}
% \tau^{\hat z}(w)
% \leq\int_\R\frac12|\psi'|^2+\hat z(\psi)\psi\,\d y
F^{\tilde c_n}[\psi]
=2\int_t^T\frac12|\psi_\delta'|^2+\tilde c_n(\psi_\delta)\,\d y+\int_{-t}^t\frac12|\hat\psi'|^2+\hat c(\hat\psi)\,\d y
\leq b^{5/2}I_n(\delta/b)+\tfrac\delta2+F^{\hat c}[\hat\psi]-2\int_t^\infty\frac12|\hat\psi'|^2+\hat c(\hat\psi)\,\d y\\
\leq b^{5/2}I_n(\delta/b)+\delta+\tau^{\hat z}(w)-2\int_t^\infty\frac12|\hat\psi'|^2+\hat c(\hat\psi)\,\d y\,.
\end{multline*}
If $\int_\R\psi\,\d y>w$, we again cut out a symmetric segment around $y=0$ to regain mass $w$, thereby reducing $F^{\tilde c_n}[\psi]$ even further.
If on the other hand $\int_\R\psi\,\d y=w-\Delta w$ for some $\Delta w>0$,
we insert a segment of value $\hat\psi(0)$ and width $\Delta w/\hat\psi(0)$ at $y=0$ to regain mass $w$,
which inceases $F^{\tilde c_n}[\psi]$ by
\begin{equation*}
\tilde c_n(\hat\psi(0))\frac{\Delta w}{\hat\psi(0)}
=\hat z(\hat\psi(0))\Delta w
<\hat z(\hat\psi(0))2\int_t^\infty\hat\psi\,\d y
\leq2\int_t^\infty\hat z(\hat\psi)\hat\psi\,\d y
\leq2\int_t^\infty\frac12|\hat\psi'|^2+ \hat c(\hat\psi)\,\d y\,,
\end{equation*}
where we used $\Delta w=2\int_t^\infty\hat\psi\,\d y-2\int_t^T\psi_\delta\,\d y<2\int_t^\infty\hat\psi\,\d y$.
Summarizing, in all cases $F^{\tilde c_n}[\psi]\leq\tau^{\hat z}(w)+b^{5/2}I_n(\delta/b)+\delta$, as desired.
\end{proof}

\begin{remark}[Nonuniqueness of phase field cost]
If $\tau$ is nondifferentiable in some point $w>0$ (that is, its superdifferential is set-valued) there can be multiple solutions to \cref{pbm:inverseProblem}.
An example for two phase field costs $c$ inducing the same nondifferentiable $\tau$ is given in \cref{exm:urbanPlanning,exm:urbanPlanningII}.
The role of differentiability will become clear in \cref{thm:necessaryCondition,thm:sufficientCondition},
where we will see that \eqref{eqn:convolutionProblem} for all $t\geq0$ is necessary and sufficient only for differentiable $\tau$.
%Indeed, we will see in \cref{thm:necessaryCondition,thm:sufficientCondition} that \eqref{eqn:convolutionProblem} for all $t\geq0$ is only a sufficient condition in this case (else it is also necessary by \cref{thm:necessaryCondition}).
\end{remark}

%\todo{Is the linear deconvolution problem uniquely solvable if it holds for all $t$? Then the above would be the only possibility for nonuniqueness.}

After having shown the existence of a phase field cost $c$ inducing a given admissible transport cost $\tau$, we gather some first a priori estimates on $c$ in the next two theorems.
Those will later be used in deriving the equivalent linear inverse problem \eqref{eqn:convolutionProblem}.

\begin{theorem}[Slopes of transport cost]\label{thm:aPrioriEstimateI}
Let $z(\phi)=c(\phi)/\phi$ for $\phi>0$ be nonincreasing. Denoting the right derivative by $(\cdot)'$ we have
\begin{equation*}
(\tau^z)'(0)=\lim_{\phi\searrow0}z(\phi)
\qquad\text{and}\qquad
\lim_{w\to\infty}(\tau^z)'(w)=\lim_{\phi\to\infty}z(\phi)\,.
\end{equation*}
\end{theorem}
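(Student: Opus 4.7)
The plan is to reduce both equalities to the statement that the average slope $\tau^z(w)/w$ converges to $a_0:=\lim_{\phi\searrow0}z(\phi)$ as $w\searrow0$ and to $a_\infty:=\lim_{\phi\to\infty}z(\phi)$ as $w\to\infty$, and then to verify these two limits by explicit trial profiles together with \cref{thm:energyEstimate} and \cref{thm:ModicaMortolaTrick}. By the forthcoming \cref{thm:tauProperties} the function $\tau^z$ is admissible, hence concave with $\tau^z(0)=0$, so $\tau^z(w)/w$ is nonincreasing; this identifies the right derivative at $0$ with $\lim_{w\searrow0}\tau^z(w)/w$, and writing $\tau^z(w)=\int_0^w(\tau^z)'(s)\,\d s$ with $(\tau^z)'$ nonincreasing identifies $\lim_{w\to\infty}(\tau^z)'(w)$ with $\lim_{w\to\infty}\tau^z(w)/w$ as well.

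For the limit at $0$, the upper bound follows from the tent profile $\psi_\eta(y)=w\eta(1-\eta|y|)_+$, which has mass $w$, gradient energy $w^2\eta^3\to 0$, and maximum $w\eta\to 0$ as $\eta\searrow0$; since $z\leq a_0$ by monotonicity, $F^c[\psi_\eta]\leq w^2\eta^3+a_0 w$, so $\tau^z(w)\leq a_0 w$ (vacuous if $a_0=\infty$). For the matching lower bound I pick, for each $w$, a near-optimal $\psi$ with $F^c[\psi]\leq\tau^z(w)+w^2$. Monotonicity of $z$ gives $F^c[\psi]\geq z(\hat\psi)\,w$ with $\hat\psi=\max\psi$, while \cref{thm:energyEstimate} gives $\hat\psi^3\leq 4wF^c[\psi]$. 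If $a_0<\infty$ the upper bound then forces $\hat\psi=O(w^{2/3})\to 0$, hence $\tau^z(w)/w+w\geq z(\hat\psi)\to a_0$; if $a_0=\infty$, any hypothetical subsequence with $\tau^z(w_k)/w_k$ bounded would likewise force $\hat\psi_k\to 0$ and then $z(\hat\psi_k)\to\infty$, yielding a contradiction.

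For the limit at $\infty$, the lower bound $\tau^z(w)\geq a_\infty w$ is immediate from $\int_\R z(\psi)\psi\,\d y\geq a_\infty\int_\R\psi\,\d y$. For the upper bound, given $\epsilon>0$ I pick $\phi^*$ with $z(\phi^*)\leq a_\infty+\epsilon$ and, via \cref{thm:ModicaMortolaTrick}, a tail profile $\psi_0:[0,T_0]\to[0,\phi^*]$ with $\psi_0(0)=\phi^*$, $\psi_0(T_0)=0$, and finite mass $m_0$ and energy $E_0$. Gluing two mirrored tails to a central plateau of height $\phi^*$ and length $L=(w-2m_0)/(2\phi^*)$ produces, for all $w>2m_0$, a profile of mass $w$ and energy $z(\phi^*)(w-2m_0)+2E_0$, so $\limsup_{w\to\infty}\tau^z(w)/w\leq z(\phi^*)\leq a_\infty+\epsilon$, and $\epsilon$ is arbitrary. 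The main obstacle is the degenerate case $a_0=\infty$ in the lower bound at $0$, where the two ingredients $F^c[\psi]\geq z(\hat\psi)w$ and $\hat\psi^3\leq 4wF^c[\psi]$ must be combined in a single contradiction argument, since there is no a priori upper bound on $\tau^z(w)/w$ to feed into the geometric estimate.
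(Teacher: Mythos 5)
Your proof is correct and follows essentially the same route as the paper: a tent profile for the upper bound of the slope at $0$, the a priori estimate of \cref{thm:energyEstimate} combined with $F^c[\psi]\geq z(\hat\psi)w$ for the lower bound at $0$, the trivial inequality $z\geq a_\infty$ for the lower bound at $\infty$, and insertion of a long plateau at a height where $z$ is close to $a_\infty$ for the upper bound at $\infty$. Your repackaging via the limits of $\tau^z(w)/w$ (using concavity from \cref{thm:tauProperties}, which the paper's own argument also relies on) is only a cosmetic difference from the paper's direct treatment of the one-sided derivatives.
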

\begin{proof}
We proceed in four steps.
In the following we simply write $\tau$ for $\tau^z$.

$\tau'(0)\leq\lim_{\psi\searrow0}z(\psi)$:
First note that the limit is well-defined (since $z$ is decreasing).
Assume the right-hand side to be finite (otherwise there is nothing to show).
Given $w,\delta>0$, we define $\psi_\delta^w(y)=\delta\max\{0,1-\delta|y|/w\}$ and obtain
\begin{equation*}
\tau(w)
\leq F^c[\psi_\delta^w]
=\delta^3/w+\int_\R z(\psi_\delta^w)\psi_\delta^w\,\d y
\leq\delta^3/w+w\lim_{\phi\searrow0}z(\phi)\,.
\end{equation*}
Letting $\delta\to0$ we obtain $\tau(w)\leq w\lim_{\phi\searrow0}z(\phi)$, which together with $\tau\geq0$ implies the desired result.

$\tau'(0)\geq\lim_{\phi\searrow0}z(\phi)$:
By \cref{thm:energyEstimate}, for any $\hat\phi>0$ we can find $\delta>0$ such that for all $w\leq\delta$ and all phase fields $\psi_w:\R\to[0,\infty)$ with mass $\int_\R\psi_w\,\d y=w$ and phase field energy $F^c[\psi_w]\leq2\tau(w)$
we have $\psi_w(y)\leq\hat\phi$ for almost all $y\in\R$.
Thus, those $\psi_w$ also satisfy $F^c[\psi_w]\geq z(\hat\phi)\int_\R\psi_w\,\d y=z(\hat\phi)w$ so that
\begin{equation*}
\tau(w)
=\inf\left\{F^c[\psi_w]\,\middle|\,\psi_w:\R\to[0,\infty),\,\int_\R\psi_w\,\d y=w,\,F^c[\psi_w]\leq2\tau(w)\right\}
\geq z(\hat\phi)w
\end{equation*}
for all $w\leq\delta$.
Consequently, $\tau'(0)=\lim_{w\to0}\tau(w)/w\geq z(\hat\phi)$.
The arbitrariness of $\hat\phi>0$ implies the desired inequality.

$\lim_{w\to\infty}\tau'(w)\geq\lim_{\phi\to\infty}z(\phi)$:
Abbreviate $a=\lim_{\phi\to\infty)}z(\phi)$ and define $\tilde z(\phi)=a$ as well as $\tilde c(\phi)=\tilde z(\phi)\phi$.
Using \cref{thm:genUrbPln,thm:monotonicityProperty}, we obtain $aw=\tau^{\tilde z}(w)\leq\tau(w)$ for all $w\geq0$,
which implies the desired inequality due to the concavity of $\tau$.

$\lim_{w\to\infty}\tau'(w)\leq\lim_{\phi\to\infty}z(\phi)$:
Assume to the contrary that there exists $\hat\phi$ with $z(\hat\phi)=b<\lim_{w\to\infty}\tau'(w)$.
Now let $\psi$ be a phase field with maximum value $\hat\phi$ at $y=0$, finite cost $F^c[\psi]>0$ and mass $\int_\R\psi\,\d y=W>0$.
By squeezing in a constant segment with value $\hat\phi$ and width $(w-W)/\hat\phi$ at $y=0$ we obtain a new phase field $\psi_w$ with mass $w=\int_\R\psi_w\,\d y>W$. Consequently,
\begin{equation*}
\tau(w)
\leq F^c[\psi_w]
=F^c[\psi]+(w-W)b
\quad\text{for all }w>W\,,
\end{equation*}
however, this contradicts $b<\lim_{w\to\infty}\tau'(w)$ due to the concavity of $\tau$.
\end{proof}

\begin{theorem}[A priori estimates on $c$]
Let $z(\phi)=c(\phi)/\phi$ for $\phi>0$ be nonincreasing, and let $c$ induce an admissible transport cost $\tau$ via \eqref{eqn:inverseProblem}. Then
\begin{equation*}
\int_0^1\sqrt{c(\phi)}\,\d\phi<\infty\,,
\qquad\text{wich implies}\qquad
\liminf_{\phi\searrow0}z(\phi)\phi^3=0\,.
\end{equation*}
\end{theorem}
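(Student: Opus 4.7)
My plan is to first reduce to the case $\tau'(0)=\infty$. If $\tau'(0)<\infty$, then \cref{thm:aPrioriEstimateI} gives $\lim_{\phi\to0}z(\phi)=\tau'(0)<\infty$, so $z$ is bounded and $c(\phi)=z(\phi)\phi\leq\tau'(0)\phi$, whence $\int_0^1\sqrt{c(\phi)}\,\d\phi\leq\sqrt{\tau'(0)}\int_0^1\sqrt\phi\,\d\phi<\infty$. So from now on I assume $\tau'(0)=\infty$, equivalently $z(\phi)\to\infty$ as $\phi\to0$.

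In this case I would adapt Step\,3 of the proof of \cref{thm:existence}, but work directly with $c$ and $\tau$ rather than with an approximating sequence. Fix any $r>0$. Since $z\to\infty$ at $0$, I choose first $\varphi>0$ with $z(\varphi)>r/\tau^{-1}(r)$ and then $\phi_r\in(0,\min\{\varphi,1\})$ with $z(\phi_r)>2z(\varphi)$. By \cref{thm:aPrioriEstimateI} the ratio $\tau(w)/w$ decreases from $\tau'(0)=\infty$ to $\lim_{\phi\to\infty}z(\phi)\leq r/\tau^{-1}(r)<z(\varphi)$, so there is some $\hat w>0$ with $\tau(\hat w)=z(\varphi)\hat w$. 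Since $\tau(w)/w$ is nonincreasing (by concavity of $\tau$ and $\tau(0)=0$) and $z(\varphi)>r/\tau^{-1}(r)=\tau(\tau^{-1}(r))/\tau^{-1}(r)$, I conclude $\hat w<\tau^{-1}(r)$, so in particular $\tau(\hat w)\leq r$.

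I next pick a near-minimizer $\psi$ for mass $\hat w$ with $F^c[\psi]\leq\tau(\hat w)+\delta$ for some $\delta\in(0,\tau(\hat w))$, and invoke \cref{thm:phaseFieldProperties} to assume $\psi$ is even, decreasing on $[0,\infty)$, and tends to $0$ at infinity. The key claim is $\psi(0)\geq\phi_r$: otherwise $z(\psi)\geq z(\phi_r)>2z(\varphi)$ almost everywhere, so that
\[
F^c[\psi]\geq\int_\R z(\psi)\psi\,\d y\geq z(\phi_r)\hat w>2z(\varphi)\hat w=2\tau(\hat w),
\]
contradicting $F^c[\psi]\leq\tau(\hat w)+\delta<2\tau(\hat w)$. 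Applying the lower-bound half of \cref{thm:ModicaMortolaTrick} to the restriction of $\psi$ to $[0,\infty)$ then gives $F^c[\psi]\geq 2\int_0^{\psi(0)}\sqrt{2c(\phi)}\,\d\phi\geq 2\sqrt2\int_0^{\phi_r}\sqrt{c(\phi)}\,\d\phi$, so $\int_0^{\phi_r}\sqrt c\leq(r+\delta)/(2\sqrt2)<\infty$. Since $c(\phi)=z(\phi)\phi\leq z(\phi_r)$ on $[\phi_r,1]$ by monotonicity of $z$, this combines to $\int_0^1\sqrt c\,\d\phi<\infty$.

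The second assertion is then immediate by contradiction: if $\liminf_{\phi\to0}z(\phi)\phi^3>0$, there would exist $\varepsilon,\phi_0>0$ with $z(\phi)\phi^3\geq\varepsilon$ on $(0,\phi_0)$, hence $\sqrt{c(\phi)}=\sqrt{z(\phi)\phi}\geq\sqrt\varepsilon/\phi$ there, and $\int_0^{\phi_0}\sqrt c\,\d\phi\geq\sqrt\varepsilon\int_0^{\phi_0}\d\phi/\phi=\infty$ would contradict the integrability just established. The one genuinely delicate point of the whole argument is the coupled choice of $(\varphi,\phi_r,\hat w)$ which guarantees that near-minimizers for mass $\hat w$ are \emph{forced} to climb above $\phi_r$; everything else reduces to monotonicity, concavity, and a one-line Modica--Mortola computation.
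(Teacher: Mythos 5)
Your argument is correct, but it is considerably more elaborate than the one in the paper, even though both hinge on the same tool (the lower bound of \cref{thm:ModicaMortolaTrick} applied to a near-minimizer that is even and decreasing by \cref{thm:phaseFieldProperties}). The paper argues by contradiction in two lines: if $\int_0^1\sqrt{2c(\phi)}\,\d\phi=\infty$, then by monotonicity of $z$ the divergence sits at $\phi=0$, so $\int_0^{\hat\phi}\sqrt{2c(\phi)}\,\d\phi=\infty$ for \emph{every} $\hat\phi>0$; since any phase field of positive mass has positive maximum $\hat\phi$, the Modica--Mortola bound forces $F^c[\psi]=\infty$ for all competitors and hence $\tau\equiv\infty$ on $(0,\infty)$, contradicting admissibility. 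You instead import the quantitative machinery of Step\,3 of the proof of \cref{thm:existence} (the coupled choice of $\varphi$, $\phi_r$, $\hat w$ forcing near-minimizers for mass $\hat w$ to climb above $\phi_r$) to obtain the explicit bound $2\sqrt2\int_0^{\phi_r}\sqrt{c(\phi)}\,\d\phi\le r+\delta$; this is a valid direct proof and yields slightly more (a quantitative modulus relating $r$ and $\phi_r$, essentially the statement $\lim_{\hat\phi\to0}I(\hat\phi)=0$ of Step\,3), but for the theorem as stated that extra information follows anyway from absolute continuity of the integral once finiteness is known, so the added effort buys nothing here. Two cosmetic points: you should restrict $r$ to the range of $\tau$ so that $\tau^{-1}(r)$ is defined (one such $r$ suffices), and note that your preliminary reduction via \cref{thm:aPrioriEstimateI} to the case $\tau'(0)=\infty$ is unnecessary in the paper's contradiction argument. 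Your derivation of the second assertion ($z(\phi)\phi^3\ge\varepsilon$ would give $\sqrt{c(\phi)}\ge\sqrt\varepsilon/\phi$, which is non-integrable) is exactly the intended, omitted step and is correct.
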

\begin{proof}
For given phase field function $\psi:\R\to[0,\infty)$, let $\hat\phi=\max\{\psi(y)\,|\,y\in\R\}$ denote its maximum (which without loss of generality it achieves at $y=0$).
By \cref{thm:ModicaMortolaTrick},
\begin{equation*}
F^c[\psi]
=\int_{-\infty}^0\frac12|\psi'|^2+c(\psi)\,\d y+\int_0^{\infty}\frac12|\psi'|^2+c(\psi)\,\d y
%\geq\int_\R\sqrt{2c(\psi)}|\psi'|\,\d y
\geq2\int_0^{\hat\phi}\sqrt{2c(\phi)}\,\d\phi\,.
\end{equation*}
Thus, if the right-hand side were infinite for $\hat\phi=1$ (and thus for all $\hat\phi>0$ due to the monotonicity of $z$), then so would be $\tau$.
\end{proof}

\section{Obtainable transport costs $\tau$}\label{sec:obtainableCosts}
In this section we show that any transport cost $\tau:[0,\infty)\to[0,\infty)$ induced by a phase field cost $c:[0,\infty)\to[0,\infty)$ via \eqref{eqn:inverseProblem} is actually an admissible transport cost.
Combining this result with \cref{thm:existence}, any phase field cost $\tilde c:[0,\infty)\to[0,\infty)$ can be replaced by an equivalent phase field cost $c:[0,\infty)\to[0,\infty)$ such that $z(\psi)=c(\psi)/\psi$ is nonincreasing.

\begin{theorem}[Properties of $\tau$]\label{thm:tauProperties}
Let the phase field cost $c:[0,\infty)\to[0,\infty)$ be Borel measurable with $c(0)=0$, and let $c$ induce the transport cost $\tau:[0,\infty)\to[0,\infty)$ via \eqref{eqn:inverseProblem}. Then $\tau$ is admissible.
\end{theorem}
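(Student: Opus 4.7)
The plan is to verify the admissibility requirements in order of increasing difficulty. Nonnegativity $\tau \geq 0$ is immediate from $F^c \geq 0$, and $\tau(0) = 0$ by testing with $\psi \equiv 0$ (using $c(0) = 0$ so that $F^c[0] = 0$). The remaining properties, monotonicity, concavity, and continuity, will be approached by proving concavity first; monotonicity and continuity on $(0,\infty)$ then follow automatically, leaving only continuity at $0$ to handle separately.

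To prove concavity, my strategy is to represent $\tau$ as an infimum of a family of affine functions. The key structural observation is that one may always insert a constant plateau into a phase field profile at its peak: given any $\psi$ of mass $w_0$ with $\hat\phi_0 = \max\psi$ (without loss of generality attained at $0$ after symmetric-decreasing rearrangement, see \cref{thm:phaseFieldProperties}), the modified profile $\psi_\ell$ obtained by inserting a plateau of height $\hat\phi_0$ and width $\ell \geq 0$ at the origin satisfies $\int_\R \psi_\ell \,\d y = w_0 + \ell \hat\phi_0$ and $F^c[\psi_\ell] = F^c[\psi] + \ell c(\hat\phi_0)$. Applied to an $\varepsilon$-minimizer for $\tau(w_0)$, this yields the one-sided upper bound
\[
\tau(w) \leq \tau(w_0) + z(\hat\phi_0)(w - w_0) + \varepsilon \qquad \text{for all } w \geq w_0\,,
\]
where $z(\hat\phi_0) = c(\hat\phi_0)/\hat\phi_0$. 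Together with a matching Modica--Mortola lower bound from \cref{thm:ModicaMortolaTrick}, which forces the ``transition'' part of any profile to contribute at least $2\int_0^{\hat\phi}\sqrt{2c(\phi)}\,\d\phi$, one obtains a representation of the form $\tau(w) = \inf_{\hat\phi} [A(\hat\phi) + z(\hat\phi)\,w]$ over those $\hat\phi$ admissible for the given mass $w$. Since any infimum of affine functions is concave, this yields concavity.

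With concavity established, monotonicity is automatic: a nonnegative concave function on $[0,\infty)$ vanishing at the origin cannot decrease anywhere, for otherwise its linear extrapolation would eventually turn negative. Continuity on $(0,\infty)$ is a standard feature of real-valued concave functions, and continuity at $0$ is handled by constructing, for small $w > 0$, tent-shape profiles of mass $w$ with suitably small $F^c$; the local integrability of $\sqrt{c}$ near $0$ that makes this construction work is forced by the assumption that $\tau$ takes finite values, again via \cref{thm:ModicaMortolaTrick}.

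The main obstacle is the concavity step. The plateau construction yields only a one-sided support inequality (valid for $w \geq w_0$), so the globally concave representation of $\tau$ requires controlling the minimum achievable energy jointly in the mass $w$ and the peak height $\hat\phi$, and checking that optimizing $\hat\phi$ for a given $w$ produces a genuine affine support rather than merely a local upper bound. The matching lower bound --- that any near-optimal profile effectively decomposes into Modica--Mortola transitions plus a plateau --- is the most delicate technical step and will draw heavily on the symmetric rearrangement and Modica--Mortola estimates already proved.
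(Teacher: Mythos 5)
There is a genuine gap in the concavity step, which is the heart of the theorem. Your plan hinges on the representation $\tau(w)=\inf_{\hat\phi}[A(\hat\phi)+z(\hat\phi)w]$, obtained by inserting a plateau \emph{at the peak} of a near-optimal profile and matching it with a Modica--Mortola lower bound. But the theorem assumes only that $c$ is Borel measurable with $c(0)=0$; it does \emph{not} assume that $z(\phi)=c(\phi)/\phi$ is nonincreasing (indeed, the whole point of this section, combined with \cref{thm:existence}, is that one may \emph{afterwards} replace an arbitrary $c$ by one with nonincreasing $z$). Without monotonicity of $z$, the peak value $\hat\phi_0$ is not the cheapest place to insert mass, and the matching lower bound fails structurally: to bound $F^c[\psi]-z(\hat\phi_0)\int_\R\psi\,\d y$ from below by a transition cost one must apply the Modica--Mortola trick to the tilted potential $\phi\mapsto c(\phi)-z(\hat\phi_0)\phi$, which is nonnegative on $[0,\hat\phi_0]$ only if $z(\phi)\geq z(\hat\phi_0)$ there, i.e.\ only if $z$ is nonincreasing. (Even then your $A(\hat\phi)=2\int_0^{\hat\phi}\sqrt{2c}$ is the wrong transition cost --- it double-counts the mass carried by the transition; the correct kernel is $\sqrt{2(c(\phi)-z(\hat\phi)\phi)}$, cf.\ \cref{thm:convolutionIdentity}.) You acknowledge the matching lower bound is the delicate step but do not supply it, and along the proposed route it cannot be supplied in the stated generality.

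The paper's proof sidesteps the representation entirely by producing a two-sided affine support line directly. Given an $\varepsilon$-minimizer $\psi$ at mass $w$, it picks a point $\hat y$ where $z(\psi(\hat y))$ is within $\delta$ of $a(\varepsilon)=\inf_y z(\psi(y))$ --- not the peak. For $\hat w>w$ it inserts a plateau at $\hat y$, giving slope at most $a(\varepsilon)+\delta$; for $\hat w<w$ it removes mass by shifting the tails inward, and since \emph{every} point of the profile satisfies $z(\psi(y))\geq a(\varepsilon)$, the energy removed is at least $a(\varepsilon)$ per unit mass, giving the matching slope from below. Sending $\varepsilon\to0$ along a subsequence with $a(\varepsilon_n)\to a$ yields $\tau(\hat w)\leq\tau(w)+a(\hat w-w)$ for all $\hat w\geq0$, hence concavity. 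Your downstream deductions (monotonicity and interior continuity from concavity plus $\tau\geq0$, $\tau(0)=0$) are fine, but note also a secondary flaw: for continuity at $0$ your tent profiles have potential energy $\frac{2w}{h^2}\int_0^h c$, which can be infinite even when $\int_0^h\sqrt{c}<\infty$ (e.g.\ $c(\phi)\sim\phi^{-3/2}$); one should instead use the tail $\psi(|\cdot|+R)$ of a finite-energy profile, as the paper does, or the profile from \cref{thm:ModicaMortolaTrick}.
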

\begin{proof}
Since $F^c$ is nonnegative, we trivially have $\tau\geq0$ and $\tau(0)=F^c[0]=0$.

It is also straightforward to see that $\tau$ is nondecreasing.
Indeed, given any $\psi:\R\to[0,\infty)$ with mass $\int_\R\psi\,\d y=w$ and finite cost $F^c[\psi]$
(where by \cref{thm:phaseFieldProperties} we may assume $\psi$ to be bounded, even, and decreasing on $[0,\infty)$)
we can define $\psi_R(y)=\psi(|y|+R)$ for any $R\geq0$.
Since $R\mapsto\int_\R\psi_R\,\d y$ is Lipschitz continuous and nonincreasing with limit $\lim_{R\to\infty}\int_\R\psi_R\,\d y=0$,
for any $\hat w\in(0,w)$ there exists some $R(\hat w)>0$ with $\int_\R\psi_{R(\hat w)}\,\d y=\hat w$ and obviously
\begin{equation*}
\tau(\hat w)
\leq F^c[\psi_{R(\hat w)}]
=\int_{\R\setminus[-R(\hat w),R(\hat w)]}\frac12|\psi'|^2+c(\psi)\,\d y
\leq F^c[\psi]\,.
\end{equation*}
Taking the infimum over all phase field functions $\psi$ with mass $w$ we thus obtain $\tau(\hat w)\leq\tau(w)$.

Similarly, we can show continuity of $\tau$ in $0$.
Indeed, fix some $\psi:\R\to[0,\infty)$ (even and decreasing on $[0,\infty)$) with finite cost $F^c[\psi]$ and abbreviate
\begin{equation*}
S(R)=F^c[\psi_R]=\int_{\R\setminus[-R,R]}\rho\,\d y
\end{equation*}
for the Lebesgue integrable $\rho(y)=\frac12|\psi'(y)|^2+c(\psi(y))$.
The function $S:[0,\infty)\to[0,F^c[\psi]]$ is absolutely continuous and nonincreasing with $\lim_{R\to\infty}S(R)=0$.
Thus, for any $\delta>0$ we can find $R>0$ with $0<S(R)\leq\delta$.
Abbreviating $\hat w=\int_\R\psi_R\,\d y>0$ we thus obtain
\begin{equation*}
\tau(w)
\leq\tau(\hat w)
\leq F^c[\psi_R]
\leq\delta
\end{equation*}
for all $w\leq\hat w$, implying the desired continuity.

Finally, $\tau$ is concave.
Indeed, fix some arbitrary $w>0$.
For given small $\varepsilon,\delta>0$ we can find a phase field function $\psi:\R\to[0,\infty)$ with mass $\int_\R\psi\,\d y=w$ and phase field energy $F^c[\psi]\leq\tau(w)+\varepsilon$
as well as some $\hat y\in\R$ with $z(\psi(\hat y))=c(\psi(\hat y))/\psi(\hat y)\leq a(\varepsilon)+\delta$, where we abbreviated $a(\varepsilon)=\inf\{z(\psi(y))\,|\,y\in\R\}$.
Again, by \cref{thm:phaseFieldProperties} we may assume $\psi$ to be even.
Now let $\hat w\neq w$.
If $\hat w>w$, then by defining
\begin{equation*}
\psi_{\hat w}(y)
=\begin{cases}
\psi(y)&\text{if }y\leq\hat y\,,\\
\psi(\hat y)&\text{if }\hat y\leq y\leq\hat y+\frac{\hat w-w}{\psi(\hat y)}\,,\\
\psi(y-\frac{\hat w-w}{\psi(\hat y)})&\text{else},
\end{cases}
\end{equation*}
which satisfies $\int_\R\psi_{\hat w}\,\d y=\hat w$, we derive
\begin{equation*}
\tau(\hat w)
\leq F^c[\psi_{\hat w}]
=F^c[\psi]+z(\psi(\hat y))(\hat w-w)
\leq\tau(w)+\varepsilon+(a(\varepsilon)+\delta)(\hat w-w)\,.
\end{equation*}
Similarly, if $\hat w<w$, then there exists $R(\hat w)>0$ such that $\int_\R\psi_{R(\hat w)}\,\d y=\hat w$ so that
\begin{multline*}
\tau(\hat w)
\leq F^c[\psi_{R(\hat w)}]
=\int_\R\frac12|\psi'|^2+z(\psi)\psi\,\d y-\int_{-R(\hat w)}^{R(\hat w)}\frac12|\psi'|^2+z(\psi)\psi\,\d y\\
\leq F^c[\psi]-a(\varepsilon)\int_{-R(\hat w)}^{R(\hat w)}\psi\,\d y
%\leq F^c[\psi]+a(\varepsilon)(\hat w-w)
\leq\tau(w)+\varepsilon+a(\varepsilon)(\hat w-w)\,.
\end{multline*}
Summarizing and using the arbitrariness of $\delta$ we obtain
\begin{equation*}
\tau(\hat w)
\leq\tau(w)+\varepsilon+a(\varepsilon)(\hat w-w)
\end{equation*}
for all $\hat w\geq0$.
Now the parameters $a(\varepsilon)$ are uniformly bounded below by $0$ and above by $\frac{\tau(w)+1}{w}$ (otherwise the previous inequality would be violated for $\hat w=0$),
hence we can consider a sequence $\varepsilon_n\to0$ as $n\to\infty$ such that $a(\varepsilon_n)\to a\in[0,\infty)$ as $n\to\infty$.
Taking this limit in the above inequality we arrive at
\begin{equation*}
\tau(\hat w)
\leq\tau(w)+a(\hat w-w)
\end{equation*}
for all $\hat w\geq0$, where $a\in[0,\infty)$ of course depends on $w$.
Concavity now follows from the arbitrariness of $w$.
\end{proof}

\section{Existence and properties of phase field profile}\label{sec:existencePhaseField}
The derivation of the equivalent linear convolution problem \eqref{eqn:convolutionProblem} will make use of properties of optimal phase field functions.
To this end we now provide the existence of minimizers of the phase field energy $F^c$ under a mass constraint.
The essential issue here is that the phase field mass $\psi\mapsto\int_\R\psi\,\d y$ is not weakly continuous on the Hilbert space $H^1(\R)$ due to the unboundedness of the domain.
As a consequence, existence of minimizers only holds for certain masses.

\begin{theorem}[Existence of optimal phase field function]\label{thm:existencePhaseField}
Let the mass-specific phase field cost $z$ be nonincreasing and lower semi-continuous with $\int_0^1\sqrt{c(\phi)}\,\d\phi<\infty$,
and let $w>0$ such that $\tau^z(w)\neq(\tau^z)'(0)w$, then the optimization problem
\begin{equation*}
\tau^z(w)=\min\left\{F^c[\psi]\,\middle|\,\psi:\R\to[0,\infty),\,\int_\R\psi\,\d y=w\right\}
\end{equation*}
has a minimizer $\psi\in H^1(\R)$ which is bounded, continuous, even, and decreasing on $[0,\infty)$.
\end{theorem}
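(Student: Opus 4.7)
The plan is to apply the direct method of the calculus of variations. I would start from a minimizing sequence $(\psi_n)$ with $\int_\R\psi_n\,\d y=w$ and $F^c[\psi_n]\to\tau^z(w)$, and replace each $\psi_n$ by its symmetric decreasing rearrangement via \cref{thm:phaseFieldProperties}, so that every $\psi_n$ is bounded, continuous, even, and decreasing on $[0,\infty)$ without increasing the energy. Since the $\psi_n\restriction_{[0,\infty)}$ are uniformly bounded monotone functions, Helly's selection theorem yields a subsequence converging pointwise on $\R$ to some $\psi:\R\to[0,\infty)$ that is again even and decreasing on $[0,\infty)$.

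The next step is to establish two-sided bounds on the peak $\hat\psi_n=\psi_n(0)$. An upper bound $\hat\psi_n\leq M$ follows directly from \cref{thm:energyEstimate}, since $F^c[\psi_n]$ is bounded. A lower bound $\hat\psi_n\geq\delta>0$ is where the hypothesis $\tau^z(w)\neq(\tau^z)'(0)w$ is crucial: if $\hat\psi_n\to0$ along a subsequence, then monotonicity of $z$ gives $F^c[\psi_n]\geq z(\hat\psi_n)\int_\R\psi_n\,\d y=z(\hat\psi_n)w$, and by \cref{thm:aPrioriEstimateI} this tends to $(\tau^z)'(0)w$, which strictly exceeds $\tau^z(w)$ under the hypothesis (using the secant bound $\tau^z(w)/w\leq(\tau^z)'(0)$ that follows from concavity through the origin). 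With $\hat\psi_n\in[\delta,M]$ and $\int\psi_n=w$, the $\psi_n$ are uniformly bounded in $H^1(\R)$, so up to a further subsequence $\psi_n\rightharpoonup\psi$ weakly in $H^1(\R)$; the weak limit coincides with the pointwise limit, so $\psi\in H^1(\R)\hookrightarrow C(\R)$, and weak lower semi-continuity of the Dirichlet term together with Fatou's lemma applied to $c(\psi_n)$ (valid since $c$ is lower semi-continuous) yields $F^c[\psi]\leq\tau^z(w)$.

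The remaining, and only nontrivial, step—the main obstacle—is to rule out loss of mass at infinity, i.e.\ to show $\hat w:=\int_\R\psi\,\d y$ equals $w$. Suppose for contradiction $\hat w<w$. For arbitrary $\varepsilon>0$ pick $R>0$ so large that $\psi(R)<\varepsilon/2$; then $\psi_n(y)<\varepsilon$ for all $|y|>R$ and $n$ large, so monotonicity of $z$ gives $F^c[\psi_n]\geq\int_{-R}^R\frac12|\psi_n'|^2+c(\psi_n)\,\d y+z(\varepsilon)\int_{|y|>R}\psi_n\,\d y$. Letting $n\to\infty$ (weak lower semi-continuity and Fatou on $[-R,R]$, dominated convergence for the tail mass), then $R\to\infty$, then $\varepsilon\to0$ (using $z(\varepsilon)\to(\tau^z)'(0)$ from \cref{thm:aPrioriEstimateI}) produces $\tau^z(w)\geq\tau^z(\hat w)+(\tau^z)'(0)(w-\hat w)$. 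Combining this with the concavity bound $\tau^z(\hat w)\geq(\hat w/w)\tau^z(w)$ reduces to $\tau^z(w)/w\geq(\tau^z)'(0)$, which together with the reverse inequality from concavity forces equality $\tau^z(w)=(\tau^z)'(0)w$, contradicting the hypothesis. Hence $\int_\R\psi\,\d y=w$ and $\psi$ is a minimizer; its boundedness, continuity, evenness, and monotonicity on $[0,\infty)$ are inherited from the $\psi_n$ together with the $H^1$ regularity.
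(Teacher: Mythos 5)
Your proof is correct and follows the same direct-method skeleton as the paper (rearrangement via \cref{thm:phaseFieldProperties}, uniform $H^1$ bound, weak lower semi-continuity plus Fatou, then ruling out mass escape), but the two technical steps are executed differently. For the $L^2$ bound you use the peak estimate from \cref{thm:energyEstimate} together with $\int_\R\psi_n^2\,\d y\leq\hat\psi_n\int_\R\psi_n\,\d y$, which is a bit more streamlined than the paper's argument via Poincar\'e's inequality on a small interval around the origin (and your additional lower bound on the peak is harmless but not actually needed for compactness). More substantially, for the key no-mass-loss step the paper argues by contradiction at the level of the minimizing sequence: assuming a tail mass $\delta$ persists, it glues a fixed decaying profile $\psi_1(|y|-R_i)$ onto truncations of $\psi_{n_i}$ to build competitors of mass at least $w-\delta$ and deduces $\tau(w)\geq\tau(w-\delta)+\tau'(0)\delta$; you instead bound the tail energy directly from below by $z(\varepsilon)$ times the tail mass, pass to the limit in $n$, $R$, $\varepsilon$, and obtain $\tau^z(w)\geq\tau^z(\hat w)+(\tau^z)'(0)(w-\hat w)$ for the limit mass $\hat w$. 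Both arguments hinge on the same two ingredients --- $\lim_{\phi\searrow0}z(\phi)=(\tau^z)'(0)$ from \cref{thm:aPrioriEstimateI} and concavity of $\tau^z$ from \cref{thm:tauProperties} --- and reach the same contradiction with $\tau^z(w)\neq(\tau^z)'(0)w$; your version avoids the competitor construction entirely and is, if anything, slightly more direct. The only points worth making explicit are that $\tau^z(w)<\infty$ (which uses the hypothesis $\int_0^1\sqrt{c(\phi)}\,\d\phi<\infty$ via \cref{thm:ModicaMortolaTrick}) and that $z(\varepsilon)<\infty$ for $\varepsilon>0$ (which follows from the same hypothesis and monotonicity), so that the limit manipulations in your tail estimate are legitimate.
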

\begin{proof}
We will simply write $\tau$ for $\tau^z$ and $c(\phi)=z(\phi)\phi$.
First note that $F^c$ is bounded below by $0$, and there exists some $\psi:\R\to[0,\infty)$ with $\int_\R\psi\,\d y=w$ and $F^c[\psi]<\infty$.
Now consider a minimizing sequence $\psi_n:\R\to[0,\infty)$, $n=1,2,\ldots$, with $\int_\R\psi_n\,\d y=w$ and $F^c[\psi_n]\to\tau(w)$ monotonically as $n\to\infty$.
By \cref{thm:phaseFieldProperties} we may assume the $\psi_n$ to be even and decreasing on $[0,\infty)$.
We now proceed in steps.

\emph{Step\,1.}
The sequence $\psi_n$ is uniformly bounded in $H^1(\R)$.
Indeed, the $H^1$-seminorm $\int_\R|\psi_n'|^2\,\d y$ is uniformly bounded by $2F^c[\psi_1]$, so it remains to show boundedness of the $L^2$-norm.
To this end we note that $\{y\in\R\,|\,\psi_n(y)>1\}\subset[-\frac1{2w},\frac1{2w}]$ and estimate
\begin{multline*}
\int_\R\psi_n^2\,\d y
\leq\int_{\{y\in\R\,|\,\psi_n(y)>1\}}\psi_n^2\,\d y+\int_{\{y\in\R\,|\,\psi_n(y)\leq1\}}\psi_n\,\d y
\leq\int_{-\frac1{2w}}^{\frac1{2w}}\psi_n^2\,\d y+w\\
=\int_{-\frac1{2w}}^{\frac1{2w}}(\psi_n-\bar\psi_n)^2\,\d y+\int_{-\frac1{2w}}^{\frac1{2w}}2(\psi_n-\bar\psi_n)\bar\psi_n\,\d y+\int_{-\frac1{2w}}^{\frac1{2w}}\bar\psi_n^2\,\d y+w
=\int_{-\frac1{2w}}^{\frac1{2w}}(\psi_n-\bar\psi_n)^2\,\d y+\frac{\bar\psi_n^2}w+w\,,
\end{multline*}
where $\bar\psi_n=w\int_{-1/{2w}}^{1/{2w}}\psi_n\,\d y\leq w^2$ is the average value of $\psi_n$ on $[-\frac1{2w},\frac1{2w}]$.
With Poincar\'e's inequality on $[-\frac1{2w},\frac1{2w}]$ we thus obtain
\begin{equation*}
\int_\R\psi_n^2\,\d y
\leq K\int_{-\frac1{2w}}^{\frac1{2w}}|\psi_n'|^2\,\d y+w^3+w
\leq 2KF^c[\psi_1]+w^3+w
\end{equation*}
for $K$ the Poincar\'e constant, which is independent of $n$.

\emph{Step\,2.}
Since $\psi_n$ is uniformly bounded in $H^1(\R)$ there exists a weakly (and pointwise almost everywhere) converging subsequence (again indexed by $n$) with $\psi_n\rightharpoonup\psi$,
where $\psi\in H^1(\R)$ is nonnegative, even, and monotonically decreasing on $[0,\infty)$ (by Sobolev embedding it is also bounded and continuous).
Now Fatou's lemma and the lower semi-continuity of $c$ imply
\begin{equation*}
\int_\R c(\psi)\,\d y
\leq\int_\R\liminf_{n\to\infty}c(\psi_n)\,\d y
\leq\liminf_{n\to\infty}\int_\R c(\psi_n)\,\d y\,.
\end{equation*}
Together with the weak sequential lower semi-continuity of the $H^1$-seminorm this implies $F^c[\psi]\leq\liminf_{n\to\infty}F^c[\psi_n]=\tau(w)$.

\emph{Step\,3.}
For $\psi$ to be the desired minimizer, it remains to show $\int_\R\psi\,\d y=w$.
% To this end let $\hat\psi>0$ be such that $z(\hat\psi)>\tau(w)/w$ (which exists due to $\lim_{\psi\to0}z(\psi)=\tau'(0)>\tau(w)/w$ by \cref{thm:aPrioriEstimateI}).
% For $n$ large enough we have $\psi_n(0)>\hat\psi$, since otherwise $F^c[\psi_n]\geq z(\hat\psi)\int_\R\psi_n\,\d y=z(\hat\psi)w>\tau(w)$ by the monotonicity of $z$.
To this end we first show that for any $\delta>0$ we can find $L>0$ such that $\int_{\R\setminus[-L,L]}\psi_n\,\d y\leq\delta$ for all $n$.
Assume the contrary, that is, there exist sequences $n_i$ and $L_i$, $i=1,2,\ldots$, with $n_i,L_i\to\infty$ such that $\int_{\R\setminus[-L_i,L_i]}\psi_{n_i}\,\d y=\delta$.
Due to
\begin{equation*}
w-\delta=\int_{-L_i}^{L_i}\psi_{n_i}\,\d y\geq2L_i\psi_{n_i}(L_i)
\end{equation*}
this necessarily implies $\psi_{n_i}(L_i)\to0$ as $i\to\infty$.
Now define
\begin{equation*}
\tilde\psi_i(y)=\begin{cases}
\psi_{n_i}(y)&\text{if }y\in[-L_i,L_i]\,,\\
\psi_1(|y|-R_i)&\text{else},
\end{cases}
\end{equation*}
where $R_i$ is chosen such that $\psi_1(R_i)=\psi_{n_i}(L_i)$.
Obviously, $\int_{\R\setminus[-R_i,R_i]}\frac12|\psi_1'|^2+c(\psi_1)\,\d y\to0$ as $i\to\infty$ and $\int_\R\tilde\psi_i\,\d y\geq w-\delta$. Now
\begin{align*}
F^c[\psi_{n_i}]
&=F^c[\tilde\psi_i]-\int_{\R\setminus[-R_i,R_i]}\frac12|\psi_1'|^2+c(\psi_1)\,\d y+\int_{\R\setminus[-L_i,L_i]}\frac12|\psi_{n_i}'|^2+c(\psi_{n_i})\,\d y\\
&\geq\tau(w-\delta)-\int_{\R\setminus[-R_i,R_i]}\frac12|\psi_1'|^2+c(\psi_1)\,\d y+z(\psi_{n_i}(L_i))\int_{\R\setminus[-L_i,L_i]}\psi_{n_i}\,\d y\\
&>\tau(w-\delta)-\int_{\R\setminus[-R_i,R_i]}\frac12|\psi_1'|^2+c(\psi_1)\,\d y+z(\psi_{n_i}(L_i))\delta\,.
\end{align*}
Taking the limit $i\to\infty$, we obtain $z(\psi_{n_i}(L_i))\to\tau'(0)$ by \cref{thm:aPrioriEstimateI} and thus
\begin{equation*}
\tau(w)\geq\tau(w-\delta)+\tau'(0)\delta\,.
\end{equation*}
Together with the concavity property $\tau(w)\leq\tau(w-\delta)+\tau'(w-\delta)\delta\leq\tau(w-\delta)+\tau'(0)\delta$ this implies $\tau'(w-\delta)=\tau'(0)$ and thus $\tau(w-\delta)=\tau'(0)(w-\delta)$ as well as
\begin{equation*}
\tau(w)=\tau(w-\delta)+\tau'(0)\delta=\tau'(0)w\,,
\end{equation*}
the desired contradiction.

By the weak convergence $\psi_n\rightharpoonup\psi$ in $H^1((-L,L))$ we have
\begin{equation*}
\int_\R\psi\,\d y
\geq\int_{-L}^L\psi\,\d y
=\lim_{n\to\infty}\int_{-L}^L\psi_{n}\,\d y
\geq w-\delta
\end{equation*}
as well as
\begin{equation*}
\int_\R\psi\,\d y
=\int_\R\liminf_{n\to\infty}\psi_n\,\d y
\leq\liminf_{n\to\infty}\int_\R\psi_{n}\,\d y
=w
\end{equation*}
by Fatou's lemma.
The arbitrariness of $\delta>0$ concludes the proof.
\end{proof}

Just for the sake of completeness we briefly show nonexistence of optimal phase field functions in the region where $\tau$ is linear.
As mentioned before, the major reason is the lack of weak continuity of the mass constraint.
Indeed, it is straightforward to see that a minimizing sequence for mass $w$ would be given by $\psi_n(y)=\max\{0,\frac wn(1-\frac{|y|}n)\}$, $n=1,2,\ldots$, which converges weakly to $0$.

\begin{theorem}[Nonexistence of a minimizer for linear $\tau$]
Under the conditions of the previous theorem, assume $\tau^z(w)=(\tau^z)'(0)w$ for all $w\in[0,W]$.
Then for $w\in(0,W)$ there is no minimizer in \eqref{eqn:inverseProblem}.
\end{theorem}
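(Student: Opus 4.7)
The plan is by contradiction. Assume $\psi^* \in H^1(\R)$ minimizes, so $\int_\R \psi^* \, dy = w$ and $F^c[\psi^*] = \tau^z(w) = aw$, where $a := (\tau^z)'(0) = \lim_{\phi \searrow 0} z(\phi)$ by \cref{thm:aPrioriEstimateI}. By \cref{thm:phaseFieldProperties} I may assume $\psi^*$ is continuous, even, and nonincreasing on $[0,\infty)$; since $w > 0$, the peak $\hat\phi := \psi^*(0)$ is strictly positive, and because $H^1(\R)$ contains no nonzero constants, also $\int_\R |\psi^{*'}|^2\,dy > 0$.

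The heart of the argument is the formal Modica--Mortola first integral from the introduction, now applied in the degenerate linear regime where $(\tau^z)'(w) = a$. Mass-constrained minimality of $\psi^*$ for $F^c$ yields the Euler--Lagrange equation $-\psi^{*''} + c'(\psi^*) = (\tau^z)'(w) = a$ (the Lagrange multiplier being $-a$, exactly as computed in the introduction). Multiplying by $2\psi^{*'}$ and integrating, using the decay $\psi^*,\psi^{*'} \to 0$ at $\pm\infty$ together with $c(0) = 0$, produces the first integral
\[
|\psi^{*'}|^2 = 2\bigl(c(\psi^*) - a\psi^*\bigr) = 2\psi^*\bigl(z(\psi^*) - a\bigr) \quad\text{a.e.}
\]
Because $z$ is nonincreasing with $z(0^+) = a$, the right-hand side is non-positive, forcing $\psi^{*'} \equiv 0$. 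Thus $\psi^*$ is constant; being in $H^1(\R)$ it must vanish identically, contradicting $\hat\phi > 0$.

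The main obstacle is making the Euler--Lagrange step rigorous when $c$ is merely lower semi-continuous, since the pointwise derivative $c'$ may not exist classically. I would handle this by approximation: mollify $c$ into a sequence of $C^1$ costs $c_\varepsilon$ with $c_\varepsilon \ge c$ and whose mass-specific versions still satisfy $z_\varepsilon \le a$, run the calculation above rigorously for each $c_\varepsilon$, and transfer the contradiction back to $c$ via \cref{thm:monotonicityProperty} together with a diagonal passage to the limit. Alternatively one can sidestep differentiability altogether by starting from the integrated identity $\int_\R \tfrac12|\psi^{*'}|^2\,dy = \int_\R \psi^*(a - z(\psi^*))\,dy$ (which is forced by $F^c[\psi^*] = aw$ and $c(\phi) \le a\phi$), combining it with the AM--GM estimate $\tfrac12|\psi^{*'}|^2 + \psi^*(a - z(\psi^*)) \ge |\psi^{*'}|\sqrt{2\psi^*(a - z(\psi^*))}$ and the change of variables $\phi = \psi^*(y)$ used in \cref{thm:ModicaMortolaTrick}; the resulting equality case again forces $\psi^{*'} \equiv 0$ by essentially the same sign argument. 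Either route confirms the intuition of the spreading sequence $\psi_n \to 0$ mentioned in the remark above: the infimum $aw$ can only be approached by letting the mass spread out to infinity and is never realised by a strong $H^1(\R)$ profile.
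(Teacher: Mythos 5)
Your intuition is right, and your first route would work \emph{formally}: if the first integral $|\psi^{*\prime}|^2=2\psi^*(z(\psi^*)-a)$ held, then $z\leq a$ (which follows from $z$ nonincreasing and $\lim_{\phi\searrow0}z(\phi)=a$) would indeed force $\psi^{*\prime}\equiv0$. But neither of your two proposed ways of making this rigorous closes the gap. (i) The Euler--Lagrange/mollification route: the admissible $c$ here can be genuinely discontinuous (e.g.\ the piecewise constant $z$ of \cref{thm:genUrbPln}), so $c'$ does not exist, and the fix is circular --- your minimizer $\psi^*$ of $F^c$ does not satisfy the Euler--Lagrange equation of the mollified $F^{c_\varepsilon}$, while a minimizer of $F^{c_\varepsilon}$ itself need not exist (indeed, by the very theorem being proved it does not, since $\tau^{z_\varepsilon}$ is still linear on $[0,W]$ when $c\leq c_\varepsilon\leq a\phi$). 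Moreover the identification of the Lagrange multiplier with $-\tau'(w)$ is exactly the heuristic envelope argument of the introduction, not a proven fact. (ii) The integrated route does not yield a contradiction at all: $F^c[\psi^*]=aw$ gives $\int_\R\tfrac12|\psi^{*\prime}|^2\,\d y=\int_\R\psi^*(a-z(\psi^*))\,\d y$, and here \emph{both} sides are nonnegative (the sign is opposite to the one in your pointwise first integral), so the identity is perfectly consistent; the AM--GM step only produces a further lower bound on the Dirichlet energy, and there is no equality case to exploit. In particular your argument never handles the case $z(\psi^*(0))<a$, which is exactly where the pointwise first integral would be violated at the peak but where no contradiction is available from the integrated identity.

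The paper's proof avoids all optimality conditions and is a short two-case comparison. If $z\equiv\tau'(0)=a$ on $[0,\psi^*(0)]$, then $F^c[\psi^*]=\int_\R\tfrac12|\psi^{*\prime}|^2\,\d y+aw>aw=\tau(w)$, contradiction (this is the only case your sign argument actually covers). If instead $z(\psi^*(0))<a$, one inserts a plateau of height $\psi^*(0)$ at the peak to build a competitor of mass $W$ whose cost is $F^c[\psi^*]+z(\psi^*(0))(W-w)<aW=\tau(W)$ --- this uses $w<W$ strictly, i.e.\ that there is room left in the linear regime, and it is the step your argument is missing. I recommend adopting this direct construction rather than trying to repair the Euler--Lagrange approach.
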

\begin{proof}
Again we write $\tau$ for $\tau^z$.
Assume the converse, that is, there is some $\psi:\R\to[0,\infty)$ with $\int_\R\psi\,\d y=w$ and $F^c[\psi]=\tau(w)$.
By \cref{thm:phaseFieldProperties} we may assume $\psi$ to be even and to take its maximum in $y=0$.
In particular, $\psi(0)>0$.
By the monotonicity of $z$ and \cref{thm:aPrioriEstimateI} we have $z(\psi(0))\leq\tau'(0)$.
Now there are two cases.
If $z(\phi)=\tau'(0)$ for all $\phi\in[0,\psi(0)]$, then $F^c[\psi]=\int_\R\frac12|\psi'|^2\,\d y+\tau'(0)w>\tau(w)$, yielding a contradiction.
If on the other hand $z(\psi(0))<\tau'(0)$, then $\tilde\psi(y)=\psi(\max\{0,|y|-\frac{W-w}{2\psi(0)}\})$ satisfies
\begin{equation*}
\int_\R\tilde\psi\,\d y=W
\qquad\text{and}\qquad
\tau(W)
\leq F^c[\tilde\psi]
=F^c[\psi]+z(\psi(0))(W-w)
=\tau'(0)w+z(\psi(0))(W-w)
<\tau'(0)W
=\tau(W)\,,
\end{equation*}
yielding again a contradiction.
\end{proof}

\section{Identification of phase field cost via a deconvolution problem}\label{sec:deconvolution}
In this section we rigorously derive the linear deconvolution problem \eqref{eqn:convolutionProblem}.
We start by characterizing the minimizers from \cref{thm:existencePhaseField} as being minimizers of an alternative variational problem.

\begin{lemma}[Alternative characterization of optimal phase field function]\label{thm:alternativeMinimization}
Let $\tau=\tau^z$ for a nonincreasing, lower semi-continuous mass-specific phase field cost $z$.
Furthermore, let $w>0$ such that $\tau(w)\neq\tau'(0)w$, and let $\hat\psi:\R\to[0,\infty)$ be an optimal phase field function from \cref{thm:existencePhaseField}. Then
\begin{enumerate}
\item\label{enm:alternativeMinimization} for $\lambda=z(\hat\psi(0))$ we have
\begin{equation*}
\hat\psi\in\argmin\left\{F^c[\psi]-\lambda\int_\R\psi\,\d y\,\middle|\,\psi\in H^1(\R)\text{ nonnegative, even, and nonincreasing on }[0,\infty),\,\psi(0)=\hat\psi(0)\right\}\,,
\end{equation*}
\item $\lambda\geq\tau'(w)$ for the right derivative $\tau'(w)$ of $\tau$ in $w$,
\item $\lambda\leq\tau^{l}(w)$ for the left derivative $\tau^{l}(w)$ of $\tau$ in $w$.
\end{enumerate}
\end{lemma}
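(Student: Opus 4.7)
My plan rests on a single elementary observation driving all three parts: since $z$ is nonincreasing and $\lambda=z(\hat\psi(0))$, we have
\[
c(\phi)-\lambda\phi=(z(\phi)-z(\hat\psi(0)))\phi\geq0\qquad\text{for all }\phi\in[0,\hat\psi(0)]\,.
\]
Using this, I will manipulate any competitor (or $\hat\psi$ itself) by two elementary ``mass surgery'' operations whose effect on $E_\lambda[\psi]:=F^c[\psi]-\lambda\int_\R\psi\,\d y$ is completely transparent. Write $h=\hat\psi(0)$. \emph{Operation (a), plateau insertion at the maximum}: given an even profile $\psi$ with $\psi(0)=h$, insert a flat piece of height $h$ and width $\ell\geq0$ at $y=0$ (splitting $\psi$ at $y=0$ and shifting the halves by $\pm\ell/2$). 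The result is still even, nonincreasing on $[0,\infty)$, continuous (since $\psi(0)=h$ matches the inserted value), and in $H^1(\R)$; its mass increases by $\ell h$ and its energy by $c(h)\ell=\lambda h\ell$, so $E_\lambda$ is \emph{unchanged}. \emph{Operation (b), symmetric chopping at the maximum}: remove the interval $[-\epsilon^*,\epsilon^*]$ and glue the two halves; by evenness the values match at the seam, the glued profile is again admissible, and a direct computation gives
\[
E_\lambda[\tilde\psi]-E_\lambda[\psi]=-2\int_0^{\epsilon^*}\bigl(\tfrac12|\psi'|^2+(c(\psi)-\lambda\psi)\bigr)\,\d y\leq0
\]
by the key observation (note $\psi(y)\leq h$ for $y\geq0$). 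In either operation the mass can be tuned to any desired value in $[0,\infty)$ or $[0,\int_\R\psi\,\d y)$ respectively, by continuity of $\ell\mapsto\ell h$ and $\epsilon^*\mapsto2\int_0^{\epsilon^*}\psi\,\d y$.

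\emph{Proof of Part (1).} Take any competitor $\psi$ with $\psi(0)=h$ and let $\tilde w=\int_\R\psi\,\d y$. If $\tilde w<w$ apply (a) with $\ell=(w-\tilde w)/h$ to produce $\tilde\psi$ with $\int_\R\tilde\psi\,\d y=w$ and $E_\lambda[\tilde\psi]=E_\lambda[\psi]$; if $\tilde w>w$ apply (b) with $\epsilon^*$ chosen so $\int_\R\tilde\psi\,\d y=w$, obtaining $E_\lambda[\tilde\psi]\leq E_\lambda[\psi]$; if $\tilde w=w$ set $\tilde\psi=\psi$. In every case $\tilde\psi$ has mass $w$, so by definition of $\tau^z$ and the minimality of $\hat\psi$ from \cref{thm:existencePhaseField},
\[
F^c[\tilde\psi]\geq\tau^z(w)=F^c[\hat\psi]\,,\qquad\text{hence}\qquad E_\lambda[\tilde\psi]\geq E_\lambda[\hat\psi]\,.
\]
Combined with $E_\lambda[\psi]\geq E_\lambda[\tilde\psi]$ this yields Part (1).

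\emph{Proof of Parts (2) and (3).} Both follow by applying the surgeries to $\hat\psi$ itself. For Part (2), applying (a) with width $\ell>0$ gives a profile of mass $w+\ell h$ and energy $\tau^z(w)+\lambda\ell h$, so $\tau^z(w+\ell h)\leq\tau^z(w)+\lambda\ell h$; dividing by $\ell h$ and letting $\ell\searrow0$ yields $(\tau^z)'(w)\leq\lambda$. For Part (3), applying (b) with small $\epsilon^*>0$ and letting $\Delta w=2\int_0^{\epsilon^*}\hat\psi\,\d y>0$ produces a profile of mass $w-\Delta w$ with
\[
F^c[\tilde\psi]=\tau^z(w)-2\int_0^{\epsilon^*}\bigl(\tfrac12|\hat\psi'|^2+c(\hat\psi)\bigr)\,\d y\leq\tau^z(w)-\lambda\Delta w\,,
\]
again by the key observation. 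Hence $\tau^z(w-\Delta w)\leq\tau^z(w)-\lambda\Delta w$; dividing by $\Delta w$ and letting $\epsilon^*\searrow0$ (so $\Delta w\searrow0$ by continuity of $\hat\psi$) gives $(\tau^z)^l(w)\geq\lambda$.

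The main obstacle, if any, is purely bookkeeping: verifying that operations (a) and (b) produce admissible $H^1(\R)$ profiles with the claimed mass and energy changes. This boils down to continuity at the gluing points, which holds automatically by evenness (for chopping) and by matching the plateau height to $\psi(0)=h$ (for insertion), so no genuine analytic difficulty arises.
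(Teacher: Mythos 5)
Your proof is correct and follows essentially the same route as the paper's: the same two constructions (inserting a plateau of height $\hat\psi(0)$ at the maximum, which leaves $F^c-\lambda\int\psi$ unchanged, and symmetric removal of a central interval, which can only decrease it thanks to $c(\phi)-\lambda\phi\geq0$ for $\phi\leq\hat\psi(0)$) drive all three parts. The paper merely phrases Part (1) and Parts (2)--(3) as proofs by contradiction with a fixed competitor mass $W$, whereas you argue directly and via difference quotients; this is only a cosmetic difference.
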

\begin{proof}
\begin{enumerate}
\item
Abbreviate $c(\psi)=z(\psi)\psi$ as well as $E[\psi]=F^c[\psi]-\lambda\int_\R\psi\,\d y$ and note $E[\hat\psi]=\tau(w)-\lambda w$.
Assume there were some feasible function $\psi$ with strictly smaller $E[\psi]$, and abbreviate $W=\int_\R\psi\,\d y$.
We can distinguish two cases.
If $W\leq w$, then $\psi_W(y)=\psi(\max\{0,|y|-\frac{w-W}{2\hat\psi(0)}\})$ satisfies
\begin{equation*}
\int_\R\psi_W\,\d y=w
\qquad\text{and}\qquad
\tau(w)
\leq F^c[\psi_W]
=E[\psi_W]+\lambda w
=E[\psi]+\lambda w
<E[\hat\psi]+\lambda w
=\tau(w)\,,
\end{equation*}
a contradiction.
If on the other hand $W>w$ then there exists $R>0$ so that $\psi_R(y)=\psi(|y|+R)$ satisfies
\begin{equation*}
\int_\R\psi_R\,\d y=w\,,
\qquad\text{and then}\qquad
\tau(w)
\leq F^c[\psi_R]
=E[\psi_R]+\lambda w
\leq E[\psi]+\lambda w
<E[\hat\psi]+\lambda w
=\tau(w)
\end{equation*}
(where the second inequality follows from $c(\psi(y))-\lambda\psi(y)\geq(z(\hat\psi(0))-\lambda)\psi(y)\geq0$ for all $y\in\R$),
which is again a contradiction.

\item
Assume the converse, then there is some $W>w$ with $\tau(W)>\tau(w)+\lambda(W-w)$.
Now $\psi_W(y)=\hat\psi(\max\{0,|y|-\frac{W-w}{2\hat\psi(0)}\})$ satisfies
\begin{equation*}
\int_\R\psi_W\,\d y=W
\qquad\text{and}\qquad
\tau(W)
\leq F^c[\psi_W]
=E[\psi_W]+\lambda W
=E[\hat\psi]+\lambda W
=\tau(w)+\lambda(W-w)\,,
\end{equation*}
a contradiction.

\item
Assume the converse, then there is some $W<w$ with $\tau(W)>\tau(w)+\lambda(W-w)$.
Now there exists $R>0$ so that $\psi_R(y)=\hat\psi(|y|+R)$ satisfies
\begin{equation*}
\int_\R\psi_R\,\d y=W\,,
\qquad\text{and then}\qquad
\tau(W)
\leq F^c[\psi_R]
=E[\psi_R]+\lambda W
\leq E[\hat\psi]+\lambda W
=\tau(w)+\lambda(W-w)\,,
\end{equation*}
a contradiction.
\qedhere
\end{enumerate}
\end{proof}

\begin{remark}[On the Lagrange multiplier]
The optimal phase field functions constructed in \cref{thm:genUrbPln} suggest that at a point $w$ with $\tau'(w)<\tau^l(w)$ there are two minimizers $\hat\psi_1$ and $\hat\psi_2$ of \eqref{eqn:inverseProblem}.
One of them is such that the Lagrange multiplier $\lambda$ from the previous result satisfies $\lambda=z(\hat\psi_1(0))=\tau'(w)$, while for the other one we have $\lambda=z(\hat\psi_2(0))=\tau^l(w)$.
It is not expected that the case $\lambda\in(\tau'(w),\tau^l(w))$ can occur.
\end{remark}

% Since $c$ up to a point only depends on $\tau$ up to $w$, one could continue $\tau$ beyond $w$ with slope $\tau^{'l}(w)$ which would implie $\lambda=\tau^{'l}(w)$.
% Probably there is one minimizer with $\lambda=\tau^{'l}$ and one with $\lambda=\tau^{'r}$ (but typically none in between, e.g. consider urban planning).
% This can be proved by showing $\Gamma(\text{weak }H^1)-\lim_{i\to\infty}E^{\lambda_i,z_i}=E^{\lambda,z}$ for $\lambda_i,z_i\to\lambda,z$ with $E^{\lambda,z}(\phi)=\int_\R\frac12|\phi'|^2+c(\phi)-\lambda\phi\,\d y+\lambda w+\iota_{\max\phi=z}(\phi)$.

In the following we will make use of the concept of a generalized inverse, as for instance also considered in \cite{EmHo13}
(while we consider nonincreasing functions on $[0,\infty)$, most other works such as \cite{EmHo13} consider nondecreasing functions on $\R$, but there is no essential difference in the resulting structure and properties).

\begin{theorem}[Generalized inverse]\label{thm:generalizedInverse}
For a nonincreasing lower semi-continuous function $z:[0,\infty)\to[0,\infty]$
its generalized inverse $z^{-1}:[0,\infty)\to[0,\infty]$ is defined as
\begin{equation*}
z^{-1}(x)=\inf\{y\in[0,\infty)\,|\,z(y)\leq x\}\,,
\end{equation*}
where by convention $\inf\emptyset=\infty$.
We have the following properties,
\begin{enumerate}
\item $z^{-1}$ is nonincreasing and lower semi-continuous,
\item $(z^{-1})^{-1}=z$,
\item\label{enm:integralFormula} $\int_0^\infty z\,\d y=\int_0^\infty z^{-1}\,\d x$.
\end{enumerate}
\end{theorem}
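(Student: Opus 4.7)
The plan is to reduce all three claims to a single structural fact: a nonincreasing lower semi-continuous function $z$ on $[0,\infty)$ is automatically right-continuous. Indeed, by lower semi-continuity the sublevel set $\{y : z(y) \leq x\}$ is closed, and by monotonicity it is an interval of the form $[y^*,\infty)$ (or empty); hence the infimum defining $z^{-1}(x)$ is actually attained whenever it is finite. From this I obtain the key equivalence
\begin{equation*}
z^{-1}(x) \leq \alpha \quad\Longleftrightarrow\quad z(\alpha) \leq x,
\end{equation*}
valid for all $\alpha,x\in[0,\infty)$: the forward direction uses attainment together with monotonicity, the backward direction is immediate from the definition of the infimum.

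For item 1, monotonicity of $z^{-1}$ is clear because enlarging $x$ enlarges the set $\{z\leq x\}$ and can only decrease its infimum; the displayed equivalence then identifies $\{x : z^{-1}(x) \leq \alpha\} = [z(\alpha),\infty)$ as a closed set, yielding lower semi-continuity. Item 2 follows with essentially no extra work: since $z^{-1}$ is itself nonincreasing and lower semi-continuous, the same equivalence applied to $z^{-1}$ in place of $z$ gives $(z^{-1})^{-1}(y) = \inf\{x : z^{-1}(x) \leq y\} = \inf\{x : z(y) \leq x\} = z(y)$.

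Item 3 is the layer-cake identity. The plan is to write $z(y) = \int_0^\infty \chi_{\{x < z(y)\}}(x)\,\d x$ and apply Tonelli to obtain
\begin{equation*}
\int_0^\infty z(y)\,\d y = \int_0^\infty |\{y : z(y) > x\}|\,\d x,
\end{equation*}
and then to use the same structural analysis to identify $\{y : z(y) > x\} = [0, z^{-1}(x))$, whose Lebesgue measure is exactly $z^{-1}(x)$.

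The main technical care I anticipate is the bookkeeping at the extended-real endpoints: handling the cases $z(0)=\infty$, the possibility that the sublevel set of $z$ is empty (so that $z^{-1}$ itself takes the value $\infty$), and pinning down the open-versus-closed endpoint of $\{z > x\}$ so that its measure is indeed $z^{-1}(x)$ rather than some off-by-endpoint variant. These are all routine case distinctions resting on the right-continuity observation above, and I do not expect any deeper obstacle.
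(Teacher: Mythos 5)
Your proposal is correct and follows essentially the same route as the paper: both arguments rest on the attainment of the infimum (closedness of the sublevel set $\{y\,|\,z(y)\leq x\}$ via lower semi-continuity plus monotonicity), the resulting equivalence $z^{-1}(x)\leq\alpha\Leftrightarrow z(\alpha)\leq x$ (the paper uses its negation $x<z(y)\Leftrightarrow y<z^{-1}(x)$), and Fubini/Tonelli for the integral identity. Your verification of lower semi-continuity via closed sublevel sets is a slightly cleaner packaging than the paper's sequential argument, but it is the same underlying fact.
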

\begin{proof}
\begin{enumerate}
\item
Let $x_1<x_2$, then $z^{-1}(x_2)$ is the infimum over a larger set than is $z^{-1}(x_1)$, thus $z^{-1}(x_2)\leq z^{-1}(x_1)$ and $z^{-1}$ is nonincreasing.
To show the lower semi-continuity it suffices to consider sequences $x_n$, $n=1,2,\ldots$, with $x_n\to x$ from the right (size $z^{-1}$ is automatically lower semi-continuous from the left as a nonincreasing function).
By the lower semi-continuity of $z$ we have $z(z^{-1}(x_n))\leq x_n$ for all $n$.
Furthermore, the lower semi-continuity of $z$ implies $z(\lim_{n\to\infty}z^{-1}(x_n))\leq\lim_{n\to\infty}z(z^{-1}(x_n))\leq x$,
thus $\lim_{n\to\infty}z^{-1}(x_n)\geq z^{-1}(x)$.
\item
We have $(z^{-1})^{-1}(y)\leq z(y)$ by definition of the generalized inverse.
For the converse inequality, assume there exists some $y\in[0,\infty)$ with $(z^{-1})^{-1}(y)<z(y)$,
then this would imply the existence of some $x<z(y)$ with $z^{-1}(x)\leq y$.
However, the latter implies $z(y)\leq x$, a contradiction.
\item
Letting $\mathbf1_A$ denote the characteristic function of a set $A\subset\R^2$, by Fubini's theorem we have
\begin{multline*}
\int_0^\infty z(y)\,\d y
=\int_0^\infty\int_0^\infty\mathbf1_{\{x\in[0,\infty)\,|\,x<z(y)\}}(x)\,\d x\,\d y\\
=\int_0^\infty\int_0^\infty\mathbf1_{\{y\in[0,\infty)\,|\,y<z^{-1}(x)\}}(y)\,\d x\,\d y
=\int_0^\infty z^{-1}(x)\,\d x\,,
\end{multline*}
where we exploited the equivalence of $x<z(y)$ with $y<z^{-1}(x)$.
\qedhere
\end{enumerate}
\end{proof}

\begin{lemma}[Convolution identity]\label{thm:convolutionIdentity}
Let the two nonincreasing, lower semi-continuous functions $z,g:[0,\infty)\to[0,\infty]$ be related by
\begin{equation*}
g=(z^{-1}(\cdot))^{3/2}
\qquad\text{or equivalently}\qquad
z=g^{-1}((\cdot)^{2/3})\,,
\end{equation*}
(in the sense of generalized inverses) and define
\begin{equation*}
r(s)=\begin{cases}\frac{2\sqrt2}{3\sqrt{-s}}&\text{if }s<0,\\0&\text{else.}\end{cases}
\end{equation*}
Then, for any $t,\hat\phi\in[0,\infty)$ with $z(\hat\phi)=t$ or $\hat\phi=z^{-1}(t)$ we have
\begin{equation*}
\inf\left\{\int_\R\frac12|\psi'|^2+\max\{0,z(\psi)-t\}\psi\,\d y\,\middle|\,\psi\in H^1(\R)\text{ nonnegative, even, and nonincreasing on }[0,\infty),\,\psi(0)=\hat\phi\right\}
=\left[g*r\right](t)\,.
\end{equation*}
\end{lemma}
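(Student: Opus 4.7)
The plan is to reduce the variational problem to a one-dimensional integration via the standard Modica--Mortola trick and then identify the resulting integral with $[g*r](t)$ through integration by parts and a change of variables into the generalized-inverse framework.

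First I would simplify the integrand. Since $\psi$ is nonincreasing on $[0,\infty)$ with $\psi(0)=\hat\phi$ and $z$ is nonincreasing with $z(\hat\phi)=t$, we have $\psi(y)\leq\hat\phi$ and hence $z(\psi(y))\geq t$ for $y\geq0$; by evenness the same holds on $(-\infty,0]$. Thus $\max\{0,z(\psi)-t\}=z(\psi)-t$ almost everywhere and the functional reduces to
\begin{equation*}
E[\psi]=\int_\R\frac12|\psi'|^2+(z(\psi)-t)\psi\,\d y=2\int_0^\infty\frac12|\psi'|^2+(z(\psi)-t)\psi\,\d y\,.
\end{equation*}
For the lower bound I would then apply Young's inequality pointwise,
\begin{equation*}
\tfrac12|\psi'|^2+(z(\psi)-t)\psi\geq|\psi'|\sqrt{2(z(\psi)-t)\psi}\,,
\end{equation*}
and perform the change of variables $\phi=\psi(y)$ on $[0,\infty)$ (plateaus contribute zero on both sides, so the argument is insensitive to the lack of strict monotonicity), obtaining
\begin{equation*}
E[\psi]\geq2\int_0^{\hat\phi}\sqrt{2(z(\phi)-t)\phi}\,\d\phi\,.
\end{equation*}

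Next I would identify this one-dimensional integral with $[g*r](t)$. Formally, an integration by parts using that $\sqrt{z(\phi)-t}$ vanishes at $\phi=\hat\phi$ and $\phi^{3/2}$ vanishes at $\phi=0$ gives
\begin{equation*}
\int_0^{\hat\phi}\sqrt{(z(\phi)-t)\phi}\,\d\phi=-\frac13\int_0^{\hat\phi}\frac{\phi^{3/2}z'(\phi)}{\sqrt{z(\phi)-t}}\,\d\phi\,,
\end{equation*}
and substituting $s=z(\phi)$, so that $\phi=z^{-1}(s)$ and $g(s)=\phi^{3/2}$, yields
\begin{equation*}
2\int_0^{\hat\phi}\sqrt{2(z(\phi)-t)\phi}\,\d\phi=\frac{2\sqrt2}{3}\int_t^\infty\frac{g(s)}{\sqrt{s-t}}\,\d s=[g*r](t)\,.
\end{equation*}
To avoid any differentiability or continuity assumption on $z$, I would phrase the entire manipulation directly via the generalized inverse, using \cref{thm:generalizedInverse} (in particular property \ref{enm:integralFormula}) to justify the substitution; this makes the identity between the two integral representations robust under discontinuities and flat pieces of $z$.

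For the upper bound I would exhibit near-optimal profiles. Where the ODE $|\psi'|=\sqrt{2(z(\psi)-t)\psi}$ has a well-defined solution decreasing from $\hat\phi$ to $0$, it attains equality throughout Young's inequality and thus realizes the lower bound. In the general lower semi-continuous case I would approximate $z$ by a sequence $z_n$ as in \cref{thm:genUrbPln} (piecewise constant, built from the $a_i,\phi_i$), for which an explicit optimal profile is available from Step 3 of that proof, and then pass to the limit using the rescaling bounds of \cref{thm:rescaling} and the convergence of the corresponding $g_n\to g$. The main obstacle is precisely this upper-bound construction in full generality: if $z$ is constant equal to $t$ on a nontrivial interval ending at $\hat\phi$, or has a jump there, the naive ODE integrand $1/\sqrt{2(z(\phi)-t)\phi}$ fails to be integrable and one must either allow $\psi$ to have a plateau at $\hat\phi$ (on which the cost density vanishes since $z(\hat\phi)=t$) or regularise $z$ by $z+1/n$ and take a diagonal sequence. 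Because $[g*r](t)$ is already expressed in terms of the generalized inverse, it is continuous under these regularisations, which closes the two-sided estimate.
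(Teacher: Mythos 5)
Your proof follows the same skeleton as the paper's: reduce to a half-line by evenness, obtain the lower bound via Young's inequality and the change of variables $\phi=\psi(y)$, identify $2\int_0^{\hat\phi}\sqrt{2(z(\phi)-t)\phi}\,\d\phi$ with $[g*r](t)$ through the generalized inverse, and construct near-optimal profiles for the upper bound. Two remarks on where your execution diverges. First, the ``main obstacle'' you identify in the upper bound is already resolved by \cref{thm:ModicaMortolaTrick}: its second half constructs, for the lower semi-continuous potential $c_t(\phi)=\max\{0,z(\phi)-t\}\phi$ and any $\delta>0$, a monotone profile from $\hat\phi$ to $0$ with energy at most $\int_0^{\hat\phi}\sqrt{2c_t(\phi)}\,\d\phi+\delta$, precisely via the $\tilde\delta$-regularization of $1/\sqrt{c_t}$ that you propose reinventing; invoking that lemma makes your piecewise-constant approximation of $z$ together with \cref{thm:rescaling} and the unproven convergence $g_n*r\to g*r$ unnecessary. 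Second, your identification step is only gestured at: the formal integration by parts requires $z'$, and ``phrase it via the generalized inverse'' needs a concrete device — the paper introduces $h(q)=\frac23g(t+\frac{q^2}2)$, computes $h^{-1}(p)=\sqrt{2\max\{0,g^{-1}(\frac32p)-t\}}$, and applies \cref{thm:generalizedInverse}\eqref{enm:integralFormula} to $h$; without exhibiting such a pair the claim that the substitution ``is robust'' is not yet a proof. A minor caution: your opening simplification $\max\{0,z(\psi)-t\}=z(\psi)-t$ uses $z(\hat\phi)=t$, but the hypothesis only guarantees $\hat\phi=z^{-1}(t)$, in which case $z(\hat\phi)\leq t$ may be strict; the paper avoids this by keeping the $\max$ throughout (note $\max\{0,z(\phi)-t\}=0$ for $\phi\geq\hat\phi$ anyway, which is what lets one extend the $\phi$-integral to $\infty$).
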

\begin{proof}
We have
\begin{align*}
&\inf\left\{\int_\R\frac12|\psi'|^2+\max\{0,z(\psi)-t\}\psi\,\d y\,\middle|\,\psi\in H^1(\R)\text{ nonnegative, even, and nonincreasing on }[0,\infty),\,\psi(0)=\hat\phi\right\}\\
%&=2\inf\left\{\int_0^\infty\frac12|\psi'|^2+(z(\psi)-t)\psi\,\d y\,\middle|\,\psi\in H^1([0,\infty))\text{ nonnegative and nonincreasing},\,\psi(0)=\hat\phi\right\}\\
&=2\inf\left\{\int_0^\infty\frac12|\psi'|^2+\max\{0,z(\psi)-t\}\psi\,\d y\,\middle|\,\psi\in H^1([0,\infty))\text{ nonnegative and nonincreasing},\,\psi(0)=\hat\phi\right\}\\
&=2\inf\left\{\int_0^\infty\frac12|\psi'|^2+\max\{0,z(\psi)-t\}\psi\,\d y\,\middle|\,\psi\in H^1([0,\infty))\text{ nonnegative},\,\psi(0)=\hat\phi\right\}\\
&=2\int_0^{\hat\phi}\sqrt{2\max\{0,z(\phi)-t\}\phi}\,\d\phi\\
&=2\int_0^\infty\sqrt{2\max\{0,z(\phi)-t\}\phi}\,\d\phi\,,
\end{align*}
where in the second equality we used the P\'olya--Szeg\"o inequality, in the third equality we used \cref{thm:ModicaMortolaTrick},
and in the last equality we used that $z$ is nonincreasing.
Now abbreviate
\begin{equation*}
h:[0,\infty)\to[0,\infty],\;
h(q)=\frac23g\left(t+\frac{q^2}2\right)\,.
\end{equation*}
This function is nonincreasing and lower semi-continuous, hence admits a generalized inverse, given by
\begin{equation*}
h^{-1}(p)=\sqrt{2\max\left\{0,g^{-1}(\tfrac32p)-t\right\}}\,.
\end{equation*}
Using the change of variables $\phi=(3p/2)^{2/3}$ and \cref{thm:generalizedInverse}\eqref{enm:integralFormula} we now obtain
\begin{multline*}
2\int_0^\infty\sqrt{2\max\{0,z(\phi)-t\}\phi}\,\d\phi
=2\int_0^\infty\sqrt{2\max\{0,z((3p/2)^{2/3})-t\}}\,\d p
=2\int_0^\infty h^{-1}(p)\,\d p\\
=2\int_0^\infty h(q)\,\d q
=\frac43\int_0^\infty g\left(t+\frac{q^2}2\right)\,\d q
=\frac43\int_{t}^\infty\frac{g(s)}{\sqrt{2(s-t)}}\,\d s
=\left[g*r\right](t)\,,
\end{multline*}
where in the second last step we used the change of variables $s=t+\frac{q^2}2$.
\end{proof}

\begin{theorem}[Necessary condition]\label{thm:necessaryCondition}
Let $\tau=\tau^z$ for a nonincreasing, lower semi-continuous mass-specific phase field cost $z$.
\Cref{eqn:convolutionProblem} holds for all $t\in\{\tau'(w)\,|\,w\geq0,\,\tau\text{ differentiable in }w\}$.
\end{theorem}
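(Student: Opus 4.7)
The plan is to split the Legendre--Fenchel conjugate and then recognise the infimum defining $\tau(w)-tw$ as precisely the minimization handled by \cref{thm:convolutionIdentity}. Unpacking the definition, with $\tau$ extended by $-\infty$ on the negative axis one obtains $[-\tau(-\cdot)]^\ast(t)=\sup_{\tilde w\geq0}\{\tau(\tilde w)-t\tilde w\}$. If $\tau$ is differentiable at $w$ with $\tau'(w)=t$, concavity together with $\tau(0)=0$ shows that the affine function $\tilde w\mapsto\tau(w)+t(\tilde w-w)$ dominates $\tau$ globally on $[0,\infty)$, so the supremum equals $\tau(w)-tw$. It therefore suffices to prove $\tau(w)-tw=[g\ast r](t)$ at such a $w$.

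Assume first the generic case $\tau(w)\neq\tau'(0)w$. \Cref{thm:existencePhaseField} supplies a minimizer $\hat\psi$ of $F^c$ under the mass constraint $\int_\R\hat\psi\,\d y=w$ that is nonnegative, even and nonincreasing on $[0,\infty)$. Setting $\lambda=z(\hat\psi(0))$, \cref{thm:alternativeMinimization} sandwiches $\lambda$ between the right and left derivatives of $\tau$ at $w$, which coincide by differentiability; hence $\lambda=t$, and $\hat\phi:=\hat\psi(0)$ satisfies $z(\hat\phi)=t$, which is precisely the precondition of \cref{thm:convolutionIdentity}. Invoking part~(1) of \cref{thm:alternativeMinimization} to replace the fixed $\hat\psi$ by its variational characterization yields
\[
\tau(w)-tw=F^c[\hat\psi]-\lambda\int_\R\hat\psi\,\d y=\inf\left\{\int_\R\tfrac12|\psi'|^2+(z(\psi)-t)\psi\,\d y\right\},
\]
where the infimum is taken over $\psi\in H^1(\R)$ nonnegative, even and nonincreasing on $[0,\infty)$ with $\psi(0)=\hat\phi$. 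For any such admissible $\psi$ the monotonicities of $z$ and of $\psi$ give $\psi\leq\hat\phi$ pointwise and thus $z(\psi)\geq t$ wherever $\psi>0$, so $(z(\psi)-t)\psi=\max\{0,z(\psi)-t\}\psi$ almost everywhere. \Cref{thm:convolutionIdentity} applied with this $\hat\phi$ then identifies the infimum with $[g\ast r](t)$, which completes the generic case.

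It remains to handle the degenerate case $\tau(w)=\tau'(0)w$, in which both sides of \eqref{eqn:convolutionProblem} vanish: concavity together with $\tau(0)=0$ forces $[-\tau(-\cdot)]^\ast(t)=0$, while \cref{thm:aPrioriEstimateI} combined with the monotonicity of $z$ gives $z(\phi)\leq\tau'(0)=t$ for every $\phi>0$, so $z^{-1}(s)=0$ and hence $g(s)=0$ for all $s\geq t$, yielding $[g\ast r](t)=0$ as well. The one delicate step is the identification $\lambda=t$, which uses differentiability of $\tau$ at $w$ in an essential way; this is exactly the reason the statement is restricted to the set $\{\tau'(w)\,|\,\tau\text{ differentiable at }w\}$ rather than all of $[0,\infty)$, and I expect it to be the sticking point when trying to promote the necessary condition to a sufficient one in \cref{thm:sufficientCondition}.
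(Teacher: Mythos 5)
Your proof is correct and follows essentially the same route as the paper's: the generic case $\tau(w)\neq\tau'(0)w$ is handled by combining \cref{thm:existencePhaseField}, \cref{thm:alternativeMinimization} (which pins down $\lambda=z(\hat\psi(0))=t$ via differentiability) and \cref{thm:convolutionIdentity}, while the boundary case $t=\tau'(0)$ is settled by \cref{thm:aPrioriEstimateI} showing both sides vanish. Your added justifications (why the Fenchel conjugate equals $\tau(w)-tw$, and why $(z(\psi)-t)\psi=\max\{0,z(\psi)-t\}\psi$ on the admissible class) are exactly the steps the paper leaves implicit.
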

\begin{proof}
Let $\tau$ be differentiable at $w$ with $\tau'(w)=t$, where initially we assume $t\neq\tau'(0)$.
\Cref{thm:alternativeMinimization} now implies $t=z(\hat\psi(0))$, where $\hat\psi$ is an optimal phase field function of mass $\int_\R\hat\psi\,\d y=w$, as well as (abbreviating $c(\phi)=z(\phi)\phi$)
\begin{align*}
\tau(w)-\tau'(w)w
&=F^c[\hat\psi]-t\int_\R\hat\psi\,\d y\\
&=\min\left\{F^c[\psi]-t\int_\R\psi\,\d y\,\middle|\,\psi\in H^1(\R)\text{ nonnegative, even, and nondecreasing on }[0,\infty),\,\psi(0)=\hat\psi(0)\right\}\\
&=\min\left\{\int_\R\frac12|\psi'|^2+\max\{0,z(\psi)-t\}\psi\,\d y\,\middle|\,\right.\\
&\hspace{20ex}\left.\vphantom{\int_\R}\psi\in H^1(\R)\text{ nonnegative, even, and nondecreasing on }[0,\infty),\,\psi(0)=\hat\psi(0)\right\}\,,
\end{align*}
where in the last step we used that $z$ is nonincreasing.
By \cref{thm:convolutionIdentity}, the right-hand side equals $[g*r](t)$, while the left-hand side is nothing else than $[-\tau(-\cdot)]^\ast(t)$.

The case $t=\tau'(0)$ is treated separately:
By \cref{thm:aPrioriEstimateI} we have $t=\lim_{\phi\searrow0}z(\phi)$ so that $g=0$ on $[t,\infty)$.
Furthermore, it is a straightforward consequence of the Legendre--Fenchel conjugate and the admissibility of $\tau$ that $[-\tau(-\cdot)]^\ast(t)=0$,
hence indeed $[-\tau(-\cdot)]^\ast(t)=\left[g*r\right](t)$, as desired.
\end{proof}

\begin{remark}[Necessary condition is not sufficient]
The condition from \cref{thm:necessaryCondition} is necessary, but not sufficient.
For instance, consider the transport cost $\tau(w)=\min\{aw,bw+d\}$
and the cost $\tilde\tau$, in which the nondifferentiability of $\tau$ is smoothed out in a small neighbourhood of $d/(a-b)$ but which otherwise coincides with $\tau$.
Then $[-\tau(-\cdot)]^\ast$ coincides with $[-\tilde\tau(-\cdot)]^\ast$ on $\{a,b\}$, thus the phase field cost $\tilde c$ inducing $\tilde\tau$ satisfies the necessary conditions for inducing $\tau$.
\end{remark}

\begin{theorem}[Sufficient condition]\label{thm:sufficientCondition}
Let the lower semi-continuous, nonincreasing function $g$ solve \eqref{eqn:convolutionProblem} for all $t\geq0$ and an admissible transport cost $\tau$, then $\tau=\tau^z$ for $z=g^{-1}((\cdot)^{2/3})$.
\end{theorem}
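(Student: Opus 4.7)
My plan is to set $z = g^{-1}((\cdot)^{2/3})$, $c(\phi) = z(\phi)\phi$, and $\tilde\tau = \tau^z$, and to prove the two inequalities $\tilde\tau \geq \tau$ and $\tilde\tau \leq \tau$ by combining \cref{thm:convolutionIdentity} with the Fenchel--Moreau biconjugation of the admissible $\tau$.

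For the lower bound, I fix $w > 0$ and take any $\psi$ with $\int_\R \psi \, \d y = w$, which by \cref{thm:phaseFieldProperties} I may assume to be even and nonincreasing on $[0,\infty)$. Setting $\hat\phi = \psi(0)$ and $t = z(\hat\phi)$, monotonicity of $z$ gives $z(\psi) \geq t$ pointwise, so \cref{thm:convolutionIdentity} applied at height $\hat\phi$ yields
\begin{equation*}
[g*r](t) \leq \int_\R \tfrac12 |\psi'|^2 + (z(\psi) - t)\psi \, \d y = F^c[\psi] - tw.
\end{equation*}
Combined with the assumption $[g*r](t) = [-\tau(-\cdot)]^\ast(t)$ and the biconjugation identity $\tau(w) = \inf_{t' \geq 0}\{[-\tau(-\cdot)]^\ast(t') + t'w\}$ (valid because admissibility makes $-\tau(-\cdot)$ proper, l.s.c., convex), this yields $F^c[\psi] \geq \tau(w)$; infimizing over $\psi$ gives $\tilde\tau \geq \tau$.

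For the upper bound I construct, for each $\epsilon > 0$ and each $w > 0$, a $\psi$ with $\int_\R \psi \, \d y = w$ and $F^c[\psi] \leq \tau(w) + \epsilon$. I pick a near-optimal $t^\ast$ with $[g*r](t^\ast) + t^\ast w \leq \tau(w) + \epsilon/3$ and set $\hat\phi^\ast = z^{-1}(t^\ast)$; the degenerate case $\hat\phi^\ast = 0$ (where $z \leq t^\ast$ everywhere and a wide flat bump already achieves $\int c(\psi)\,\d y \leq t^\ast w$) is handled separately, while $\hat\phi^\ast = \infty$ is excluded by finiteness of $[g*r](t^\ast)$. In the generic case, the Modica--Mortola recipe from the proof of \cref{thm:convolutionIdentity} (using \cref{thm:ModicaMortolaTrick}) produces an even, compactly supported $\psi_0$ with $\psi_0(0) = \hat\phi^\ast$ and $\int \frac12|\psi_0'|^2 + (z(\psi_0) - t^\ast)\psi_0 \, \d y \leq [g*r](t^\ast) + \epsilon/3$. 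Differentiating $[g*r](t) = 2\int_0^{z^{-1}(t)} \sqrt{2(z(\phi)-t)\phi}\,\d\phi$ (the boundary term vanishes since $z(z^{-1}(t)) = t$) gives
\begin{equation*}
-\partial_t [g*r](t) = 2\int_0^{z^{-1}(t)} \sqrt{\tfrac{\phi}{2(z(\phi)-t)}}\,\d\phi =: W_0(t),
\end{equation*}
which is precisely the mass of the Modica--Mortola profile. Since $t^\ast$ minimizes $t \mapsto [g*r](t) + tw$, the first-order condition forces $W_0(t^\ast) = w$, so after a small plateau correction at height $\hat\phi^\ast$ (absorbing the Modica--Mortola slack) the resulting $\psi$ has mass exactly $w$ and satisfies $F^c[\psi] \leq [g*r](t^\ast) + t^\ast w + \epsilon/3 \leq \tau(w) + 2\epsilon/3$. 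Sending $\epsilon \to 0$ yields $\tilde\tau \leq \tau$.

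Combining the two bounds proves $\tilde\tau = \tau$. The main obstacle will be rigorously executing the upper bound for general lower semi-continuous nonincreasing $z$: the derivative identity $-\partial_t[g*r] = W_0$ is transparent when $z^{-1}$ is smooth, but in general $z^{-1}$ may jump, $W_0(t)$ may be discontinuous, and the minimizer $t^\ast$ of $t \mapsto [g*r](t) + tw$ may sit inside a flat region of $[g*r]$ or in a gap of the image of $z$, so the identification $W_0(t^\ast) = w$ must be replaced by a one-sided inequality. Handling these situations requires a careful case analysis with mass corrections (plateau insertion or symmetric segment removal) in the spirit of Steps\,4 and 5 of the proof of \cref{thm:existence}.
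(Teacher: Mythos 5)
Your lower bound $\tau^z\ge\tau$ is correct and takes a genuinely different, lighter route than the paper. By choosing $t=z(\psi(0))$ separately for each rearranged competitor $\psi$, you make the $\max$ in $E^t[\psi]=\int_\R\frac12|\psi'|^2+\max\{0,z(\psi)-t\}\psi\,\d y$ inactive, so the Modica--Mortola lower bound underlying \cref{thm:convolutionIdentity} plus Fenchel--Moreau for the admissible $\tau$ gives $F^c[\psi]\ge[g*r](t)+tw\ge\tau(w)$ directly. The paper obtains this inequality indirectly, by applying the necessary condition (\cref{thm:necessaryCondition}) to $\tau^z$ --- which requires the existence theory of \cref{thm:existencePhaseField} and \cref{thm:alternativeMinimization} --- and then filling the gaps in the set of attained slopes by convexity of the conjugates; your argument avoids all of that machinery for this direction. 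Only cosmetic care is needed ($t=z(\psi(0))=\infty$ forces $F^c[\psi]=\infty$, and the rearranged $\psi$ must be checked to be an admissible competitor in \cref{thm:convolutionIdentity}).

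The upper bound, however, has a genuine gap, which you essentially flag yourself. Your construction needs a profile of mass exactly $w$, which forces the identification $w=W_0(t^\ast)=-\partial_t[g*r](t^\ast)$; but (i) $[g*r]=[-\tau(-\cdot)]^\ast$ is merely convex, so at a kink $t^\ast$ the first-order condition only places $w$ in a subdifferential interval whose endpoints need not be realized as masses of Modica--Mortola profiles; (ii) $W_0(t)$ can be $+\infty$ even when $z^{-1}(t)<\infty$, since $\sqrt{\phi/(2(z(\phi)-t))}$ need not be integrable near $z^{-1}(t)$; (iii) the exclusion of $z^{-1}(t^\ast)=\infty$ by ``finiteness of $[g*r](t^\ast)$'' is unjustified, because $g(t^\ast)=\infty$ is compatible with $\int_{t^\ast}^\infty g(s)/\sqrt{s-t^\ast}\,\d s<\infty$; and (iv) the boundary term in your differentiation vanishes only if $z(z^{-1}(t))=t$, whereas in general only $z(z^{-1}(t))\le t$ holds. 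None of this is resolved by the closing promise of a ``careful case analysis''. The paper's Step\,2 shows how to bypass mass matching entirely: from a single near-optimal competitor for $E^t$ with $\psi(0)=z^{-1}(t)$ one produces, by plateau insertion at height $z^{-1}(t)$ (cheap precisely because $z(z^{-1}(t))\le t$) and by outer truncation, competitors of \emph{every} mass $W$, giving $\tau^z(W)-tW\le[g*r](t)+\delta$ for all $W$ simultaneously; taking the supremum over $W$ yields $[-\tau^z(-\cdot)]^\ast\le[-\tau(-\cdot)]^\ast$, and biconjugation (legitimate since $\tau^z$ is admissible by \cref{thm:tauProperties}) gives $\tau^z\le\tau$. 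I recommend replacing your upper-bound argument by this conjugate-side argument; your lower bound can stand as is.
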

\begin{proof}
The function $z$ is well-defined via \cref{thm:generalizedInverse}, and we set $c(\phi)=z(\phi)\phi$.

\emph{Step\,1.}
Let us abbreviate $E^t[\psi]=\int_\R\frac12|\psi'|^2+\max\{0,z(\psi)-t\}\psi\,\d y$ and $Z=\{t\geq0\,|\,z^{-1}(t)<\infty\}$,
then $Z=[T,\infty)$ or $Z=(T,\infty)$ for some $T\geq0$.
By \cref{thm:convolutionIdentity}, for all $t\in Z$ we have
\begin{align*}
[-\tau(-\cdot)]^\ast(t)
&=\left[g*r\right](t)\\
&=\inf\left\{E^t[\psi]\,\middle|\,\psi\in H^1(\R)\text{ nonnegative, even, and nonincreasing on }[0,\infty),\,\psi(0)=z^{-1}(t)\right\}\,.
\end{align*}
On the other hand, for all $t\in[0,\infty)\setminus Z$ we have $g(t)=\infty$ and thus $[-\tau(-\cdot)]^\ast=[g*r]=\infty$ on $[0,T)$ (not necessarily in $T$).

\emph{Step\,2.}
The previous step implies $[-\tau^z(-\cdot)]^\ast\leq[-\tau(-\cdot)]^\ast$.
Indeed, for given $t\in Z$ and $\delta>0$ let $\psi:\R\to[0,\infty)$ even and nonincreasing on $[0,\infty)$ with $\psi(0)=z^{-1}(t)$ and $E^t[\psi]\leq[-\tau(-\cdot)]^\ast(t)+\delta$.
Without loss of generality we may assume $\psi(y)<\psi(0)$ for all $y\neq0$.
Abbreviate $w=\int_\R\psi\,\d y$.
Now for any $W\geq w$ we can simply define $\tilde\psi(y)=\psi(\max\{0,|y|-\frac{W-w}{2\psi(0)}\})$, which satisfies $\int_\R\tilde\psi\,\d y=W$ and $F^c[\tilde\psi]-t\int_\R\tilde\psi\,\d y\leq E^t[\tilde\psi]=E^t[\psi]\leq[-\tau(-\cdot)]^\ast(t)+\delta$
so that we have $\tau^z(W)-tW\leq[-\tau(-\cdot)]^\ast(t)+\delta$.
On the other hand, for any $W<w$ there exists $R>0$ so that $\psi_R(y)=\psi(|y|+R)$ satisfies
\begin{equation*}
\int_\R\psi_R\,\d y=W\,,
\qquad\text{and then}\qquad
\tau^z(W)-tW
\leq F^c[\psi_R]-t\int_\R\psi_R\,\d y
=E^t[\psi_R]
\leq E^t[\psi]
\leq[-\tau(-\cdot)]^\ast(t)+\delta\,.
\end{equation*}
By the arbitrariness of $\delta>0$ we obtain $\tau^z(W)-tW\leq[-\tau(-\cdot)]^\ast(t)$ for all $W\geq0$, and taking the supremum over all $W\geq0$ the desired result follows.
Now for $t\in[0,T)$ the desired inequality is trivial due to $[-\tau(-\cdot)]^\ast(t)=\infty$, and for $t=T$ it holds by continuity.

\emph{Step\,3.}
Denote by $C\subset[0,\infty)$ the set of points $t$ such that there exists some $w\geq0$ at which $\tau^z$ is differentiable with $(\tau^z)'(w)=t$.
By \cref{thm:necessaryCondition}, for any $t\in C$ we have $[-\tau^z(-\cdot)]^\ast(t)=[g*r](t)=[-\tau(-\cdot)]^\ast(t)$.
Now consider a connected component of the complement of $C$ in $[0,\infty)$, whose interval endpoints we denote by $a$ and $b$.

If $a,b\in(0,\infty)$, then by continuity we have $[-\tau^z(-\cdot)]^\ast(t)=[-\tau(-\cdot)]^\ast(t)$ for $t=a$ and $t=b$,
and there exists some $w\geq0$ such that $a$ and $b$ are the right and left derivative, respectively, of $\tau^z$ in $w$.
It is a property of the Legendre--Fenchel conjugate that $[-\tau^z(-\cdot)]^\ast$ is linear on the interval $[a,b]$,
however, the linear interpolation between $[-\tau^z(-\cdot)]^\ast(a)$ and $[-\tau^z(-\cdot)]^\ast(b)$ is the largest possible convex function on $[a,b]$ with same function values in $a$ and $b$
so that necessarily $[-\tau(-\cdot)]^\ast\leq[-\tau^z(-\cdot)]^\ast$ on $[a,b]$.
Together with the previous step this implies $[-\tau(-\cdot)]^\ast=[-\tau^z(-\cdot)]^\ast$ on $[a,b]$.

If $a=0$, then if $a\in C$ or if also $b=0$ we can proceed exactly as before;
otherwise we have $[0,b)\subset[0,\infty)\setminus C$, which due to the admissibility of $\tau^z$ implies $[-\tau^z(-\cdot)]^\ast=\infty$ on $[0,b)$.
Therefore, the previous step again implies $[-\tau(-\cdot)]^\ast\leq[-\tau^z(-\cdot)]^\ast$ on $[a,b]$.

Finally, if $b=\infty$, then necessarily $[-\tau^z(-\cdot)]^\ast=0$ on $[a,\infty)$ and $(\tau^z)'(0)=a$.
By \cref{thm:aPrioriEstimateI} we thus have $\lim_{\phi\searrow0}z(\phi)=a$ and consequently $g=0$ on $[a,\infty)$ so that $[-\tau(-\cdot)]^\ast=g*r=0$ on $[a,\infty)$.
Summarizing, we obtain $[-\tau(-\cdot)]^\ast=[-\tau^z(-\cdot)]^\ast$ everywhere, and the desired equality $\tau^z=\tau$ immediately follows.
\end{proof}

\begin{remark}[Sufficient condition is not necessary]
By \cref{exm:urbanPlanningII}, the sufficient condition from \cref{thm:sufficientCondition} is not necessary.
\end{remark}

\paragraph{Acknowledgements.}
The author thanks Filippo Santambrogio for various discussions on the topic.
This work has been supported by by the Alfried Krupp Prize for Young University Teachers awarded by the Alfried Krupp von Bohlen und Halbach-Stiftung.

\bibliographystyle{plain}
\bibliography{urbanPlanningPhaseField}

\end{document}